\theoremstyle{plain}
\newtheorem{theorem}{Theorem}[section]
\newtheorem{definition}{Definition}[subsection]
\newtheorem{lemma}[theorem]{Lemma}
\newtheorem{proposition}[theorem]{Proposition}
\newtheorem{corollary}[theorem]{Corollary}
\theoremstyle{definition}
\theoremstyle{remark}
\title{\textbf{ Uniformly bounded weight modules for Map extended Special and Map extended Hamiltonian Lie algebras}}
\author{ Pradeep Bisht and Punita Batra\thanks{Corresponding author} \\ \small Harish-Chandra Research Institute, Prayagraj. A CI of Homi Bhabha National Institute,\\ \small Chhatnag Road, Jhunsi, Prayagraj(Allahabad) 211019 India  \\ \small \texttt{pradeepbisht@.hri.res.in} and \texttt{batra@hri.res.in}} 
\date{}
\begin{document}

\maketitle

\begin{abstract}
This paper explores the irreducible uniformly bounded weight modules of map extended Special Lie algebra and map extended Hamiltonian Lie algebra under some assumption on the action of $A_{N}$. They turn out to be irreducible modules for the underlying Special and Hamiltonian Lie algebras(respectively) and the action of $A_{N}\otimes B$ is shown to be associative except for the zero-degree elements which act as scalars. It is also proved that such modules are also irreducible for the underlying extended Special Lie algebra (respectively, extended Hamiltonian Lie algebra).

\vspace{1em}
\noindent\textbf{Mathematics Subject Classification (2020):} Primary 17B66; Secondary 17B68.

\end{abstract}

\tableofcontents
\newpage

\section{Introduction}
Infinite-dimensional Lie algebras and their representation theory play a crucial role in many areas of Mathematics and Physics. Infinite-dimensional Lie algebras of Cartan type, viz. $ \mathcal{W}_{N}, \mathcal{S}_{N}, \mathcal{H}_{2m}$ and $\mathcal{K}_{2m+1}$ provide an important class of infinite-dimensional Lie algebras. The Virasoro algebra(denoted by $Vir$) is a key infinite-dimensional Lie algebra, which is the universal central extension of rank-1 Witt algebra $\mathcal{W}_{1}$ and plays an important role in theoretical physics, namely in string theory and two-dimensional conformal field theory. One of the important problems is to classify all irreducible Harish-Chandra modules, which are irreducible weight modules with finite weight multiplicity, over various important Lie algebras. A complete classification of irreducible Harish-Chandra modules over the Virasoro Lie algebra was given by Olivier Mathieu in 1992 [13]. The irreducible Harish-Chandra modules for simple finite-dimensional Lie algebras were classified by Olivier Mathieu in [14]. For the classical Lie superalgebras, the Harish-chandra modules were classified by Maria Gorelik and Dimitar Grantcharov(see [19]). 

\par Let $A_{N}=\mathbb{C}[t_{1}^{\pm1},\ldots,t_{N}^{\pm 1}]$ be the algebra of Laurent polynomials in $N$ commuting variables $t_{1},\ldots,t_{N}$ and $\mathcal{W}_{N}=\text{Der}A_{N}$ be the Lie algebra of the derivations of $A_{N}$.  In [15], S. Eswara Rao studied the irreducibility conditions of Shen-Larsson modules for $\mathcal{W}_{N}$ and later showed in [17] that these modules exhaust all the irreducible modules for the extended Witt algebras $\mathcal{W}_{N}\ltimes A_{N}$ with finite dimensional weight spaces. Furthermore, Y. Billig extended this result to the classification of indecomposable modules for the category $\mathcal{J}$ for the Witt algebra $\mathcal{W}_{N}$(see [1]), where the objects in category $\mathcal{J}$ are modules that admit compatible actions of the both the Lie algebra $\mathcal{W}_{N}$ and the commutative algebra $A_{N}$. In [3], Y. Billig and V. Futorny obtained a complete classification of the Harish-Chandra Modules for the Witt algebra $\mathcal{W}_{N}$. J. Talboom studied the Shen-Larsson modules to the subalgebra $\mathcal{S}_{N}$ and they were shown to be irreducible under similar conditions; see [29]. In [2], Y. Billig and J. Talboom classified irreducible and indecomposable modules in the category $\mathcal{J}$ modules for the special algebra $\mathcal{S}_{N}$. In [30], J. Talboom  classified the irreducible and indecomposable modules in the category $\mathcal{J} $-modules for the Hamiltonian algebra $\mathcal{\tilde{H}}_{N}$.

\par Given a Lie algebra $\mathfrak{g}$ over $ \mathbb{C}$ and $B$ a finitely generated commutative associative unital algebra over $\mathbb{C}$, then $\mathcal{M}(\mathfrak{g})=\mathfrak{g}\otimes B$ is a Lie algebra with brackets:
\begin{equation*}
    [x\otimes b_{1}, y\otimes b_{2}]= [x,y]\otimes b_{1}b_{2},
\end{equation*}
where $x,y\in \mathfrak{g} $ and $b_{1}, b_{2} \in B$. The Lie algebra $\mathcal{M}(\mathfrak{g})$ is called the map Lie algebra associated with $\mathfrak{g}$ and $B$. Over the last decade, the study of map Lie algebras and their representation theory has gained the attention of many researchers due to their importance in Mathematics and Mathematical Physics. For example, if $\mathfrak{g}$ is a finite-dimensional simple Lie algebra, and $B=\mathbb{C}[t^{\pm 1}]$, then the universal central extension of $\mathfrak{g}\otimes B$ is the well known affine Kac-Moody algebras. If $\mathfrak{g}$ is a finite-dimensional simple Lie algebra and $B=\mathbb{C}[t]$, then the Lie algebra $\mathfrak{g}\otimes \mathbb{C}[t]$ is an important Lie algebra known as the Current Lie algebra(see [10]). For a finite-dimensional simple Lie algebra $\mathfrak{g}$ and $B=\mathbb{C}[t_{1}^{\pm1},\ldots, t_{N}^{\pm 1}]$($N \geq 2$), the iterated loop algebras $\mathfrak{g}\otimes B$ have the universal central extension known as the Toroidal algebras(see [12]). Let M be a positive integer then $B= \mathbb{C}[t]/(t^{M+1})$, where $(t^{M+1})$ is the principal ideal of $\mathbb{C}[t]$ generated by the element $t^{M+1}$, the associated map algebras $\mathfrak{g}\otimes \mathbb{C}[t]/(t^{M+1})$ are known as the truncated current Lie algebras(see [23]).

\par Irreducible finite-dimensional representations of equivariant map algebras were investigated by E. Neher, A. Savage, and P. Senesi in [26]. Irreducible Harish-Chandra modules for the map and affine Lie superalgebras were classied by L. Calixto, V. Futorny and H. Rocha, as detailed in [6]. Irreducible quasifinite modules over map Virasoro algebras were studied and classified by A. Savage(see [21]). Subsequently, P. Chakraborty and P. Batra constructed a class of irreducible weight modules for these Lie algebras with infinite dimensional weight spaces in [7]. In [20], X. Guo, R. Lu, and K. Zhao gave a complete classification of irreducible Harish-Chandra modules over the loop-Virasoro algebra.   In [9], P. Chakraborty and S. Eswara Rao studied irreducible Harish-Chandra modules of Map extended Witt algebra $ (\mathcal{W}_{N}\ltimes A_{N})\otimes B$ with the assumption that the action of $A_{N}\otimes B$ is associative and the operator $t^{0}\otimes 1$ acts non-trivially and hence by a nonzero scalar(being a central element in $(\mathcal{W}_{N}\ltimes A_{N})\otimes B$). They have shown that any such module is an evaluation module at a single point. This result was later extended to a complete classification of Harish-Chandra modules for the map Witt algebras $\mathcal{W}_{N}\otimes B$ in [27], these modules turn out to be evaluation modules at a single point.

\par  The classification of irreducible integrable modules for loop Toroidal Lie algebras with finite weight multiplicity was established by P. Chakraborty and P. Batra in [5](see also [11]). In [16], S. Eswara Rao studied the irreducible highest weight modules for loop affine Virasoro algebras and  obtained the necessary and sufficient conditions for irreducible highest weight modules to have finite dimensional weight spaces  and S. Eswara Rao, S.S. Sharma and S.  Mukherjee obtained the complete classification for irreducible integrable module of loop affine Virasoro algebra with finite weight multiplicity in [18]. Very recently, in [4](see also [12]), P. Bisht and P. Batra obtained a complete classification of irreducible, integrable modules of map full Toroidal Lie algebras with finite weight multiplicity.
\par The aim of this paper is to study the irreducible uniformly bounded Harish-Chandra modules for the map extended Special Lie algebra and map extended Hamiltonian Lie algebra with the assumption that the action of some nonzero degree element $t^{\textbf{r}}=t^{\textbf{r}}\otimes 1$(for some $\textbf{r}\neq \textbf{0}$) and $t^{\textbf{0}}=t^{\textbf{0}}\otimes 1$ is nontrivial.  The structure of this paper is as follows. \par
In Section 2, we will develop some basic terminology and recall a key result concerning finite-dimensional simple Lie algebras in Subsection 2.1. In Subsection 2.2, the definitions of Witt algebra, Special algebra and Hamiltonian algebra are recalled and some previously obtained results on extended Witt algebras and map extended Witt algebras are stated in Subsection 2.3. 

\par
In the section 3, we present a classification of irreducible uniformly bounded modules for map extended Special algebras $(\mathcal{S}_{N}\ltimes A_{N})\otimes B$ with the assumptions stated above.   In Subsection 3.1, we prove that the action of $A_{N}\otimes B$ on any such module is quasi-associative. Infact, we show that the elements $t^{\textbf{r}}$ acts injectively on module and there exists an algebra homomorphism $\psi: B\to \mathbb{C}$ with $\psi(1)=1$ and a linear map $\phi: B \to \mathbb{C}$ with $\phi(1)=1$ such that $t^{\textbf{r}}\otimes b$ acts as $\psi(b)t^{\textbf{r}}$ and $t^{\textbf{0}}\otimes b$ acts as $ \phi(b)t^{\textbf{0}}$ on $V$ . In Subsection 3.2, we show that $D(u,\textbf{r})b$ acts as $\psi(b)D(u,\textbf{r})$ for every $\textbf{r}\in \mathbb{Z}^{N}\setminus\{\textbf{0}\}$ and $b\in B$, while $D(u,\textbf{0})b$ acts as a  scalar $ f_{\lambda}(u,b)$ on each weight space $V_{\lambda}$ of $V$, where $f_{\lambda}: \mathbb{C}^{N}\times B \to \mathbb{C}$ is a linear map with $f_{\lambda+\sum_{i=1}^{N}r_{i}\delta_{i}}(u,b)=f_{\lambda}(u,b)+(u,\textbf{r})\psi(b)$ for each $\textbf{r}=(r_{i})_{i=1}^{N}\in \mathbb{Z}^{N}$. Finally, we show that $V$ is an irreducible module for the underlying  extended Special algebra $\mathcal{S}_{N}\ltimes A_{N}$  with the action of $A_{N}$ being quasi-associative on $V$. Therefore, $V$ is therefore an irreducible Jet module for the Lie algebra $\mathcal{S}_{N}$. \par 

Section 4 runs parallel to Section 3, we begin with stating a result due to Talboom [30]. Let $V$ be an irreducible uniformly bounded Harish-Chandra module for the Lie algebra $(H_{N}\ltimes A_{N})\otimes B$, with the same assumption as made for $(S_{N}\ltimes A_{N})\otimes B$. In Section 4.1, we show that the action of $A_{N}\otimes B$ is quasi-associative on $V$. The action of $t^{\textbf{r}}b$ in terms of the action of $t^{\textbf{r}}$, similarly as in Subsection 3.1.
The action of $D(u,\textbf{r})b$ $(\textbf{r}\neq \textbf{0})$ is expressed in terms of $D(u,\textbf{r})$ $(\textbf{r}\neq \textbf{0})$, where as $D(u,\textbf{0})b$ acts as scalar $f_{\lambda}(u,\textbf{0})b$ on each weight space $V_{\lambda}$, where $f_{\lambda}:\mathbb{C}^{N}\times B \to \mathbb{C}$ is a bilinear map. Finally, it is shown that $V$ is, in fact, an irreducible module over the underlying extended Hamiltonian algebra $\mathcal{\tilde{H}}_{N}\ltimes A_{N}$, with the quasi-associative action of $A_{N}$. Hence, $V$ is an irreducible Jet module for the Hamiltonian Lie algebra $\tilde{H}_{N}$.
\section{Preliminaries}
\subsection{ Basic Terminology}
Throughout this paper, we denote the set of non negative integers, set of integers, set of positive integers, set of negative integers and the field of complex numbers by $\mathbb{N}$, $ \mathbb{Z}$, $\mathbb{Z}_{+}$, $\mathbb{Z}_{-}$, and $\mathbb{C}$(respectively). All the vector spaces and algebras are over $\mathbb{C}$. Unless otherwise mentioned, the tensor products are considered over $\mathbb{C}$. We recall the following definitions, which will be used in the sequel.
\begin{definition} Let $\mathfrak{g}$ be a Lie algebra with a Cartan subalgebra $\mathfrak{h}$, then a $\mathfrak{g}$-module V is called $\mathfrak{h}$-weight module if it admits a weight space decomposition:
\begin{equation*}
    V = \bigoplus_{\lambda \in \mathfrak{h}^{*}}V_{\lambda},
\end{equation*}
where $V_{\lambda}= \{v\in V \mid h\cdot  v = \lambda(h)v, \; \forall \; h\in \mathfrak{h}\}$. We call $\lambda\in \mathfrak{h}^{*}$ a weight of the module V if $V_{\lambda}\neq 0 $.
\end{definition} 
For a weight module $V$, the set of weights of  $V$ is denoted by $P(V)$.
\begin{definition}
A $\mathfrak{h}$-weight module $V$ for $\mathfrak{g}$ is said to be a uniformly bounded Harish-Chandra Module if there exists a positive integer $M$  such that $\text{dim}V_{\lambda} \leq M$, for all $\lambda \in \mathfrak{h}^{*}$.   
\end{definition}
Let $B$ be a finitely generated commutative associative unital algebra over $\mathbb{C}$.
\begin{definition}
    A module V for $\mathfrak{g}\otimes B$ is called a single point evaluation module if there exists an algebra homomorphism $\psi: B \to C $ such that $x\otimes b\cdot v = \psi(b)x\otimes 1$, where $1$ is the unity of $B$, $x\in \mathfrak{g}$, $ v\in V$ and $b\in B$.
\end{definition} 
Notice that an irreducible single point evaluation $\mathfrak{g}\otimes B$-module is an irreducible $\mathfrak{g}$-module. \\ 
For any $N$-tuples $\textbf{n}=(n_{1},\ldots, n_{N})$, $\textbf{m}=(m_{1},m_{2},\cdots, m_{N})$  of integers and $1\leq i\neq j\leq N$, we denote by $ \text{det}\begin{pmatrix}
    \textbf{m} \\
    \textbf{n}
\end{pmatrix}_{i,j}$ the determinant of the matrix $\begin{bmatrix}
    m_{i} & m_{j} \\
    n_{i} & n_{j}
\end{bmatrix}$.
   \begin{lemma}[\textbf{[17]}]
       \begin{enumerate} \item Let $\mathfrak{g}\subseteq \mathfrak{gl}(V)$ ( $V$ is finite-dimensional) be a nonzero Lie algebra admitting an irreducible representation on $V$. Then $\mathfrak{g}$ is reductive and its center is at most one-dimensional.
       \item Let $\mathfrak{g}$ be a finite-dimensional reductive Lie algebra. Then $\mathfrak{g}=Z(\mathfrak{g})\oplus [\mathfrak{g},\mathfrak{g}]. $\end{enumerate} \end{lemma}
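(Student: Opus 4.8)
The plan is to prove the two parts separately, with part (1) being the substantive one; throughout we exploit that $\mathbb{C}$ is algebraically closed of characteristic zero, so that Lie's theorem applies. For part (1), let $\mathfrak{r}$ denote the radical (maximal solvable ideal) of $\mathfrak{g}$. The central goal is to show that $\mathfrak{r}$ acts on $V$ by scalar operators. First I would invoke Lie's theorem: since $\mathfrak{r}$ is solvable and $V$ is finite-dimensional, there is a common eigenvector, hence a weight $\lambda \in \mathfrak{r}^{*}$ with $V_{\lambda} = \{v \in V : x\cdot v = \lambda(x)v \text{ for all } x \in \mathfrak{r}\} \neq 0$. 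The key step, and the main obstacle, is the invariance lemma $\lambda([\mathfrak{g},\mathfrak{r}]) = 0$, from which the $\mathfrak{g}$-stability of $V_{\lambda}$ follows: for $y \in \mathfrak{g}$, $x \in \mathfrak{r}$ and $v \in V_{\lambda}$ one computes $x\cdot(y\cdot v) = \lambda(x)\, y\cdot v + \lambda([x,y])\, v$ (using that $[x,y]\in\mathfrak{r}$), so $y\cdot v \in V_{\lambda}$ exactly when $\lambda$ annihilates $[\mathfrak{g},\mathfrak{r}]$.

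To establish $\lambda([\mathfrak{g},\mathfrak{r}]) = 0$, I would fix $y \in \mathfrak{g}$ and a nonzero $v \in V_{\lambda}$ and form the subspace $W = \mathrm{span}\{v, y\cdot v, y^{2}\cdot v, \ldots\}$. A short induction shows that $W$ is stable under $\mathfrak{r}$ and that every $x \in \mathfrak{r}$ acts on $W$ by an upper-triangular matrix with all diagonal entries equal to $\lambda(x)$, whence $\mathrm{tr}_{W}(x) = (\dim W)\,\lambda(x)$. Applying this to a commutator $x = [y,z]$ with $z \in \mathfrak{r}$ (so $x \in \mathfrak{r}$ and both $y,z$ preserve $W$) gives $\mathrm{tr}_{W}([y,z]) = 0$, while simultaneously $\mathrm{tr}_{W}([y,z]) = (\dim W)\,\lambda([y,z])$; since the characteristic is zero and $\dim W \geq 1$, this forces $\lambda([y,z]) = 0$. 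As $[\mathfrak{g},\mathfrak{r}]$ is spanned by such commutators, the invariance lemma follows, and irreducibility of $V$ together with the $\mathfrak{g}$-stability of $V_{\lambda}$ yields $V_{\lambda} = V$; that is, $\mathfrak{r}$ acts by the scalars $\lambda(x)$.

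Two consequences finish part (1). Every element of $[\mathfrak{g},\mathfrak{r}]$ is a sum of commutators, hence acts on $V$ with trace zero, yet it also acts by the scalar $\lambda(\cdot)$; comparing, $[\mathfrak{g},\mathfrak{r}]$ acts as $0$. Because the representation $\mathfrak{g}\hookrightarrow \mathfrak{gl}(V)$ is faithful, this gives $[\mathfrak{g},\mathfrak{r}] = 0$, so $\mathfrak{r} \subseteq Z(\mathfrak{g})$; the reverse inclusion $Z(\mathfrak{g}) \subseteq \mathfrak{r}$ always holds, so $\mathfrak{r} = Z(\mathfrak{g})$, which is precisely reductivity. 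Moreover $\mathfrak{r}$ acts faithfully by scalars, so it embeds into $\mathbb{C}\,\mathrm{Id}_{V}$, whence $\dim Z(\mathfrak{g}) = \dim \mathfrak{r} \leq 1$.

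For part (2), I would apply Levi's theorem to write $\mathfrak{g} = \mathfrak{s} \ltimes \mathfrak{r}$ with $\mathfrak{s}$ semisimple and $\mathfrak{r} = Z(\mathfrak{g})$ by reductivity. Since $\mathfrak{r}$ is central, $[\mathfrak{s}, \mathfrak{r}] \subseteq [\mathfrak{g}, Z(\mathfrak{g})] = 0$, so the semidirect product is in fact a direct sum of ideals, $\mathfrak{g} = Z(\mathfrak{g}) \oplus \mathfrak{s}$. Finally, semisimplicity gives $[\mathfrak{s},\mathfrak{s}] = \mathfrak{s}$, and as every bracket involving the central factor vanishes, $[\mathfrak{g},\mathfrak{g}] = [\mathfrak{s},\mathfrak{s}] = \mathfrak{s}$; substituting yields $\mathfrak{g} = Z(\mathfrak{g}) \oplus [\mathfrak{g},\mathfrak{g}]$. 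I expect the trace-and-triangularization argument behind the invariance lemma to be the only genuinely delicate point, with everything else becoming formal once $\mathfrak{r}$ is known to act by scalars.
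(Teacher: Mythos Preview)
Your proof is correct and follows the classical route via Lie's theorem and the invariance lemma (essentially the argument one finds in standard references such as Humphreys). Note, however, that the paper itself does not prove this lemma: it is quoted verbatim from reference~[17] and used as a black box, so there is no ``paper's own proof'' to compare against. Your argument supplies exactly the kind of self-contained justification the paper omits, and nothing in it is problematic; the invariance step $\lambda([\mathfrak{g},\mathfrak{r}])=0$ via the trace trick on the cyclic $y$-subspace is the standard delicate point and you have handled it correctly.
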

  
   \subsection{ Witt algbera, Special algebra, Hamiltonian algebra}
Let $N$ be a positive integer and  $A_{N}= \mathbb{C}[t_{1}^{\pm{1}}, t_{2}^{\pm{1}},\ldots, t_{N}^{\pm{1}}]$ be the commutative algebra of Laurent polynomials in $N$ variables over $\mathbb{C}$. We now recall the definition of  the  Witt algebras ($\mathcal{W}_{N}$), the Special algebras ($\mathcal{S}_{N}$) and the Hamiltonian algebras $\mathcal{H}_{N}$. For any $N$-tuple of integers $\textbf{m}=(m_{1}, m_{2},..., m_{N})$, we denote the corresponding monomial $t_{1}^{m_{1}}t_{2}^{m_{2}}\cdots t_{N}^{m_{N}}$ by $t^{\textbf{m}}$. If $d_{i}$(for $1\leq i \leq N$) denotes the $i\text{-th}$ degree derivation of $A_{N}$, that is, $ d_{i}\equiv t_{i}\dfrac{d}{dt_{i}}$. Denote by $t^{\textbf{r}}d_{a}$ the derivation $t_{1}^{r_{1}}t_{2}^{r_{2}}\ldots t_{N}^{r_{N}}d_{a}$. For $u\in \mathbb{C}^{N}$ and $\textbf{r}\in \mathbb{Z}^{N}$, let $D(u,\textbf{r})$ denote the derivation $\sum_{i=1}^{N}u_{i}t^{\textbf{r}}d_{i}$. Then we have the following definitions:
\begin{definition}
    The Lie algebra of derivations of $A_{N}$ is called the Witt algebra and is denoted by $\mathcal{W}_{N}$. The bracket structure on $\mathcal{W}_{N}= \text{span}_{\mathbb{C}}\{ D(u,\textbf{r}) \mid u \in \mathbb{C}^{N}, \textbf{r}\in \mathbb{Z}^{N} \}$ is given as follows:
  \begin{equation}
    [D(u,\textbf{r}), D(v,\textbf{s})] = D(w, \textbf{r}+ \textbf{s}),
\end{equation}
where $u,v \in \mathbb{C}^{N}, \textbf{r},\textbf{s} \in \mathbb{Z}^{N}$ and $w= (u|\textbf{s})v-(v|\textbf{r})u$ If $\mathfrak{h}=\text{span}_{\mathbb{C}}\{ d_{i} \mid 1\leq i \leq N\}$, then $\mathfrak{h}$ is a Cartan subalgebra of $\mathcal{W}_{N}$.

 This Lie algebra is naturally identified with the Lie algebra of vector fields on N-dimensional torus. 
\end{definition}
For any $1\leq i\leq N$, we define $\delta_{i}\in \mathfrak{h}^{*}$ by $\delta_{i}(d_{j})=\delta_{ij}$ for every $1\leq j \leq N$, where $\delta_{ij}$ denotes the Kronecker delta. These linear functionals will be used later.

\begin{definition}
  One of the important subalgebras of $\mathcal{W}_{N}$ is the Lie algebra of divergence zero vector fields on N-dimensional torus, known as the Special algebra $\mathcal{S}_{N}$. It is well known that $\mathcal{S}_{N}= \text{span}_{\mathbb{C}}\{D(u,\textbf{r}) \mid u\in \mathbb{C}^{N}, \textbf{r} \in \mathbb{Z}^{N}, (u|\textbf{r})=0\}$. For $ \textbf{r}\in \mathbb{Z}^{N}$, $1\leq i \leq N$ and $1\leq a\neq b\leq N $, let $d_{ab}(\textbf{r})= r_{b}t^{\textbf{r}}d_{a}-r_{a}t^{\textbf{r}}d_{b}$, then the elements $d_{ab}(\textbf{r})$ and $d_{i}$ span $\mathcal{S}_{N}$ as a vector space.
\end{definition}
For $N=2m$($m\in \mathbb{Z}_{+}$) and $\textbf{r}=(r_{1},\cdots,r_{m}, r_{m+1},\cdots r_{2m})\in \mathbb{Z}^{N}$, denote by $\overline{\textbf{r}}$ the element $(r_{m+1},\ldots,r_{2m},-r_{1},\ldots,-r_{m})\in \mathbb{Z}^{N}$.
\begin{definition} Next important subalgebra of $W_{N}$(where $N=2m$ ) is the Lie algebra of Hamiltonian vector fields $\mathcal{H}_{N}$. If $h_{\textbf{r}}=D(\overline{\textbf{r}}, \textbf{r})$, then it is known that $ \mathcal{H}_{N}= \text{span}_{\mathbb{C}}\{h_{\textbf{r}} \mid \textbf{r} \in \mathbb{Z}^{N}  \} $. We further add to $\mathcal{H}_{N}$ the degree derivations i.e. $d_{i}\text{'s}$ to obtain $\tilde{H}_{N} = \mathcal{H}_{N}\rtimes \mathfrak{h}$, then $\mathfrak{h}$ is a Cartan subalgebra of $\tilde {\mathcal{H}}_{N}$.
    
\end{definition}

\subsection{ Map extended Witt algebra and previously obtained results}
As the Lie algebra $\mathcal{W}_{N}$ acts naturally on $A_{N}$, one can define the Lie algebra $\mathcal{W}_{N}\ltimes A_{N}$, we call this Lie algebra the extended Witt algebra. S. Eswara Rao investigated the irreducible modules of $\mathcal{W}_{N}\ltimes A_{N}$ with finite-dimensional weight spaces. He proved that the Larsson-Shen modules exhaust all such modules in terms of the following theorem:
\begin{theorem}{[\textbf{[17]},Theorem 2.9]}
    Let $V$ be an irreducible module of $\mathcal{W}_{N}\ltimes A_{N}$ with finite-dimensional weight spaces with respect to the Cartan subalgebra $\mathfrak{h}$ together with the following assumptions:
    \begin{enumerate}
        \item V admits an $A_{N}$-module structure and the Lie bracket of $A_{N}$ is compatible with the underlying associative algebra structure.
        \item $t^{0}\cdot v = v$ for every $v$ in $V$.
    \end{enumerate}
    Then $V \cong F^{\alpha}(\psi,b)$.
\end{theorem}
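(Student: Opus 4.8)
The plan is to reconstruct $V$ as a Shen--Larsson tensor-field module. The natural starting point is the $A_N$-module structure. Since assumption (2) forces $t^{\textbf{0}}=1$ to act as the identity and assumption (1) makes the $A_N$-action associative, each monomial $t^{\textbf{r}}$ acts as a bijection $V_\lambda \to V_{\lambda+\sum_i r_i\delta_i}$ with inverse $t^{-\textbf{r}}$ (the weight shift is computed from $d_j(t^{\textbf{r}}v)=(\lambda(d_j)+r_j)t^{\textbf{r}}v$, using that $\mathcal{W}_N$ acts on $A_N$ by derivations). Consequently all nonzero weight spaces are isomorphic, hence of one common finite dimension, and by irreducibility the support $P(V)$ is a single coset $\alpha+\mathbb{Z}^N$ for some $\alpha\in\mathbb{C}^N$. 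I would then fix a base weight $\lambda$, set $W:=V_\lambda$, and use the invertible operators $t^{\textbf{r}}$ to identify $V\cong A_N\otimes W$ as an $A_N$-module.

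Next I would extract a finite-dimensional Lie algebra acting on the fixed space $W$. For each generator $t^{\textbf{r}}d_a$ of $\mathcal{W}_N$, composing with multiplication by $t^{-\textbf{r}}\in A_N$ produces an endomorphism $t^{-\textbf{r}}(t^{\textbf{r}}d_a)$ of $W$ (it maps $V_\lambda \to V_{\lambda+\textbf{r}} \to V_\lambda$). Using the semidirect-product relations of $\mathcal{W}_N\ltimes A_N$ together with associativity of the $A_N$-action, I would show that this endomorphism has the form $\lambda(d_a)\,\mathrm{Id}_W+\sum_b r_b\,\rho(E_{ba})$, so that reading off the coefficient of each $r_b$ defines operators $\rho(E_{ba})$ on $W$ that are independent of the representative $\textbf{r}$ and satisfy the $\mathfrak{gl}_N$ bracket relations. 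This endows $W$ with a $\mathfrak{gl}_N$-module structure.

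To identify $W$ I would invoke the structure result quoted above (a nonzero Lie subalgebra of $\mathfrak{gl}(W)$ acting irreducibly on the finite-dimensional space $W$ is reductive with at most one-dimensional centre). Since $V$ is irreducible and $W$ is finite-dimensional, the subalgebra of $\mathfrak{gl}(W)$ generated by the operators $\rho(E_{ba})$ acts irreducibly, so it is reductive; hence $W$ is an irreducible $\mathfrak{gl}_N$-module, determined by an irreducible $\mathfrak{sl}_N$-module $\psi$ together with the scalar $b$ recording the central character (the eigenvalue by which $\sum_i E_{ii}$ acts). This produces precisely the data $(\psi,b)$, with $\alpha$ coming from the support coset.

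Finally I would reconstruct the full action and match it to the Shen--Larsson formula: writing a general $D(u,\textbf{r})$ and moving $A_N$-factors past $\mathcal{W}_N$-factors via the commutation relations, I would express $D(u,\textbf{r})\cdot(t^{\textbf{m}}\otimes w)$ entirely in terms of $\alpha$, the weight, and the $\mathfrak{gl}_N$-action $\psi$ on $w$, and check that it coincides with the defining action on $F^\alpha(\psi,b)$; irreducibility of $V$ then forces $\psi$ to be irreducible and yields $V\cong F^\alpha(\psi,b)$. I expect the middle step to be the main obstacle: one must verify that the a priori $\textbf{r}$-dependent operators $t^{-\textbf{r}}(t^{\textbf{r}}d_a)$ genuinely glue into a single representative-independent $\mathfrak{gl}_N$-action, that the would-be non-associative correction terms cancel, and that the central scalar $b$ is correctly and consistently pinned down across all weight spaces. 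Controlling these compatibilities is the technical heart of the argument.
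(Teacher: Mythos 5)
The paper does not prove this statement: it is quoted verbatim from [17, Theorem 2.9] as background, so there is no in-paper proof to compare against. Measured against the known proof in [17] (and against the parallel arguments this paper runs for $\mathcal{S}_{N}$ and $\tilde{\mathcal{H}}_{N}$ in Sections 3--4), your architecture is the standard and correct one: invertibility of the $t^{\textbf{r}}$ from $t^{\textbf{0}}=1$ and associativity, the identification $V\cong A_{N}\otimes W$ with $W=V_{\lambda}$, the degree-zero operators $t^{-\textbf{r}}(t^{\textbf{r}}d_{a})$ on $W$, extraction of a $\mathfrak{gl}_{N}$-action, and reconstruction of the Shen--Larsson formula.

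There is, however, one genuine gap, and it sits exactly where you locate the ``technical heart.'' You assert that the affine form $t^{-\textbf{r}}(t^{\textbf{r}}d_{a})=\lambda(d_{a})\,\mathrm{Id}_{W}+\sum_{b}r_{b}\,\rho(E_{ba})$ follows from ``the semidirect-product relations together with associativity of the $A_{N}$-action.'' It does not: those relations alone only yield cocycle-type identities relating the operators for different $\textbf{r}$, and Billig's category $\mathcal{J}$ contains (indecomposable, non-irreducible) modules satisfying both hypotheses for which the dependence on $\textbf{r}$ is polynomial of degree greater than one, the higher coefficients realizing the action of a nilpotent ideal. So the step as stated would fail. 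The affine form is precisely where irreducibility must be injected, via the reductivity result you quote (Lemma 2.1 of this paper): the finite-dimensional Lie algebra generated by all the degree-zero operators acts irreducibly on $W$, hence is reductive, and the ideal carrying the would-be higher-order corrections is solvable, hence acts by scalars and is then forced to vanish. In your outline the reductivity lemma appears only afterwards, to identify $W$ as an irreducible $\mathfrak{gl}_{N}$-module once the affine form is already in hand; it needs to be moved upstream and used to establish that form. This is exactly the role it plays in [17] and in the analogous Lemma 3.12 of the present paper, where $\varphi(\tilde{D})$ is shown to be a solvable ideal of the reductive algebra $\varphi(D_{2})$ and therefore to act by scalars. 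The remaining steps of your proposal (independence of the representative $\textbf{r}$, the $\mathfrak{gl}_{N}$ brackets, pinning down the central scalar, and matching with $F^{\alpha}(\psi,b)$) are then routine.
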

Let $B $ be a  finitely generated commutative associative unital algebra over $ \mathbb{C}$. P. Chakraborty and S. Eswara Rao studied the irreducible weight modules for the map extended Witt algebra $ (\mathcal{W}_{N}\ltimes A_{N})\otimes B$ having finite dimensional weight spaces under the assumption that the action of $A_{N}\otimes B$ is associative and $t^{0}\otimes 1$ acts by as the scalar 1 on $V$. The following theorem was proved by them. 
\begin{theorem}{[\textbf{[9]},Theorem 4.5]}
    Let V be an irreducible $ \mathfrak{h}$-weight module for $(\mathcal{W}_{N}\ltimes A_{N})\otimes B$ with finite-dimensional weight spaces. Also, let the action of $A_{N}\otimes B$ on V be associative and $1\cdot v =v$ for all $v$ in $V$. Then there exists an algebra homomorphism $\psi: B \to \mathbb{C}$ such that $x\otimes b\cdot v = \psi(b)x\otimes 1\cdot v$, for all $x\in \mathcal{W}_{N}\ltimes A_{N}$, $b\in B$ and $v\in V$.
\end{theorem}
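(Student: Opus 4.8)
The plan is to exploit the $\mathbb{Z}^{N}$-grading of $L:=(\mathcal{W}_{N}\ltimes A_{N})\otimes B$, in which $B$ sits in degree $\textbf{0}$, together with Schur's lemma. First I would record that, since $D(u,\textbf{r})$ acts on the constant $t^{\textbf{0}}=1$ by $(u|\textbf{0})\,t^{\textbf{r}}=0$, one has $[D(u,\textbf{r})\otimes b_{1},\,t^{\textbf{0}}\otimes b_{2}]=[D(u,\textbf{r}),t^{\textbf{0}}]\otimes b_{1}b_{2}=0$, and likewise $[t^{\textbf{s}}\otimes b_{1},t^{\textbf{0}}\otimes b_{2}]=0$; hence $t^{\textbf{0}}\otimes B$ is central in $L$. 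Because $V$ is irreducible, all its weights lie in a single coset $\lambda_{0}+\sum_{i}\mathbb{Z}\delta_{i}$, and with finite-dimensional weight spaces this makes $\dim V$ countable, so Dixmier's form of Schur's lemma gives $\mathrm{End}_{L}(V)=\mathbb{C}\,\mathrm{id}$. Each central operator $\rho(t^{\textbf{0}}\otimes b)$ is therefore a scalar $\psi(b)$; associativity of the $A_{N}\otimes B$-action forces $\psi(b_{1}b_{2})=\psi(b_{1})\psi(b_{2})$ and $1\cdot v=v$ gives $\psi(1)=1$, so $\psi\colon B\to\mathbb{C}$ is an algebra homomorphism. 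The $A_{N}$-part is then immediate: since $t^{\textbf{r}}\otimes b=(t^{\textbf{r}}\otimes 1)(t^{\textbf{0}}\otimes b)$ in the associative algebra $A_{N}\otimes B$, associativity yields $\rho(t^{\textbf{r}}\otimes b)=\psi(b)\,\rho(t^{\textbf{r}}\otimes 1)$.

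Second, I would reduce the remaining (Witt) claim to an ideal acting trivially. Put $K=\ker\psi$, a codimension-one ideal of $B$, and $I=(\mathcal{W}_{N}\ltimes A_{N})\otimes K$, an ideal of $L$. Since $\psi$ is an algebra homomorphism, $x\otimes b\mapsto\psi(b)x$ is a Lie homomorphism $L\to\mathcal{W}_{N}\ltimes A_{N}$ with kernel $I$, so it suffices to prove $\rho(I)=0$; the theorem follows. From the first step $\rho(A_{N}\otimes K)=0$, so only $\rho(\mathcal{W}_{N}\otimes K)=0$ remains. Using $[L,\mathcal{W}_{N}\otimes K]\subseteq(\mathcal{W}_{N}\otimes K)+(A_{N}\otimes K)$ with the second summand acting as $0$, both $\rho(I)V=\rho(\mathcal{W}_{N}\otimes K)V$ and the invariants $V^{I}=\{v:\rho(I)v=0\}$ are $L$-submodules; by irreducibility each is $0$ or $V$. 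If $\rho(I)V=0$ or $V^{I}=V$ we are done, so the entire problem collapses to excluding the case $\rho(I)V=V$ and $V^{I}=0$.

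Third, and this is where I expect the real work, I would rule out that bad case. Two structural facts are available: every operator in $\rho(\mathcal{W}_{N}\otimes K)$ commutes with the associative action of $A_{N}\otimes B$, since $[D(u,\textbf{r})\otimes k,\,t^{\textbf{s}}\otimes 1]=(u|\textbf{s})\,t^{\textbf{r}+\textbf{s}}\otimes k$ acts as $\psi(k)(u|\textbf{s})\rho(t^{\textbf{r}+\textbf{s}}\otimes 1)=0$; and, as each $\rho(t^{\textbf{r}}\otimes 1)$ is invertible and shifts weight by $\sum_{i}r_{i}\delta_{i}$, $V$ is a free $A_{N}$-module $V\cong A_{N}\otimes V_{\lambda_{0}}$ with $V_{\lambda_{0}}$ finite-dimensional. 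Consequently every degree-$\textbf{r}$ element of $\rho(\mathcal{W}_{N}\otimes K)$ has the form $\rho(t^{\textbf{r}}\otimes 1)\circ E$ for some $E\in\mathrm{End}(V_{\lambda_{0}})$. I would then combine the multiplicative filtration $[\rho(\mathcal{W}_{N}\otimes K^{i}),\rho(\mathcal{W}_{N}\otimes K^{j})]\subseteq\rho(\mathcal{W}_{N}\otimes K^{i+j})$ with finite-dimensionality of $V_{\lambda_{0}}$, and restrict to the subalgebra $\mathcal{W}_{N}\otimes 1\cong\mathcal{W}_{N}\ltimes A_{N}$, where the classification of weight modules with finite-dimensional weight spaces (Theorem 2.9, the Larsson--Shen modules) determines the $\mathcal{W}_{N}$-structure; matching that structure against the $A_{N}$-linear family $\{E\}$ should force every $E=0$, hence $\rho(\mathcal{W}_{N}\otimes K)=0$. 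The main obstacle is exactly this last step, namely showing that no nonzero $A_{N}$-linear family $\{E\}$ is compatible with the Witt bracket relations, i.e. genuinely excluding $\rho(I)V=V$; everything preceding it is formal.
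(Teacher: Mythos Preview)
This theorem is quoted from [9] without proof in the present paper, but the analogous arguments carried out in Sections~3.2 and~4.2 are explicitly modeled on [9] (see the proof of Proposition~3.11), so one can read off the intended method from Lemma~3.12.

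Your first two steps are correct and coincide with what [9] does. The gap, which you yourself flag, is the third step, and your proposed route there does not work as stated. Invoking the Larsson--Shen classification for $V$ restricted to $(\mathcal{W}_{N}\ltimes A_{N})\otimes 1$ is circular: that theorem requires irreducibility over $\mathcal{W}_{N}\ltimes A_{N}$, which only follows \emph{after} $\rho(I)=0$ is known. And ``matching that structure against the $A_{N}$-linear family $\{E\}$ should force every $E=0$'' is not an argument; nothing you have written explains why the Witt bracket relations kill all the $E$'s.

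The actual mechanism in [9], mirrored in Lemma~3.12, uses your filtration observation in a sharper way and replaces the appeal to Larsson--Shen by a reductivity argument. One passes to the finite-dimensional irreducible module $V/W\cong V_{\lambda_{0}}$ for a suitable Lie algebra $D_{2}$, so that the image $\varphi(D_{2})\subseteq\mathfrak{gl}(V/W)$ is reductive (Lemma~2.1). The goal is then to show that $\varphi(\mathcal{W}_{N}\otimes K)$ is a \emph{solvable} ideal of this reductive algebra, hence central, hence scalar. If $K^{n}=0$ for some $n$ this is immediate from the filtration $[\mathcal{W}_{N}\otimes K^{i},\mathcal{W}_{N}\otimes K^{j}]\subseteq\mathcal{W}_{N}\otimes K^{i+j}$. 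Otherwise one shows $\mathcal{W}_{N}\otimes K^{n}\subseteq\ker\varphi$ for some $n$: since the image has some finite dimension $d$, any $d{+}1$ elements $D(u_{i},\textbf{r}_{i})\otimes b_{1}$ with suitably generic $\textbf{r}_{i}$ are linearly dependent modulo $\ker\varphi$; bracketing the resulting relation repeatedly with well-chosen $D(v,\textbf{s})\otimes b_{j}$ reduces the number of terms while raising the $K$-degree, eventually yielding a single nonzero generator $D(u,\textbf{r})\otimes(b_{1}\cdots b_{n})\in\ker\varphi$, and two further brackets spread this over all of $\mathcal{W}_{N}\otimes K^{n+2}$. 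Once one knows the elements $D(u,\textbf{r})b-\psi(b)D(u,\textbf{r})$ act by scalars, a short trace/commutator computation (as after Lemma~3.12) forces those scalars to vanish. This commutator-reduction step, together with the reductive/solvable dichotomy, is precisely the missing idea in your sketch.
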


That is, any such module is, in fact, a single-point evaluation module and therefore an irreducible module for the underlying $\mathcal{W}_{N}\ltimes A_{N}$ with finite-dimensional weight spaces. Later, Sachin Sarma et. al. proved that the action of $A_{N}\otimes B$ is always associative if $V$ is uniformly bounded. In fact, they demonstrated that every irreducible weight module of $\mathcal{W}_{N}\otimes B$ with finite-dimensional weight spaces is a single point evaluation module. 
\subsection{ Map Extended Special algebra and Map extended Hamiltonian algebra  }
Similar to map extended Witt algebra$(\mathcal{W}_{N}\ltimes A_{N})\otimes B$, one can naturally define map extended Special algebra $(\mathcal{S}_{N}\ltimes A_{N})\otimes B $ and map extended Hamiltonian algebra $(\tilde{\mathcal{H}}_{N}\ltimes A_{N})\otimes B $. Note that $ \mathfrak{h}\equiv \mathfrak{h}\otimes 1 $ plays the role of Cartan subalgebra for each of these algebras, where 1 is the unity of $B$. Now we define the associative action and quasi-associative action of $A_{N}\otimes B$ for a given irreducible $(\mathcal{S}_{N}\ltimes A_{N})\otimes B $-module. We assume that $t^{\textbf{0}}\otimes1$ acts as a scalar $c\neq 0$. The definitions are similar for $(\mathcal{\tilde{H}}\ltimes A_{N})\otimes B$.
\begin{definition}
    The action of $A_{N}\otimes B$ is said to be associative, if we have \begin{equation*}
        t^{\textbf{m}}b_{1}t^{\textbf{n}}b_{2}= t^{\textbf{m}+\textbf{n}}b_{1}b_{2}, \; \text{on} \; V \: \forall \textbf{m},\textbf{n} \in \mathbb{Z}^{N} \; \text{and} \; b_{1},b_{2}\in B.
    \end{equation*}
\end{definition}
\begin{definition}
    The action of $A_{N}\otimes B$ is said to be quasi-associative, if there exists an algebra homomorphism $\psi: B \to \mathbb{C}$ with $\psi(1)=1$, a linear map $\phi: B \to \mathbb{C}$ with $\phi(1)=1$ and nonzero complex numbers $\lambda$ and $\mu$ such that
    \begin{align*}
        t^{\textbf{m}}\otimes b_{1}\cdot t^{\textbf{n}}\otimes b_{2} &= \lambda t^{\textbf{m}+\textbf{n}}\otimes b_{1}b_{2}=\lambda \psi(b_{1}b_{2})t^{\textbf{m}+\textbf{n}}\otimes1, \: \text{on} \; V \\ &\;  \; \; \; \; \; \; \; \; \; \;  \text{for} \; \textbf{m},\textbf{n},\textbf{m}+\textbf{n}\in \mathbb{Z}^{N}\setminus\{\textbf{0}\}, b_{1},b_{2}\in B,
       \\
 t^{\textbf{m}}\otimes b_{1}\cdot t^{\textbf{0}}\otimes b_{2} &= c\phi(b_{2})t^{\textbf{m}}\otimes b_{1}, \; \text{on}\; V \; \text{for} \; \textbf{m}\in \mathbb{Z}^{N}, b_{1},b_{2}\in B, 
 \\
 t^{\textbf{m}}\otimes b_{1}\cdot t^{-\textbf{m}}\otimes b_{2}&
 =\mu\psi(b_{1}b_{2})t^{\textbf{0}}\otimes 1, \; \text{on} \;  V \;  \text{for} \; \textbf{m}\in \mathbb{Z}^{N}\setminus\{\textbf{0}\}, b_{1},b_{2}\in B,
    \end{align*}
where $ \lambda^{2}=c\mu$.
\end{definition}
\section{ Harish Chandra Modules for map extended Special algebra }
We first recall some of the known results for the underlying "Extended Special algebra". Y. Billig and J. Talboom classified the irreducible and indecomposable Jet modules for the Lie algebra of divergence zero vector fields on $N$-dimensional torus. We recall their result for the irreducible jet modules below:
\begin{theorem}{[\textbf{[2]},Theorem 5.2]}
    Suppose $W$ be a finite-dimensional irreducible module for $\mathfrak{sl}_{N}$, we extend it a $\mathfrak{gl}_{N}$-module by assuming that identity matrix acts trivially on W. Let $\alpha, \beta \in \mathbb{C}^{N}$.  Consider $W\otimes A_{N}$ and the following action of $\mathcal{S}_{N}\ltimes A_{N}$ on it:
    \begin{enumerate}
        \item $D(u,\textbf{r})\cdot w\otimes t^{\textbf{s}}=(u|\textbf{s}+\beta)w\otimes t^{\textbf{s}+\textbf{r}} + \sum_{i,j}u_{i}r_{j}E_{ji}w\otimes t^{\textbf{s}+\textbf{r}}$, for $\textbf{r}\neq 0$.
        \item $D(u,\textbf{0})\cdot w\otimes t^{\textbf{s}} = (u, \alpha + \textbf{s})w\otimes t^{\textbf{s}}$.
        \item $t^{\textbf{r}}\cdot w\otimes t^{\textbf{s}}= w\otimes t^{\textbf{s}+\textbf{r}}$.
    \end{enumerate}
    Every irreducible Jet module for $\mathcal{S}_{N}$ with finite-dimensional weight spaces with respect to $\mathfrak{h}$ can be expressed in the form described above.
\end{theorem}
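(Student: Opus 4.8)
The plan is to realize $V$ as a free $A_{N}$-module and then translate the $\mathcal{S}_{N}$-action into a family of operators on a single weight space. Fix a weight $\lambda \in P(V)$ and set $W = V_{\lambda}$. Since $V$ is a Jet module, the compatible $A_{N}$-action forces the support $P(V)$ to lie in the single coset $\lambda + \mathbb{Z}^{N}$, with each $t^{\textbf{r}}$ mapping $V_{\mu}$ into $V_{\mu + \sum_{i} r_{i}\delta_{i}}$. After normalizing $t^{\textbf{0}}$ to act as the identity, the relation $t^{\textbf{r}}t^{-\textbf{r}} = t^{\textbf{0}}$ shows each $t^{\textbf{r}}$ is invertible, so all weight spaces are isomorphic and $\dim V_{\mu} = \dim W =: d < \infty$ by the finite-multiplicity hypothesis. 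Identifying $V_{\lambda + \sum_{i} s_{i}\delta_{i}}$ with $W \otimes t^{\textbf{s}}$ via these isomorphisms yields $V \cong W \otimes A_{N}$ as an $A_{N}$-module, so that (3) holds by construction, and each $D(u,\textbf{r})$ maps $W \otimes t^{\textbf{s}}$ into $W \otimes t^{\textbf{s}+\textbf{r}}$; I record its action by an operator $T(u,\textbf{r},\textbf{s}) \in \mathrm{End}(W)$ with $D(u,\textbf{r})\cdot(w\otimes t^{\textbf{s}}) = T(u,\textbf{r},\textbf{s})w \otimes t^{\textbf{s}+\textbf{r}}$.

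Next I would exploit the Jet-compatibility $[D(u,\textbf{r}), t^{\textbf{n}}] = (u|\textbf{n})\,t^{\textbf{r}+\textbf{n}}$: evaluating both sides on $w \otimes t^{\textbf{s}}$ gives $T(u,\textbf{r},\textbf{s}+\textbf{n}) - T(u,\textbf{r},\textbf{s}) = (u|\textbf{n})\mathrm{Id}$ for all $\textbf{n}$, so $T$ is affine in its weight argument:
\begin{equation*}
T(u,\textbf{r},\textbf{s}) = (u|\textbf{s})\mathrm{Id} + S(u,\textbf{r}), \qquad S(u,\textbf{r}):= T(u,\textbf{r},\textbf{0}).
\end{equation*}
Specializing to the degree-zero derivations $D(u,\textbf{0}) = \sum_{i} u_{i}d_{i} \in \mathfrak{h}$, which act on each weight space by the weight scalar, identifies $S(u,\textbf{0}) = (u|\alpha)\mathrm{Id}$ with $\alpha$ encoding $\lambda$; this is precisely (2). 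It remains only to pin down $S(u,\textbf{r})$ for $\textbf{r} \neq \textbf{0}$.

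For this I would substitute the affine form into the bracket $[D(u,\textbf{r}), D(v,\textbf{s})] = D\big((u|\textbf{s})v - (v|\textbf{r})u,\ \textbf{r}+\textbf{s}\big)$. The terms quadratic and linear in the running weight $\textbf{p}$ cancel identically, and comparing the $\textbf{p}$-independent operator coefficients produces the functional equation
\begin{equation*}
[S(u,\textbf{r}), S(v,\textbf{s})] + (u|\textbf{s})S(v,\textbf{s}) - (v|\textbf{r})S(u,\textbf{r}) = S\big((u|\textbf{s})v - (v|\textbf{r})u,\ \textbf{r}+\textbf{s}\big),
\end{equation*}
valid whenever $D(u,\textbf{r}), D(v,\textbf{s}) \in \mathcal{S}_{N}$, i.e. $(u|\textbf{r}) = (v|\textbf{s}) = 0$. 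Solving this is the main obstacle. Here I would use the finite-dimensionality of $W$ to show that $S(u,\textbf{r})$ depends polynomially on $\textbf{r}$ of bounded degree, and then feed this back into the functional equation to force it to be bilinear, of the shape $S(u,\textbf{r}) = (u|\beta)\mathrm{Id} + \sum_{i,j} u_{i}r_{j}E_{ji}$; re-substitution shows the $E_{ji}$ satisfy the $\mathfrak{gl}_{N}$-relations, so $W$ is a $\mathfrak{gl}_{N}$-module. Crucially, the trace part of $\sum_{i,j} u_{i}r_{j}E_{ji}$ is proportional to $(u|\textbf{r})\sum_{k}E_{kk}$, which vanishes on $\mathcal{S}_{N}$; hence only the traceless part acts and we may take the identity matrix to act trivially, so $W$ is an $\mathfrak{sl}_{N}$-module. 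Absorbing the scalar $(u|\beta)\mathrm{Id}$ into the term $(u|\textbf{s}+\beta)\mathrm{Id}$ reproduces (1).

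Finally I would settle the irreducibility equivalence: because the $E_{ji}$ are realized inside the $\mathcal{S}_{N}$-action and the $t^{\textbf{r}}$ are invertible, any $\mathfrak{sl}_{N}$-submodule $W' \subseteq W$ yields the $(\mathcal{S}_{N} \ltimes A_{N})$-submodule $W' \otimes A_{N}$ and, conversely, a weight-space analysis of any proper submodule returns a proper $\mathfrak{sl}_{N}$-submodule of $W$. Thus $V$ is irreducible exactly when $W$ is an irreducible $\mathfrak{sl}_{N}$-module, which is finite-dimensional by the uniform bound $d$. The reverse direction — that each prescribed $W \otimes A_{N}$ is a genuine module — is a direct verification of the $\mathcal{S}_{N} \ltimes A_{N}$ relations from the $\mathfrak{sl}_{N}$-relations for the $E_{ji}$, which I would relegate to a short computation. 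I expect the polynomiality-and-bilinearity step for $S(u,\textbf{r})$ to be the technical heart, since that is where finite weight multiplicity is genuinely used and where one passes from the abstract functional equation to the explicit tensor-field form.
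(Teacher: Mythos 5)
The paper does not actually prove this statement: it is Theorem 5.2 of Billig--Talboom [2], imported as a black box, so there is no internal proof to compare against. Judged on its own terms, the first half of your outline correctly reproduces the standard setup of [1], [2]: invertibility of the $t^{\mathbf{r}}$ identifies $V$ with $W\otimes A_{N}$, the compatibility $[D(u,\mathbf{r}),t^{\mathbf{n}}]=(u|\mathbf{n})t^{\mathbf{r}+\mathbf{n}}$ does force the affine form $T(u,\mathbf{r},\mathbf{s})=(u|\mathbf{s})\mathrm{Id}+S(u,\mathbf{r})$, and the functional equation you derive for $S$ is the right one. The closing irreducibility equivalence is also fine in outline.

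The gap sits exactly where you flag the "technical heart," and it is a genuine missing idea rather than a deferrable computation. First, polynomial dependence of $S(u,\mathbf{r})$ on $\mathbf{r}$ does not follow from $\dim W<\infty$ alone; in [1] and [2] it is obtained from a separate polynomiality theorem (equivalently, by passing to the jet Lie algebra $\mathcal{S}_{N}^{+}$ and showing a finite-codimension ideal acts trivially). Second, and more seriously, the functional equation does \emph{not} by itself force $S(u,\mathbf{r})$ to have degree one in $\mathbf{r}$: the exactly analogous equation for the Hamiltonian subalgebra admits solutions quadratic in $\mathbf{r}$ --- see Theorem 4.1 of this very paper, where $h(\mathbf{r})$ acts with terms such as $r_{m+i}^{2}E_{m+i,i}$ and $r_{i}r_{m+j}(E_{ij}-E_{m+j,m+i})$. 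So the step "feed this back into the functional equation to force it to be bilinear" conceals the actual content of [2]: one must classify the finite-dimensional irreducible modules of the non-negative part of the jet algebra attached to $\mathcal{S}_{N}$, whose degree-zero piece is $\mathfrak{gl}_{N}$, and prove that the strictly positive-degree part annihilates them; it is that representation-theoretic argument, not the functional equation, which kills the higher-degree terms. As written, your argument would "prove" the same degree-one normal form for $\tilde{\mathcal{H}}_{N}$, which is false, so the degree bound must be supplied by a mechanism specific to $\mathcal{S}_{N}$.
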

We now investigate the irreducible uniformly bounded weight modules of $(\mathcal{S}_{N}\ltimes A_{N})\otimes B$, under the condition that some $t^{\textbf{r}}\otimes 1$(for $\textbf{r}\neq \textbf{0}$) and $t^{\textbf{0}}\otimes 1$ act nontrivially. Let $V$ be a uniformly bounded weight module for $(\mathcal{S}_{N}\ltimes A_{N})\otimes B$ under the assumptions above. Let $A_{N}' = \bigoplus_{\textbf{0}\neq \textbf{r}\in \mathbb{Z}^{N}}\mathbb{C}t^{\textbf{r}}$, then $(\mathcal{S}_{N}\ltimes A_{N})\otimes B= ((\mathcal{S}_{N}\ltimes A_{N}')\otimes B) \oplus (t^{0}\otimes B)$. As $t^{0}\otimes B$ being central in $(\mathcal{S}_{N}\ltimes A_{N})\otimes B$ and $V$ is irreducible uniformly bounded weight module, so there exists a linear map $\phi: B\to \mathbb{C}$ with $\phi(1)=1$ such that $t^{0}\otimes b\cdot v = c\phi(b)v$ for all $v\in V$ and $b\in B$, where $c$ is the scalar by which $t^{\textbf{0}}\otimes 1$ acts on $V$. We see that the action of $A_{N}\otimes B$ on $V$ is quasi-associative.
\subsection{ Quasi-associative action of $A_{N}\otimes B$}
We have the following lemma, the proof of which is similar to  that in [20].
\begin{lemma}
    Let $V=\bigoplus_{\textbf{n}\in \mathbb{Z}^{N}}V_{\textbf{n}}$ be an irreducible uniformly bounded
    $\mathbb{Z}^{N}$-graded module  over the associative algebra $A_{N}$, then $\text{dim}V_{\textbf{n}}\leq 1$, for all $n\in \mathbb{Z}^{N}$.
\end{lemma}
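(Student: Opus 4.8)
The plan is to exploit irreducibility together with the associative algebra structure of $A_N$ to show each graded piece $V_{\textbf{n}}$ is at most one-dimensional. First I would observe that the zero-degree element $t^{\textbf{0}}$ acts as a nonzero scalar $c$ (by hypothesis), so each $t^{\textbf{r}}$ has the invertible companion $t^{-\textbf{r}}$ with $t^{\textbf{r}}t^{-\textbf{r}}$ acting as $c$ up to the module structure; this should make translation between graded components $V_{\textbf{n}}$ and $V_{\textbf{n}+\textbf{r}}$ controllable. Concretely, I would examine the linear maps $t^{\textbf{r}}\colon V_{\textbf{n}}\to V_{\textbf{n}+\textbf{r}}$ and try to argue that they are injective (or become injective after passing to a suitable submodule), so that the dimensions $\dim V_{\textbf{n}}$ are all equal to a common value, say $d$, independent of $\textbf{n}$.

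Next I would reduce to showing $d\le 1$. Fix a weight space $V_{\textbf{n}}$ and consider the stabilizing structure: the family of operators $\{t^{\textbf{r}}t^{-\textbf{r}}\}$ all map $V_{\textbf{n}}$ to itself, and by associativity they commute and generate a commutative subalgebra of $\mathrm{End}(V_{\textbf{n}})$. Using uniform boundedness, $\dim V_{\textbf{n}}\le M$ is finite, so this commutative family has a common eigenvector. The key idea is to build a nonzero graded submodule from such a common eigenvector: starting from a one-dimensional subspace $\mathbb{C}v\subseteq V_{\textbf{n}}$ on which all the relevant zero-degree compositions act by scalars, I would let the various $t^{\textbf{r}}$ propagate $v$ through all degrees, checking that the span of $\{t^{\textbf{r}}v\}$ is an $A_N$-submodule with at most one-dimensional graded pieces. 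Irreducibility then forces this submodule to be all of $V$, giving $\dim V_{\textbf{n}}\le 1$ for every $\textbf{n}$.

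The main obstacle I anticipate is controlling the non-associativity a priori: since we only know $A_N\otimes B$ acts, the bare $A_N$-action need not be associative on the nose, so the identities $t^{\textbf{r}}t^{\textbf{s}}=ct^{\textbf{r}+\textbf{s}}$ (the normalization that makes the companion/propagation argument clean) may require justification rather than being assumed. I would handle this by first establishing that the graded $A_N$-action is associative up to the scalar $c$ on an irreducible uniformly bounded module—this is precisely the point where the argument in [20] (Guo--Lu--Zhao) for the loop-Virasoro setting is adapted, and the reference to ``similar to [20]'' signals that the eigenvector-plus-propagation mechanism is the crux. The delicate bookkeeping will be ensuring the submodule generated from a common eigenvector really is graded of dimension $\le 1$ in each degree and is nonzero, so that irreducibility applies; once that submodule is shown to equal $V$, the bound $\dim V_{\textbf{n}}\le 1$ follows for all $\textbf{n}$.
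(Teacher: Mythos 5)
Your proposal is correct and lands on essentially the same argument the paper defers to [20] for: a nonzero homogeneous vector $v\in V_{\mathbf{n}}$ generates the graded submodule $A_{N}\cdot v=\sum_{\mathbf{r}}\mathbb{C}\,t^{\mathbf{r}}v$, whose graded components are each at most one-dimensional, and irreducibility forces this submodule to be all of $V$. Note, though, that your worry about non-associativity and the common-eigenvector step are both unnecessary here: the lemma explicitly hypothesizes that $V$ is a module over the \emph{associative} algebra $A_{N}$ (the quasi-associativity of the $(A_{N}\otimes B)$-action is established separately, later in Section 3.1), so $t^{\mathbf{s}}\cdot t^{\mathbf{r}}v=t^{\mathbf{s}+\mathbf{r}}v$ holds by assumption and any nonzero homogeneous $v$ does the job.
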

\begin{lemma}
    Suppose that $V$ is an irreducible module for $(\mathcal{S}_{N}\ltimes A_{N})\otimes B $ and $x\in \mathcal{U}(A_{N}\otimes B)$. If $x\cdot v = 0$ for some nonzero $v\in V$, then x is locally nilpotent on $V$.
\end{lemma}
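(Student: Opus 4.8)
The plan is to show that
\[
W=\{\,w\in V : x^{k}\cdot w=0 \ \text{for some } k\in\mathbb{N}\,\}
\]
is a nonzero submodule of $V$; irreducibility will then force $W=V$, which is precisely the assertion that $x$ acts locally nilpotently. Since $x\cdot v=0$ for the given nonzero $v$, we have $v\in W$, so $W\neq 0$. Moreover $W$ is a subspace: if $x^{k}w_{1}=0$ and $x^{l}w_{2}=0$ then $x^{\max(k,l)}(w_{1}+w_{2})=0$. It remains to prove that $W$ is stable under the action of $\mathfrak{g}:=(\mathcal{S}_{N}\ltimes A_{N})\otimes B$.

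The two structural facts I would record first are the following. Since $A_{N}$ is an abelian ideal of $\mathcal{S}_{N}\ltimes A_{N}$, the subspace $A_{N}\otimes B$ is an abelian Lie subalgebra of $\mathfrak{g}$, and hence $\mathcal{U}(A_{N}\otimes B)$ is a \emph{commutative} subalgebra of $\mathcal{U}(\mathfrak{g})$; in particular $x$ commutes with every element of $\mathcal{U}(A_{N}\otimes B)$. Secondly, for a special field one computes $[D(u,\textbf{s})\otimes b_{1},\,t^{\textbf{r}}\otimes b_{2}]=(u|\textbf{r})\,t^{\textbf{r}+\textbf{s}}\otimes b_{1}b_{2}\in A_{N}\otimes B$, so $\operatorname{ad}(g)$ maps $A_{N}\otimes B$ into itself for every $g\in\mathcal{S}_{N}\otimes B$. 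As $\operatorname{ad}(g)$ is a derivation of the associative algebra $\mathcal{U}(\mathfrak{g})$, it therefore preserves the subalgebra generated by $A_{N}\otimes B$; hence $x_{1}:=[g,x]\in\mathcal{U}(A_{N}\otimes B)$, and in particular $x_{1}$ commutes with $x$.

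To verify stability I would write an arbitrary element of $\mathfrak{g}$ as $g=g_{S}+g_{A}$ with $g_{S}\in\mathcal{S}_{N}\otimes B$ and $g_{A}\in A_{N}\otimes B$; since the action is linear and $W$ is a subspace, it suffices to treat the two summands separately. If $x^{k}w=0$ then $x^{k}(g_{A}w)=g_{A}(x^{k}w)=0$ because $g_{A}$ commutes with $x$, so $g_{A}w\in W$. For $g=g_{S}$, the commutativity of $x_{1}=[g,x]$ with $x$ collapses the Leibniz expansion $[g,x^{m}]=\sum_{i=0}^{m-1}x^{i}[g,x]\,x^{m-1-i}$ to the single term $[g,x^{m}]=m\,x^{m-1}x_{1}$, equivalently $x^{m}g=g\,x^{m}-m\,x^{m-1}x_{1}$. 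Applying this to $w$ with $m=k+1$ and using $x^{k+1}w=0$ together with $x^{k}(x_{1}w)=x_{1}(x^{k}w)=0$ yields $x^{k+1}(gw)=0$, so $gw\in W$. Hence $\mathfrak{g}\cdot W\subseteq W$ and $W$ is a submodule.

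The only delicate point is the collapse $[g,x^{m}]=m\,x^{m-1}x_{1}$: a priori the expansion has $m$ distinct summands, and it is exactly the containment $[g,x]\in\mathcal{U}(A_{N}\otimes B)$ that lets each summand be rewritten as $x^{m-1}x_{1}$. I expect establishing this containment---that $\operatorname{ad}$ of a divergence-zero field preserves the commutative subalgebra $\mathcal{U}(A_{N}\otimes B)$---to be the crux of the argument; everything else is short bookkeeping, and once $W$ is exhibited as a nonzero submodule the conclusion is immediate from the irreducibility of $V$.
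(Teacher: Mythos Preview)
Your proof is correct and follows essentially the same approach as the paper. Both arguments rest on the single structural fact that $A_{N}\otimes B$ is an abelian subalgebra normalized by $\mathcal{S}_{N}\otimes B$, so that $[g,x]$ commutes with $x$; the paper phrases this as $[[(\mathcal{S}_{N}\ltimes A_{N})\otimes B,\,A_{N}\otimes B],\,A_{N}\otimes B]=0$ and then shows by direct induction that $x^{k+1}$ annihilates any $k$-fold product of generators applied to $v$, while you package the same inductive step (your identity $x^{m}g=gx^{m}-mx^{m-1}x_{1}$) as the statement that $W=\{w:x^{k}w=0\text{ for some }k\}$ is a submodule.
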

\begin{proof}
    Note that $[[(\mathcal{S}_{N}\ltimes A_{N})\otimes B, A_{N}\otimes B], A_{N}\otimes B]=0$, therefore $[[D(u,r)\otimes b,x],x]\cdot v=0$, which gives $x^{2}\cdot D(u,r)\otimes b\cdot v =0$. It follows by induction that $ x^{k+1}\cdot D(u_{1},\textbf{r}_{1})\otimes b_{1}\cdots D(u_{k},\textbf{r}_{k})\otimes b_{k}\cdot v =0 $, for every $k\in \mathbb{Z}_{+}$, $u_{i}\in \mathbb{C}^{N}$, $\textbf{r}_{i}\in \mathbb{Z}^{n}$ with $(u_{i}|\textbf{r}_{i})=0$($\forall 1 \leq i \leq k$). The result then follows from the irreducibility of $V$.
\end{proof}
Let $V=\bigoplus_{\lambda \in \mathfrak{h}^{*}} V_{\lambda}$ be an irreducible uniformly bounded Harish-Chandra module for $(\mathcal{S}_{N}\ltimes A_{N})\otimes B$. Fix a weight $\lambda $ of $V$. By the irreducibility of $V$, it follows that $P(V)\subseteq \{\lambda + \sum_{i=1}^{N}r_{i}\delta_{i} \mid r_{i}\in \mathbb{Z} \}$. Let $\Lambda =(\lambda(d_{i}))_{i=1}^{N} $, then it is convenient to use the notation $V=\bigoplus_{\textbf{r}\in \mathbb{Z}^{N}} V_{\Lambda +\textbf{r}}$, where $V_{\Lambda + \textbf{r}}= V_{\lambda + \sum_{i=1}^{N}r_{i}\delta_{i}}$, for every $\textbf{r}=(r_{i})_{i=1}^{N}\in \mathbb{Z}^{N}$. \\
We prove the following proposition, the proof of which is a generalization of Proposition 3.4 of [20]. 
\begin{proposition}
    Let $V$ be a irreducible uniformly bounded Harish-Chandra module for $(\mathcal{S}_{N}\ltimes A_{N})\otimes B$ with respect to $\mathfrak{h}$. Then  for a given $ b\in B$ either all $t^{\textbf{m}}\otimes b$, $\textbf{m}\neq \textbf{0}$ act injectively on V or all $ t^{\textbf{m}}\otimes b$, $\textbf{m}\neq \textbf{0}$ act nilpotently on V.
\end{proposition}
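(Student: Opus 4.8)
The plan is to fix $b$ and study the commuting family $X_{\textbf{m}} := t^{\textbf{m}}\otimes b$ ($\textbf{m}\neq\textbf{0}$); since $A_{N}\otimes B$ is an abelian ideal of $\mathfrak{g}=(\mathcal{S}_{N}\ltimes A_{N})\otimes B$, these operators pairwise commute and $X_{\textbf{m}}$ maps $V_{\Lambda+\textbf{r}}$ into $V_{\Lambda+\textbf{r}+\textbf{m}}$. First I would establish the dichotomy one direction at a time: for each fixed $\textbf{m}$ the operator $X_{\textbf{m}}$ is either injective on $V$ or locally nilpotent on $V$, the two being mutually exclusive since $V\neq 0$. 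The clean way to see this is to show that the local-nilpotence space $V[\textbf{m}]=\{v\in V: X_{\textbf{m}}^{k}v=0 \text{ for some } k\}$ is a $\mathfrak{g}$-submodule: because $A_{N}\otimes B$ is an abelian ideal we have $(\operatorname{ad}X_{\textbf{m}})^{2}\big(D(u,\textbf{r})\otimes b'\big)=0$, so the truncated binomial identity $X_{\textbf{m}}^{k+1}(g\cdot v)=g\,X_{\textbf{m}}^{k+1}v+(k+1)[X_{\textbf{m}},g]\,X_{\textbf{m}}^{k}v$ shows that $X_{\textbf{m}}^{k}v=0$ forces $X_{\textbf{m}}^{k+1}(g\cdot v)=0$ for every $g\in\mathfrak{g}$. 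By irreducibility $V[\textbf{m}]\in\{0,V\}$, which is exactly the per-direction dichotomy (equivalently one may quote the local-nilpotence lemma above, which already gives ``nonzero kernel $\Rightarrow$ locally nilpotent'').

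It then remains to prove that the type of $X_{\textbf{m}}$ does not depend on $\textbf{m}$. The transfer tool is the bracket relation: whenever $(u|\textbf{s})=0$, the element $D(u,\textbf{s})\otimes 1$ lies in $\mathcal{S}_{N}\otimes 1$ and $[\,D(u,\textbf{s})\otimes 1,\ X_{\textbf{m}}\,]=(u|\textbf{m})\,X_{\textbf{m}+\textbf{s}}$. Given non-parallel $\textbf{m},\textbf{n}$, I can pick $u\perp(\textbf{n}-\textbf{m})$ with $(u|\textbf{m})\neq 0$ (possible precisely because $\textbf{m}\not\parallel \textbf{n}$), so that $X_{\textbf{n}}$ is a nonzero scalar multiple of $[\,D(u,\textbf{n}-\textbf{m})\otimes 1,\ X_{\textbf{m}}\,]$. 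Since $N\geq 2$ (for $N=1$ the algebra $\mathcal{S}_{N}$ is trivial), any two nonzero lattice vectors are joined by a chain of pairwise non-parallel ones — a parallel pair being bridged through a common transverse vector — so it suffices to show that for a non-parallel pair one cannot simultaneously have $X_{\textbf{m}}$ injective and $X_{\textbf{n}}$ locally nilpotent.

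The hard part is exactly this mixed-case exclusion, and it is where uniform boundedness must be used in an essential way: a naive model of commuting weight-shifts alone does permit an injective $X_{\textbf{m}}$ next to a locally nilpotent (even zero) $X_{\textbf{n}}$, so the argument has to feed the full module structure against the bound $\dim V_{\lambda}\leq M$. The plan is first to note that if $X_{\textbf{m}}$ is injective then along each $\textbf{m}$-line the multiplicities $\dim V_{\Lambda+\textbf{r}+p\textbf{m}}$ are non-decreasing in $p$ and bounded by $M$, hence eventually constant, so $X_{\textbf{m}}$ is bijective between all sufficiently advanced weight spaces in the $\textbf{m}$-direction. I would then combine this eventual bijectivity with the bracket relation above and with $X_{\textbf{m}}X_{\textbf{n}}=X_{\textbf{n}}X_{\textbf{m}}$: a kernel vector of $X_{\textbf{n}}$ propagates under the injective $X_{\textbf{m}}$ and under the operators $D(u,\cdot)\otimes 1$, and transporting these vectors back into a single weight space via the (invertible) $X_{\textbf{m}}$ should produce more than $M$ linearly independent vectors there, contradicting uniform boundedness. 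Making this dimension count precise — pinning down how the injective and nilpotent directions interact against $M$ — is the technical core, and is the place where I expect to spend the real effort, generalizing Proposition~3.4 of [20]; granting it, the per-direction dichotomy together with the connectivity argument immediately yields the stated all-or-nothing conclusion.
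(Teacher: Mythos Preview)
Your scaffolding is sound and matches the paper: the per-direction dichotomy via the local-nilpotence submodule $V[\textbf m]$ is exactly Lemma~3.3, and the connectivity reduction (link non-parallel $\textbf m,\textbf n$ directly, bridge parallel ones through a transverse vector) is the same geometry the paper uses with the $d_{ij}$-operators. The proposal also correctly isolates the one genuine difficulty: ruling out ``$X_{\textbf m}$ injective, $X_{\textbf n}$ locally nilpotent'' for a non-parallel pair.

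Where the proposal falls short is that your sketch of this step is not a working argument. Propagating a kernel vector $v\in\ker X_{\textbf n}$ along the $\textbf m$-direction and then ``transporting back'' by the eventually-invertible $X_{\textbf m}$ returns you to the \emph{same} vector, so no independence is created; and the operators $D(u,\textbf s)\otimes 1$ do not preserve $\ker X_{\textbf n}$ (the bracket $[D(u,\textbf s),X_{\textbf n}]=(u|\textbf n)X_{\textbf{n}+\textbf s}$ lands in a different $X$), so you cannot manufacture $>M$ independent kernel vectors in a fixed weight space by that route either. The paper's mechanism is different and worth internalizing. First it upgrades \emph{locally} nilpotent to \emph{globally} nilpotent (Claim~1): the degree-zero operator $X_{-\textbf n}X_{\textbf n}$ acts on each finite weight space, so uniform boundedness gives a single $N'$ with $(X_{-\textbf n})^{N'}(X_{\textbf n})^{N'}V=0$; applying $d_{ij}(\textbf m+\textbf n)$ and then $X_{\textbf n}$, the injective $X_{\textbf m}$ lets one strip off $X_{-\textbf n}$-factors one at a time until $(X_{\textbf n})^{N_0}V=0$. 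Second (Claim~3), after arranging $\dim V_{\Lambda}$ maximal, it proves by induction the identity
\[
\bigl(d_{ij}(\textbf m-\textbf n)\bigr)^{k}(X_{\textbf n})^{k}v \;=\; k!\,\det\!\begin{pmatrix}\textbf n\\ \textbf m\end{pmatrix}_{i,j}^{\!k}(X_{\textbf m})^{k}v \;+\; X_{\textbf n}\cdot u,\qquad v\in V_{\Lambda-k\textbf m},
\]
so at the nilpotency index the left side vanishes and $(X_{\textbf m})^{k}V_{\Lambda-k\textbf m}=V_{\Lambda}$ lies in the image of $X_{\textbf n}$; thus $X_{\textbf n}:V_{\Lambda-\textbf n}\to V_{\Lambda}$ is surjective, hence bijective by maximality of $\dim V_{\Lambda}$, and iterating contradicts nilpotence. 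The point is not a crowding of independent vectors in one weight space but a forced \emph{surjectivity} of $X_{\textbf n}$ onto the top-dimensional space; your plan should be rewritten around this identity (and the preliminary locally-nilpotent $\Rightarrow$ nilpotent step, which your sketch also omits).
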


\begin{proof}
Suppose $t^{\textbf{m}}b$ acts injectively on $V$. For the sake of simplicity, we use the notation $t^{\textbf{m}}b= t^{\textbf{m}}\otimes b$. We proceed with proving the following claims.
\\ 
\textbf{Claim 1:} For any $\textbf{n}\in \mathbb{Z}^{N}$ with $\det
\begin{pmatrix}
\textbf{n} \\
\textbf{m}
\end{pmatrix}_{i,j}\neq 0
 $,  $1\leq i\neq j \leq N$, if $t^{\textbf{n}}b$ is locally nilpotent on $V$, then it is also nilpotent on $V$.
 \\
 Let $\textbf{n}\in \mathbb{Z}^{N}$ be such that $\det
\begin{pmatrix}
\textbf{n} \\
\textbf{m}
\end{pmatrix}_{i,j}\neq 0
 $, for some  $1\leq i\neq j \leq N$ and $t^{\textbf{n}}b$ is locally nilpotent on $V$. Since $t^{-\textbf{n}}bt^{\textbf{n}}b$ is locally nilpotent on each graded component and  $V$ is uniformly bounded, there exists a positive integer say $N'$ such that $ (t^{-\textbf{n}}b\cdot t^{\textbf{n}}b)^{N'}\cdot V =0 $, i.e. $(t^{-\textbf{n}}b)^{N'}\cdot(t^{\textbf{n}}b)^{N'}\cdot V = 0$. By applying the operator $d_{ij}(\textbf{m+n}) = (m_{j}+n_{j})t^{\textbf{m}+\textbf{n}}d_{i}- (m_{i}+n_{i})t^{\textbf{m}+\textbf{n}
 }d_{j}$ to the above expression, we obtain 
 \begin{align*}
0 &= d_{ij}(\textbf{m}+\textbf{n})\cdot (t^{-\textbf{n}}b)^{N'}\cdot (t^{\textbf{n}}b)^{N'}\cdot V \\
 &= N' \text{det} \begin{pmatrix}
     \textbf{n} \\
     \textbf{m}
 \end{pmatrix}_{i,j} t^{\textbf{m}+ 2\textbf{n}}b\cdot (t^{-\textbf{n}}b)^{N'}\cdot (t^{\textbf{n}}b)^{N'-1}\cdot V - N'\text{det}\begin{pmatrix}
     \textbf{n} \\ \textbf{m}
 \end{pmatrix}_{i,j}t^{\textbf{m}}b(t^{-\textbf{n}}b)^{N'-1}(t^{\textbf{n}}b)^{N'}\cdot V.
\\
\end{align*}
 Operating  $t^{\textbf{n}}b$ in the above equation, we obtain $ t^{\textbf{m}}b(t^{-\textbf{n}}b)^{N'-1}(t^{\textbf{n}}b)^{N'+1}\cdot V =0$, and hence $(t^{-\textbf{n}}b)^{N'-1}(t^{\textbf{n}}b)^{N'+1}\cdot V =0 $, since $t^{\textbf{m}}b$ is assumed to be injective. Then it follows by  descending induction that $ (t^{\textbf{n}}(b))^{N_{0}}\cdot V=0$.
\\
\textbf{Claim 2:} $t^{q\textbf{m}/p}b$ acts injectively on $V$ for any $p, q \in \mathbb{Z}$, whenever $q\textbf{m}/p \in \mathbb{Z}^{N}\ \setminus \{ 0\}$. \\
Without loss of generality, we can assume that $q/p <0$. Assume on the contrary that $t^{q\textbf{m}/p}b$ is not injective on $V$. Lemma 3.3 gives that $t^{q\textbf{m}/p}b$ is locally nilpotent on $V$. Note that the operator $(t^{\textbf{m}}b)^{-q}(t^{q\textbf{m}/p}b)^{p}$ is locally nilpotent on each graded component $ V_{\Lambda +\textbf{m}}$, for every $\textbf{m}\in \mathbb{Z}^{N}$ and $V$ is uniformly bounded there we can find $N''\in \mathbb{Z}_{+}$, such that $((t^{\textbf{m}}b)^{-q}(t^{q\textbf{m}/p}b)^{p})^{N''}\cdot V= (t^{\textbf{m}}b)^{-qN''}(t^{q\textbf{m}/p}b)^{pN''}\cdot V = 0$, as $t^{\textbf{m}}b$ acts injectively on $V$, it follows that $(t^{q\textbf{m}/p}b)^{pN''}=0$. Let $N_{0}''\in \mathbb{Z}_{+}$ be the minimal such that $(t^{q\textbf{m}/p}b)^{N_{0}''}\cdot V=0$. Let $\textbf{n}\in \mathbb{Z}^{N}$ be such that $\text{det} \begin{pmatrix}
    \textbf{n} \\
    \textbf{m}
\end{pmatrix}_{i,j}\neq 0$ for some $1\leq i \neq j \leq N$, then we have \begin{equation*}
    d_{ij}(\textbf{m}-\textbf{n})(t^{q\textbf{m}/p}b)^{N_{0}''}\cdot V = N_{0}''\text{det} \begin{pmatrix}
        q\textbf{m}/p \\
        \textbf{n}
    \end{pmatrix}_{i,j}t^{\textbf{n}}b\cdot (t^{q\textbf{m}/p}b)^{N_{0}''-1}\cdot v.
\end{equation*}
By the minimality of $N_{0}''$, we get $0\neq v\in V$ such that $t^{\textbf{n}}b\cdot v=0$, therefore, by claim 1, $t^{\textbf{n}}b$ is nilpotent on $V$. Let $N_{0}\in \mathbb{Z}_{+}$ be the minimal such that $(t^{\textbf{n}}b)^{N_{0}}\cdot V=0$.  Consider \begin{equation*}
 0=   d_{ij}(\textbf{m}-\textbf{n})\cdot (t^{\textbf{n}}b)^{N_{0}}\cdot V = N_{0}\text{det}\begin{pmatrix}
        \textbf{n} \\
        \textbf{m}
    \end{pmatrix}_{i,j} t^{\textbf{m}}b \cdot (t^{\textbf{n}}b)^{N_{0}-1}\cdot v.
\end{equation*}
\\
By the minimality of $N_{0}$, we obtain that $t^{\textbf{m}}b$ is not injective, a contradiction. The claim therefore follows. 
\\
\textbf{Claim 3:} $ t^{\textbf{n}}b$ acts injectively on $V$ for all $\textbf{0}\neq \textbf{n}\in \mathbb{Z}^{N}$.  
\\ We first show that, using claim 2, it suffices to prove that $t^{\textbf{n}}b$ is injective for any $\textbf{n}\in \mathbb{Z}^{N}$ with $\text{det}\begin{pmatrix}
    \textbf{n} \\ \textbf{m}
\end{pmatrix}_{i,j}\neq 0$, for some $1\leq i\neq j\leq N$. If $ \text{det}\begin{pmatrix}
    \textbf{n} \\ \textbf{m}
\end{pmatrix}_{i,j}= 0$, for all $1\leq i\neq  j \leq N$, then one can verify that $\textbf{n}$ is a rational multiple of $\textbf{m}$ and hence in this case the claim follows from claim 2  . Since $\textbf{m}\neq 0 $, without loss of generality we can assume that $m_{1}\neq 0$ and  consider $\text{det}\begin{pmatrix}
    \textbf{n}\\\textbf{m}
\end{pmatrix}_{i,j} = 0$, $\forall \;  1\leq i \neq j \leq N$, which gives that $r_{j}(m_{1}, m_{j}) = (n_{1},n_{j})$, for some rational numbers $r_{j} $, $2\leq j \leq N$. Since $\textbf{n}\neq 0$, all $r_{i}$'s cannot be simultaneously zero, therefore $r_{i}=r\neq 0 $ for all $i\in \{1,\ldots,N\}$, and therefore it follows that $\textbf{n} = r\textbf{m}$ and hence in this case injectivity follows from claim 2.   
\par Let $\textbf{n}\in \mathbb{Z}^{N}$ with $ \text{det}\begin{pmatrix}
    \textbf{n} \\
    \textbf{m}
\end{pmatrix}_{i,j}\neq 0$, for some $i\neq j \in \{1,2,\ldots, N\}$ be such that $t^{\textbf{n}}b$ is locally nilpotent on $V$, then by Claim 1 there exists $k \in \mathbb{Z}_{+}$ such that $(t^{\textbf{n}}b)^{k}\cdot V =0$. 
 Since $V$ is uniformly bounded, we can assume that $\text{dim}V_{\Lambda}\geq \text{dim}V_{\Lambda +\textbf{r}}$, for every $\textbf{r}\in \mathbb{Z}^{N}$. Then we have $t^{\textbf{m}}bV_{\Lambda +i\textbf{m}}=V_{\Lambda + (i+1)\textbf{m}}$ and $\text{dim}V_{ \Lambda +i\textbf{m}}=\text{dim}V_{\Lambda}$, for every $i\in \mathbb{Z}$. Then for any $v\in V_{\Lambda-k\textbf{m}}$, we show that 
 \begin{equation}
(d_{ij}(\textbf{m}-\textbf{n}))^{k}(t^{\textbf{n}}b)^{k}\cdot v = k!\text{det}\begin{pmatrix}
    \textbf{n} \\
    \textbf{m}
\end{pmatrix}_{i,j}^{k}(t^{\textbf{m}}b)^{k}\cdot v +  t^{\textbf{n}}b\cdot u,
 \end{equation}
 for some $u\in V_{\Lambda-\textbf{n}}$. For $k=1$, we have \begin{equation*}
     d_{ij}(\textbf{m}-\textbf{n})t^{\textbf{n}}b\cdot v = \text{det}\begin{pmatrix}
         \textbf{n} \\
         \textbf{m}
     \end{pmatrix}_{i,j}t^{\textbf{m}}b\cdot v + t^{n}b\cdot d_{ij}({\textbf{m}-\textbf{n}})\cdot v.
 \end{equation*}
 Let the above expression be true for $k$, then for $v\in V_{\Lambda - (k+1)\textbf{m}}$, we have \begin{align}
     d_{ij}(\textbf{m}-\textbf{n})^{k+1}(t^{\textbf{n}}b)^{k+1}\cdot v &= d_{ij}(\textbf{m}-\textbf{n})^{k}d_{ij}(\textbf{m}-\textbf{n})(t^{\textbf{n}}b)^{k+1}\cdot v \\
     &=d_{ij}({\textbf{m}-\textbf{n}})^{k}((k+1)\text{det}\begin{pmatrix}
         \textbf{n} \\
         \textbf{m}
     \end{pmatrix}_{i,j} (t^{\textbf{n}}b)^{k}t^{\textbf{m}}b + (t^{\textbf{n}}b)^{k+1}d_{ij}(\textbf{m}-\textbf{n}))\cdot v.
 \end{align}
 We note that in the last expression $t^{\textbf{m}}b\cdot v \in V_{\Lambda -k\textbf{m}}$ and in $t^{\textbf{n}}b\cdot d_{ij}(\textbf{m}-\textbf{n})\cdot v \in V_{\Lambda -k\textbf{m}}$, therefore by induction we have
 \begin{equation}
     d_{ij}(\textbf{m}-\textbf{n})^{k}(t^{\textbf{n}}b)^{k}t^{\textbf{m}}b\cdot v = k!\text{det}\begin{pmatrix}
         \textbf{n} \\ \textbf{m}
     \end{pmatrix}_{i,j}^{k}(t^{\textbf{m}}b)^{k}\cdot t^{m}b\cdot v + t^{\textbf{n}}bu_{1},
 \end{equation}
 for some $u_{1}\in V_{\Lambda-\textbf{n}}$ and 
 \begin{equation}
     d_{ij}(\textbf{m}-\textbf{n})^{k}(t^{\textbf{n}}b)^k\cdot t^{\textbf{n}}b \cdot d_{ij}(\textbf{m}-\textbf{n})\cdot v= k1\text{det}\begin{pmatrix}
         \textbf{n} \\
         \textbf{m}
     \end{pmatrix}_{i,j}^{k}(t^{\textbf{m}}b)^{k}\cdot t^{\textbf{n}}b\cdot d_{ij}(\textbf{m}-\textbf{n})\cdot v + t^{\textbf{n}}u_{2},
 \end{equation}
 where $u_{2}\in V_{\Lambda-\textbf{n}}$, and $(t^{\textbf{m}}b)^{k}\cdot d_{ij}(\textbf{m}-\textbf{n})\cdot v\in V_{\Lambda-\textbf{n}}$, therefore, RHS of the expression (3.4.6) is of the form $t^{\textbf{n}}b\cdot u'$, for some $u'\in V_{\Lambda-\textbf{n}}$. Combining (3.4.4),(3.4.5) and (3,4,6), we obtain that
 \begin{equation}
     d_{ij}(\textbf{m}-\textbf{n})^{k+1}(t^{\textbf{n}}b)^{k+1}\cdot v = (k+1)!\text{det}\begin{pmatrix}
         \textbf{n} \\
         \textbf{m}
     \end{pmatrix}_{i,j}(t^{\textbf{n}}b)^{k+1}\cdot v + t^{n}(b)\cdot u.
 \end{equation}
 As have choosen $k$ such that $(t^{n}b)^{k}\cdot V=0$, therefore \begin{equation*}
  0= d_{ij}(\textbf{m}-\textbf{n})^{k}\cdot (t^{\textbf{n}}b)^{k}\cdot v =  k!\text{det}\begin{pmatrix}
         \textbf{n} \\
         \textbf{m}
     \end{pmatrix}_{i,j}^{k}(t^{\textbf{m}}b)^{k}\cdot v + t^{\textbf{n}}b\cdot u,
 \end{equation*}
 for some $u\in V_{\Lambda -\textbf{n}}$, therefore $t^{\textbf{n}}b: V_{\Lambda-\textbf{n}}\to V_{\Lambda}$ is surjective, and the fact $\text{dim}V_{\Lambda }\geq \text{dim}V_{\Lambda -\textbf{r}} $ gives that $t^{\textbf{n}}b: V_{\Lambda -\textbf{n}}\to V_{\Lambda }$ is bijective. Repeating this argument, we see that $t^{\textbf{n}}b: V_{\Lambda -(i+1)\textbf{n}}\to V_{\Lambda -i\textbf{n}}$ is bijective for every $i\in \mathbb{N}$. Therefore, $t^{\textbf{n}}b$ cannot be nilpotent on $V$, a contradiction.
\end{proof}
\begin{theorem}
    If $t^{\textbf{r}}b $ acts locally nilpotently for some $\textbf{n}\neq 0$, then $t^{\textbf{s}}\otimes bB\cdot V=0$ for every $\textbf{s}\in \mathbb{Z}^{N}\setminus\{\textbf{0}\}$ and hence $A'\otimes bB\cdot V=0$, where $bB$ is the principal ideal $<b>$ of $B$ generated by $b$ . 
\end{theorem}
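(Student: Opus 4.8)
The plan is to reduce the statement to the assertion that the common annihilator of the nonzero-degree part of the ideal is a nonzero submodule. First I would use the dichotomy of Proposition 3.4: since $t^{\mathbf r}\otimes b$ is locally nilpotent it is not injective, which rules out the injective alternative, so \emph{every} $t^{\mathbf m}\otimes b$ with $\mathbf m\neq\mathbf 0$ acts nilpotently on $V$. Exactly as in Claims 1 and 2 of that proof, combined with uniform boundedness, this local nilpotency is in fact global, so $(t^{\mathbf m}\otimes b)^{k_{\mathbf m}}V=0$. Note that $b=b\cdot 1\in bB$, so the desired conclusion already contains $t^{\mathbf s}\otimes b\cdot V=0$; thus the genuine content is to upgrade ``nilpotent'' to ``zero'' while simultaneously spreading the vanishing over the whole principal ideal $bB$.

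Next I would introduce the candidate submodule $W=\{v\in V:(t^{\mathbf s}\otimes bb')\cdot v=0 \text{ for all }\mathbf s\in\mathbb Z^{N}\setminus\{\mathbf 0\},\ b'\in B\}$, the annihilator in $V$ of $A_N'\otimes bB$, and verify it is a submodule. For $v\in W$ and a generator $D(u,\mathbf p)\otimes\beta$ with $(u|\mathbf p)=0$, the abelianness of the ideal gives $(t^{\mathbf s}\otimes bb')\cdot\big(D(u,\mathbf p)\otimes\beta\big)\cdot v=-(u|\mathbf s)\,t^{\mathbf s+\mathbf p}\otimes b(\beta b')\cdot v$; this vanishes because either $\mathbf s+\mathbf p\neq\mathbf 0$, so the factor $t^{\mathbf s+\mathbf p}\otimes b(\beta b')$ kills $v\in W$, or else $\mathbf p=-\mathbf s$, in which case the divergence-free condition $(u|\mathbf p)=0$ forces the coefficient $(u|\mathbf s)=0$. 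Stability under $A_N\otimes B$ is immediate since $A_N\otimes B$ is abelian and $t^{\mathbf 0}\otimes\gamma$ acts by the scalar $c\phi(\gamma)$. Hence $W$ is a submodule, and by irreducibility $W=V$ as soon as $W\neq 0$, which is precisely the claim $A_N'\otimes bB\cdot V=0$. I would emphasise that it is exactly the vanishing divergence of $\mathcal S_N$ that cancels the central term here, whereas the analogous $W$ for the extended Witt algebra fails to be a submodule.

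It therefore remains to prove $W\neq 0$, and this is where I expect the difficulty to concentrate. First I would promote the nilpotency from $b$ to all of $bB$: writing $t^{\mathbf s}\otimes bb'=\text{(nonzero constant)}\cdot[\,d_{ij}(\mathbf p)\otimes b',\,t^{\mathbf s-\mathbf p}\otimes b\,]$ for a suitable $\mathbf p$ with $p_i s_j-p_j s_i\neq 0$, and combining the dichotomy of Proposition 3.4 applied to $bb'$ with Lemma 3.3, one shows each $t^{\mathbf s}\otimes bb'$ acts locally nilpotently. Then I would pass to a weight space $V_\Lambda$ of maximal (finite) dimension: the degree-zero composites of the currents $t^{\pm\mathbf s}\otimes bb'$ preserve $V_\Lambda$, commute with one another, and are nilpotent, so by Engel's theorem they have a nonzero common kernel $K\subseteq V_\Lambda$. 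The hard step is to upgrade a vector $w\in K$, a priori annihilated only by the degree-zero composites, into an element of $W$, i.e.\ to show each individual weight-shifting current already kills $w$; I would attempt this through the determinant bracket relations $[d_{ij}(\mathbf p)\otimes b',\,t^{\mathbf s-\mathbf p}\otimes b]=c\,t^{\mathbf s}\otimes b'b$ together with the maximality of $\dim V_\Lambda$, running a descending induction on the nilpotency index in the spirit of the inductive identity and Claim 3 of the proof of Proposition 3.4. Passing from annihilation by degree-zero composites to annihilation by the individual currents, using only uniform boundedness and the $\mathcal S_N$-relations, is the main obstacle. Once $W\neq 0$ is secured, irreducibility gives $W=V$, so $t^{\mathbf s}\otimes bB\cdot V=0$ for every $\mathbf s\neq\mathbf 0$ and hence $A_N'\otimes bB\cdot V=0$.
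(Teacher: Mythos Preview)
Your architecture matches the paper's: define the common annihilator $W=\{v:A_N'\otimes bB\cdot v=0\}$ (the paper calls it $V'$), check it is a submodule, and conclude by irreducibility once $W\neq 0$. Your verification that $W$ is stable under $\mathcal S_N\otimes B$, using $(u|\mathbf s)=0$ when $\mathbf p=-\mathbf s$, is more explicit than the paper's one-line assertion. The divergence from the paper is entirely in the step $W\neq 0$.

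The paper does not first promote nilpotency from $b$ to $bb'$ and then invoke Engel on a weight space. Instead it runs a single inductive bracket computation directly on the relation $(t^{-\mathbf n}b)^{N'}(t^{\mathbf n}b)^{N'}V=0$: apply $d_{ij}(\mathbf m_1)b_1$ and then $t^{\mathbf n}b$ to obtain $t^{\mathbf m_1-\mathbf n}bb_1\,(t^{-\mathbf n}b)^{N'-1}(t^{\mathbf n}b)^{N'+1}V=0$, and iterate, at each step trading one factor of $t^{\pm\mathbf n}b$ for a factor $t^{\mathbf m_j-\mathbf n}bb_j$ with an arbitrary $b_j\in B$. After $3N'$ steps this yields $t^{\mathbf m_1-\mathbf n}bb_1\cdots t^{\mathbf m_{3N'}-\mathbf n}bb_{3N'}\cdot V=0$ for all $\mathbf m_i$ in general position relative to $\mathbf n$; since $\mathbf n$ was arbitrary, one gets $(A_N'\otimes bB)^{3N'}V=0$, and $W\neq 0$ follows at once. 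This bypasses both of the difficulties you flag.

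Your route to $W\neq 0$ has two genuine gaps. First, the ``promote nilpotency to $bb'$'' step does not follow from the bracket identity plus Lemma~3.3 as you sketch: from $t^{\mathbf s-\mathbf p}b\cdot w=0$ one only gets $t^{\mathbf s}bb'\cdot w=-c\,t^{\mathbf s-\mathbf p}b\cdot\big(d_{ij}(\mathbf p)b'\cdot w\big)$, which need not vanish, so you have no annihilated vector to feed into Lemma~3.3, and hence no direct access to the Proposition~3.4 dichotomy for $bb'$. Second, even granting nilpotency of every $t^{\mathbf s}\otimes bb'$, upgrading an Engel common kernel for the degree-zero composites to annihilation by each individual weight-shifting current is precisely the obstacle you acknowledge, and you give no mechanism for it; the paper's argument shows this can be avoided entirely by proving the $3N'$-fold product vanishing directly.
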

\begin{proof} As $t^{\textbf{r}}b$ is locally nilpotent on $V$, by Proposition 3.4, it follows that $t^{\textbf{n}}b$ is locally nilpotent on $V$ $\forall \:  \textbf{n} \in \mathbb{Z}^{N}\setminus \{\textbf{0}\}$. As $V$ is uniformly bounded weight module, there exists $N'\in \mathbb{Z}_{+}$ such that $(t^{-\textbf{n}}b\cdot t^{\textbf{n}}b)^{N'}\cdot V=0$, for every $\textbf{n}\in \mathbb{Z}^{N}\setminus \{\textbf{0}\}$.\\ 
For any $\textbf{m}\neq 0$ with $ \text{det} \begin{pmatrix}
    \textbf{n} \\ \textbf{m}
\end{pmatrix}_{i,j}\neq 0 $, for some $1\leq i\neq j \leq N$, and $v\in V$, we have
\begin{multline}
  0 = d_{ij}(\textbf{m})b_{1}(t^{-\textbf{n}}b\cdot t^{\textbf{n}}b)^{N'}= \text{det}\begin{pmatrix}
      \textbf{n}\\ \textbf{m}
  \end{pmatrix}_{i,j}N't^{\textbf{n}+\textbf{m}}bb_{1} \cdot (t^{\textbf{n}}b)^{N'-1}\cdot (t^{-\textbf{n}}b)^{N'}\cdot v
  \\ 
   - \text{det}\begin{pmatrix}
      \textbf{n}\\ \textbf{m}
  \end{pmatrix}_{i,j}N't^{\textbf{m}-\textbf{n}}bb_{1}\cdot(t^{-\textbf{n}}b)^{N'-1}\cdot (t^{\textbf{n}}b)^{N'}\cdot v.
\end{multline}
    Applying $t^{\textbf{n}}b$ to (3.5.8), we get 
    \begin{equation}
        t^{\textbf{m}-\textbf{n}}bb_{1}\cdot(t^{-\textbf{n}}b)^{N'-1}\cdot (t^{\textbf{n}}b)^{N'+1 }\cdot V =0, 
    \end{equation}
    $\forall \: \textbf{m}\in \mathbb{Z}^{N}\setminus\{\textbf{0}\} \: \text{with} \: \text{det}\begin{pmatrix}
            \textbf{n} \\ \textbf{m}
        \end{pmatrix}_{i,j}\neq 0, \: \text{for some} \:  1\leq i\neq j \leq N$. For $1\leq j \leq N$, we show that 
        \begin{equation}
            t^{\textbf{m}_{1}-\textbf{n}}bb_{1}\cdots t^{\textbf{m}_{j}-\textbf{n}}bb_{j}(t^{-\textbf{n}}b)^{N-j}\cdot (t^{\textbf{n}}b)^{N+j}\cdot V = 0, 
        \end{equation}
         for $\textbf{m}_{1},...,\textbf{m}_{j}\in \mathbb{Z}^{N}\setminus\{\textbf{0}\}$ with $\sum_{i=1}^{j}\epsilon_{i}\textbf{m}_{i}\notin \mathbb{Q}\textbf{n}$, for all $\epsilon_{i} \in \{0,1\}$. For $j=1$, we have already done (3.5.9). Suppose that (3.5.10) is true for some $j$ such that $1\leq j < N$. Then for any $\textbf{m}_{j+1}\in \mathbb{Z}^{N}\setminus\{\textbf{0}\}$ with $\textbf{m}_{j+1}+\sum_{i=1}^{j}\epsilon_{i}\textbf{m}_{i} \notin \mathbb{Q}\textbf{n}$, we have 
          \begin{align*}
              d_{ij}(\textbf{m}_{j+1})b_{j+1}\cdot t^{\textbf{m}_{1}-\textbf{n}}bb_{1}\cdots t^{\textbf{m}_{j}-\textbf{n}}bb_{j}(t^{-\textbf{n}}b)^{N-j}(t^{\textbf{n}}b)^{N+j}\cdot V=0, 
              \end{align*}
      which gives \begin{align*}
          -(N-j)\text{det}\begin{pmatrix}
      \textbf{n}\\ \textbf{m}
  \end{pmatrix}_{i,j}t^{\textbf{m}_{j+1}-\textbf{n}}bb_{j+1}t^{\textbf{m}_{1}-\textbf{n}}bb_{1}\cdots t^{\textbf{m}_{j}-\textbf{n}}bb_{j}(t^{-\textbf{n}}b)^{N'-j-1}(t^{\textbf{n}}b)^{N'+j}\cdot V \\ + (N+j)\text{det} \begin{pmatrix}
      \textbf{n}\\ \textbf{m}
  \end{pmatrix}_{i,j}t^{\textbf{m}_{j+1}-\textbf{n}}bb_{j+1}t^{\textbf{m}_{1}-\textbf{n}}bb_{1}\cdots t^{\textbf{m}_{j}-\textbf{n}}bb_{j}(t^{-\textbf{n}}b)^{N'-j}(t^{\textbf{n}}b)^{N'+j-1}=0,
      \end{align*}
      \\
      Operating $t^{\textbf{n}}b$ to equation above and using (3.5.10), it follows that 
      \begin{equation*}
          t^{\textbf{m}_{1}-\textbf{n}}bb_{1}\cdots t^{\textbf{m}_{j+1}-\textbf{n}}bb_{j+1}(t^{-\textbf{n}}b)^{N-(j+1)}(t^{\textbf{n}}b)^{N+j+1}\cdot V=0,
      \end{equation*}
      for every $\textbf{m}_{1}, \textbf{m}_{2},\ldots, \textbf{m}_{j+1} \in \mathbb{Z}^{N} $ with $\sum_{i=1}^{j+1}\epsilon_{i}\textbf{m}_{i} \notin \mathbb{Q}\textbf{n}$. Therefore, equation (3.5.10) follows by induction.  \\
      Setting $j=N$ in (3.5.10), we obtain: $ t^{\textbf{m}_{1}-\textbf{n}}bb_{1}\cdots t^{\textbf{m}_{N}-\textbf{n}}bb_{N}\cdot (t^{\textbf{n}}b)^{2N}\cdot V=0$. \par
        Similarly, we can prove for $1\leq j \leq 2N$ that 
        \begin{multline}
            t^{\textbf{m}_{1}-\textbf{n}}bb_{1}\cdots t^{\textbf{m}_{N+j}-\textbf{n}}bb_{N+j}\cdot (t^{\textbf{n}}b)^{2N-j}\cdot V=0, \: \\  \forall \: \textbf{m}_{1}, \textbf{m}_{2}, \ldots, \textbf{m}_{N+j} \in \mathbb{Z}^{N}, \sum_{i=1}^{N+j} \epsilon_{i}\textbf{m}_{i} \notin \mathbb{Q}\textbf{n}.
            \end{multline}
        Substituting $j=2N$ into equation (3.5.11), we obtain: 
        \begin{equation}
            t^{\textbf{m}_{1}-\textbf{n}}bb_{1}\cdots t^{\textbf{m}_{3N}-\textbf{n}}bb_{3N}\cdot V =0, \forall \:  \sum_{i=1}^{3N} \epsilon_{i}\textbf{m}_{i} \notin \mathbb{Q}\textbf{n}.
        \end{equation}
        For any $\textbf{n}_{1},\ldots, \textbf{n}_{3N}\in \mathbb{Z}^{N}$, we choose $\textbf{n}\in \mathbb{Z}^{N}\setminus \{ \sum_{i=1}^{3N} \epsilon_{i} \textbf{n}_{i}, \epsilon_{i}=0 \:\text{or} \: 1 \}$ and let $\textbf{m}_{i}=\textbf{n}+\textbf{n}_{i}$, $ \forall \: 1\leq i \leq 3N$, we get that $t^{\textbf{n}_{1}}bb_{1}\cdots t^{\textbf{n}_{3N}}bb_{3N}\cdot V=0 $, and hence $(A'\otimes bB)^{3N}\cdot V =0$. Observe that $V'= \{v \in V \mid A'\otimes bB \cdot V =0 \}$ is a non-zero $(\mathcal{S}_{N}\ltimes A_{N})\otimes B$-submodule of $ V$ and, by the irreducibility of $V$, we have $V'=V$.
\end{proof}
As the action of $A_{N}'= A_{N}'\otimes 1$ is nontrivial, Proposition 3.4 and Theorem 3.5 imply that $ t^{\textbf{r}} $ is injective for all $\textbf{0}\neq\textbf{r}\in \mathbb{Z}^{N}$. Consequently, we have $\text{dim}(V_{\Lambda +\textbf{m}})=\text{dim}(V_{\Lambda +\textbf{n}})=m, \: \forall \: \textbf{m},\textbf{n}\in \mathbb{Z}^{N}$. Let $\{ v_{1}, v_{2},\ldots, v_{m}\}$ be a basis for $V_{\Lambda }$ and $t^{\textbf{r}}\cdot v_{i} = v_{i}(\textbf{r})(\: \forall \: 1\leq i \leq m, \: \: \textbf{r}\neq \textbf{0} )$, therefore $\{v_{i}(\textbf{r})\}_{i=1}^{m}$ is a basis for $ V_{\Lambda +\textbf{r}}$, $\textbf{r} \neq \textbf{0}$. Now for $b\in B$, the $t^{\textbf{r}}b\cdot(v_{i})_{i=1}^{m}= (v_{i}(\textbf{r}))\textbf{B}_{\textbf{r}, b}$, if $\lambda_{\textbf{r}, b}$ is an eigenvalue of $B_{\textbf{r}, b}$, then $(t^{\textbf{r}}b-\lambda_{\textbf{r},b}t^{\textbf{r}})\cdot v =0$, for some $0\neq v \in V_{\Lambda}$, Lemma 3.3 implies that $t^{\textbf{r}}b-\lambda_{\textbf{r},b}t^{\textbf{r}}$ is locally nilpotent on $V$. For any $k\in \mathbb{N}$, $(t^{\textbf{r}}b -\lambda_{\textbf{r},b}t^{\textbf{r}})^{k}(v_{i})_{i=1}^{m}=(t^{\textbf{r}})^{k}(v_{i})_{i=1}^{m}(\textbf{B}_{\textbf{r},b}-\lambda_{\textbf{r},b}I)^{k}  $ and since $t^{\textbf{r}}$ is injective, it follows that $ \textbf{B}_{\textbf{r},b}-\lambda_{\textbf{r},b}\textbf{I}$  is a nilpotent matrix. As a consequence, all the eigenvalues of $t^{\textbf{r}}b$ are the same. It is easy to see that $\lambda_{\textbf{r},1}=1$. The following lemma shows that for any $\textbf{r}\neq 0$, the assignment $b \mapsto \lambda_{\textbf{r},b}$ defines a linear map from $B$ to $\mathbb{C}$.
\begin{lemma}
    For any $\textbf{r}\neq 0$, the map $b\to \lambda_{\textbf{r},b}$ is a linear map from $B$ to $\mathbb{C}$.
\end{lemma}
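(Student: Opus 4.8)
The plan is to realize $\lambda_{\textbf{r},b}$ as a normalized trace of the matrix $\textbf{B}_{\textbf{r},b}$ and then read off linearity from linearity of the trace. Fix $\textbf{r}\neq\textbf{0}$. Recall that, with respect to the fixed basis $\{v_i\}_{i=1}^{m}$ of $V_{\Lambda}$ and the associated basis $\{v_i(\textbf{r})\}_{i=1}^{m}$ of $V_{\Lambda+\textbf{r}}$, the operator $t^{\textbf{r}}b\colon V_{\Lambda}\to V_{\Lambda+\textbf{r}}$ is represented by $\textbf{B}_{\textbf{r},b}$. First I would observe that $b\mapsto \textbf{B}_{\textbf{r},b}$ is a \emph{linear} map from $B$ into the space of $m\times m$ matrices: since $t^{\textbf{r}}\otimes(\alpha b_{1}+\beta b_{2})=\alpha\,(t^{\textbf{r}}\otimes b_{1})+\beta\,(t^{\textbf{r}}\otimes b_{2})$ and the two bases are held fixed, we obtain $\textbf{B}_{\textbf{r},\alpha b_{1}+\beta b_{2}}=\alpha\,\textbf{B}_{\textbf{r},b_{1}}+\beta\,\textbf{B}_{\textbf{r},b_{2}}$.

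The key step is the structural fact recorded just before the statement: $\textbf{B}_{\textbf{r},b}-\lambda_{\textbf{r},b}\textbf{I}$ is nilpotent, so every eigenvalue of $\textbf{B}_{\textbf{r},b}$ equals $\lambda_{\textbf{r},b}$. Since a nilpotent matrix has trace zero, this gives $\text{tr}(\textbf{B}_{\textbf{r},b})=m\,\lambda_{\textbf{r},b}$, that is,
\begin{equation*}
\lambda_{\textbf{r},b}=\frac{1}{m}\,\text{tr}(\textbf{B}_{\textbf{r},b}),\qquad m=\dim V_{\Lambda}\geq 1.
\end{equation*}
Combining this identity with the linearity of $b\mapsto \textbf{B}_{\textbf{r},b}$ and the linearity of the trace functional, the map $b\mapsto\lambda_{\textbf{r},b}$ is a composition of linear maps, hence linear; the normalization $\lambda_{\textbf{r},1}=1$ is consistent with $\textbf{B}_{\textbf{r},1}=\textbf{I}$.

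I expect essentially no serious obstacle here once the trace identity is noticed. The only points requiring care are that the matrix must be taken with respect to one fixed pair of bases, so that $b\mapsto\textbf{B}_{\textbf{r},b}$ is genuinely linear, and that each difference $\textbf{B}_{\textbf{r},b}-\lambda_{\textbf{r},b}\textbf{I}$ is nilpotent (and therefore trace-free), which is exactly what licenses reading $\lambda_{\textbf{r},b}$ off as the normalized trace. An equivalent route avoiding the trace would write $\textbf{B}_{\textbf{r},b_{i}}=\lambda_{\textbf{r},b_{i}}\textbf{I}+N_{i}$ with $N_{i}$ nilpotent, add these, and compare against the known decomposition of $\textbf{B}_{\textbf{r},b_{1}+b_{2}}=\lambda_{\textbf{r},b_{1}+b_{2}}\textbf{I}+N_{12}$; taking traces (so that the nilpotent parts drop out) forces $\lambda_{\textbf{r},b_{1}+b_{2}}=\lambda_{\textbf{r},b_{1}}+\lambda_{\textbf{r},b_{2}}$, and homogeneity is handled the same way, which is again the trace computation above.
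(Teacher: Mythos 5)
Your argument is correct, and it takes a genuinely different route from the paper. The paper's proof first observes that the operators $t^{\textbf{r}}b$, $b\in B$, commute (so the matrices $\textbf{B}_{\textbf{r},b}$ form an abelian subalgebra of $\mathfrak{gl}_{m}$), extracts a \emph{common} eigenvector $v\in V_{\Lambda}$ with $t^{\textbf{r}}b\cdot v=\lambda_{\textbf{r},b}\,t^{\textbf{r}}\cdot v$ for all $b$ simultaneously, and then gets additivity by evaluating $t^{\textbf{r}}(b_{1}+b_{2})$ at $v$ and cancelling $t^{\textbf{r}}\cdot v$ via the injectivity of $t^{\textbf{r}}$. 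You instead exploit the fact, established just before the lemma, that $\textbf{B}_{\textbf{r},b}-\lambda_{\textbf{r},b}\textbf{I}$ is nilpotent, to write $\lambda_{\textbf{r},b}=\tfrac{1}{m}\operatorname{tr}(\textbf{B}_{\textbf{r},b})$ and conclude by linearity of $b\mapsto\textbf{B}_{\textbf{r},b}$ and of the trace. Both arguments hinge on the same prior input (all eigenvalues of $\textbf{B}_{\textbf{r},b}$ coincide, which is what makes $\lambda_{\textbf{r},b}$ well defined and, in the paper's version, identifies the common eigenvalue at $v$ with $\lambda_{\textbf{r},b}$), but your trace identity dispenses with the common-eigenvector step and with the use of injectivity of $t^{\textbf{r}}$ at this stage, so it is the more economical of the two; the paper's version has the side benefit of producing an explicit simultaneous eigenvector, though that is not reused later since Proposition 3.7 re-derives the pointwise identity $t^{\textbf{r}}b\cdot v=\lambda_{\textbf{r},b}t^{\textbf{r}}\cdot v$ on all of $V$ by other means. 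Your cautionary remarks --- that the matrices must be taken with respect to one fixed pair of bases $\{v_{i}\}$ and $\{v_{i}(\textbf{r})\}$, and that nilpotence is what licenses reading $\lambda_{\textbf{r},b}$ off the trace --- are exactly the right points to flag, and $m=\dim V_{\Lambda}\geq 1$ holds since $\Lambda$ is a weight. No gap.
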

\begin{proof}
    Consider the Lie algebra generated by the operators $t^{\textbf{r}}b, b\in B $, these operators determine a family of matrices $\textbf{B}_{\textbf{r},b}$, which commute with each other and hence form an abelian subalgebra of $\mathfrak{gl}_{m}$, therefore there exists a common eigenvector $v$ of the family of matrices $\textbf{B}_{\textbf{r}, b}$. In other words, there is a nonzero vector $v\in V_{\Lambda}$ such that $t^{\textbf{r}}b\cdot v = \lambda_{\textbf{r}, b}t^{\textbf{r}}\cdot v, \: \forall \: b\in B$. \par
   Let $v$ be a common eigen vector of the family of matrices $ \textbf{B}_{\textbf{r},b}$,  and consider $\lambda_{\textbf{r},b_{1}+b_{2}}t^{\textbf{r}}\cdot v = t^{\textbf{r}}(b_{1}+b_{2})\cdot v = t^{\textbf{r}}b_{1}\cdot v + t^{\textbf{r}}b_{2}\cdot v = (\lambda_{\textbf{r},b_{1}}+\lambda_{\textbf{r},b_{2}})t^{\textbf{r}}\cdot v$. \\
    Since $t^{\textbf{r}}$ is injective, it follows that $\lambda_{\textbf{r},b_{1}+b_{2}} = \lambda_{\textbf{r},b_{1}} + \lambda_{\textbf{r}, b_{2}}$. Similarly, one can deduce that $\lambda_{\textbf{r},cb}= c\lambda_{\textbf{r},b}$.
    
\end{proof}
        The following important proposition shows that $\lambda_{\textbf{r},b} = \lambda_{\textbf{s}, b} $
\begin{proposition}
    There exists an algebra homomorphism $\psi: B \to \mathbb{C}$ such that $t^{\textbf{r}}b \cdot v = \psi(b)t^{\textbf{r}}\cdot v$ for every $\textbf{r}\neq \textbf{0}$.
    
\end{proposition}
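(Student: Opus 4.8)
The plan is to first manufacture the candidate homomorphism and to show that each $t^{\textbf{s}}\otimes b$ acts as an honest scalar, \emph{independent of the degree} $\textbf{s}$, and only afterwards to verify multiplicativity. Fix once and for all a nonzero $\textbf{r}$ and set $\psi(b):=\lambda_{\textbf{r},b}$. By Lemma 3.6 this $\psi$ is linear, and $\psi(1)=\lambda_{\textbf{r},1}=1$, so what remains is to upgrade ``$t^{\textbf{s}}\otimes b$ has a single eigenvalue'' to ``$t^{\textbf{s}}\otimes b$ acts as the scalar $\psi(b)$ for every $\textbf{s}\neq\textbf{0}$'', and then to prove $\psi(b_{1}b_{2})=\psi(b_{1})\psi(b_{2})$.

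For the first point I would feed a shifted element into Theorem 3.5. Put $b'=b-\psi(b)\cdot 1\in B$. Then $t^{\textbf{r}}\otimes b'=t^{\textbf{r}}b-\psi(b)\,t^{\textbf{r}}$ is exactly the operator shown to be locally nilpotent just before Lemma 3.6 (it annihilates a common eigenvector in $V_{\Lambda}$, so Lemma 3.3 applies). Hence Theorem 3.5 yields $t^{\textbf{s}}\otimes b'B\cdot V=0$ for every $\textbf{s}\neq\textbf{0}$; taking the generator $1\in B$ gives $t^{\textbf{s}}\otimes b'\cdot V=0$, that is $t^{\textbf{s}}b\cdot v=\psi(b)\,t^{\textbf{s}}\cdot v$ for all $v\in V$ and all $\textbf{s}\neq\textbf{0}$. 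This single step simultaneously proves degree-independence ($\lambda_{\textbf{s},b}=\psi(b)$ for every $\textbf{s}\neq\textbf{0}$) and that the action is scalar rather than merely unipotent; I regard this reduction as the crux of the argument.

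With the scalar action in hand, multiplicativity becomes a commutator computation. For $b_{1},b_{2}\in B$ I would choose (using $N\geq 2$) linearly independent $\textbf{s},\textbf{p}\in\mathbb{Z}^{N}\setminus\{\textbf{0}\}$ with $\textbf{p}+\textbf{s}\neq\textbf{0}$ and $u\in\mathbb{C}^{N}$ satisfying $(u|\textbf{p})=0$ and $(u|\textbf{s})\neq0$, so that $D(u,\textbf{p})\in\mathcal{S}_{N}$. Applying the bracket identity $[\,D(u,\textbf{p})\otimes b_{1},\,t^{\textbf{s}}\otimes b_{2}\,]=(u|\textbf{s})\,t^{\textbf{p}+\textbf{s}}\otimes b_{1}b_{2}$ to an arbitrary $v$ and expanding the left-hand side, both occurrences of $t^{\textbf{s}}\otimes b_{2}$ may be replaced by $\psi(b_{2})\,t^{\textbf{s}}$ (valid since $\textbf{s}\neq\textbf{0}$), so the left side collapses to $\psi(b_{2})\,[\,D(u,\textbf{p})\otimes b_{1},\,t^{\textbf{s}}\,]\cdot v=\psi(b_{2})(u|\textbf{s})\,t^{\textbf{p}+\textbf{s}}b_{1}\cdot v=\psi(b_{1})\psi(b_{2})(u|\textbf{s})\,t^{\textbf{p}+\textbf{s}}\cdot v$, the last equality using the scalar action at $\textbf{p}+\textbf{s}\neq\textbf{0}$. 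The right-hand side equals $\psi(b_{1}b_{2})(u|\textbf{s})\,t^{\textbf{p}+\textbf{s}}\cdot v$. Since $(u|\textbf{s})\neq0$ and $t^{\textbf{p}+\textbf{s}}$ is injective, comparing the two expressions forces $\psi(b_{1}b_{2})=\psi(b_{1})\psi(b_{2})$.

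Combining these steps, $\psi$ is an algebra homomorphism with $\psi(1)=1$ and $t^{\textbf{r}}b=\psi(b)\,t^{\textbf{r}}$ for every $\textbf{r}\neq\textbf{0}$, which is the assertion. The only genuine obstacle is the reduction in the second paragraph: absent the idea of applying the annihilation theorem to $b-\psi(b)\cdot 1$, one is forced to rule out nontrivial Jordan blocks of $\textbf{B}_{\textbf{r},b}$ and to transport the eigenvalue across all degrees by direct commutator bookkeeping, which is considerably messier. Once the scalar action is secured the multiplicativity step is routine, the only care being the harmless genericity choice of $\textbf{s},\textbf{p},u$, which is automatic for $N\geq2$.
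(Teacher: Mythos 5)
Your proof is correct, and its crux coincides with the paper's: both arguments apply Theorem 3.5 to the shifted element $t^{\textbf{r}}\otimes(b-\lambda_{\textbf{r},b}1)$, which is locally nilpotent by Lemma 3.3, to conclude that $t^{\textbf{s}}\otimes(b-\lambda_{\textbf{r},b}1)B$ annihilates $V$ for every $\textbf{s}\neq\textbf{0}$. Where you diverge is in how the two conclusions are extracted from this. The paper keeps the full ideal statement, obtaining $t^{\textbf{s}}bb'=\lambda_{\textbf{r},b}\,t^{\textbf{s}}b'$ for all $b'\in B$; combined with $t^{\textbf{s}}b'=\lambda_{\textbf{s},b'}t^{\textbf{s}}$ and injectivity of $t^{\textbf{s}}$ this yields $\lambda_{\textbf{s},bb'}=\lambda_{\textbf{r},b}\lambda_{\textbf{s},b'}$ in one stroke, from which degree-independence (set $b'=1$) and multiplicativity both follow --- no element of $\mathcal{S}_{N}$ is needed. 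You instead specialize immediately to $b'=1$, which gives the scalar action and degree-independence, and then recover multiplicativity separately by bracketing with $D(u,\textbf{p})\otimes b_{1}$ for a generic choice of $\textbf{p},\textbf{s},u$ with $(u|\textbf{p})=0$, $(u|\textbf{s})\neq 0$, $\textbf{p}+\textbf{s}\neq\textbf{0}$; that computation is valid (the operator identity $t^{\textbf{s}}b_{2}=\psi(b_{2})t^{\textbf{s}}$ holds on all of $V$, and injectivity of $t^{\textbf{p}+\textbf{s}}$ is available from the discussion following Theorem 3.5), though it needs $N\geq 2$ and the mild genericity you note. The paper's route is slightly more economical because the multiplicative relation is already encoded in the ideal $bB$ appearing in Theorem 3.5; your route has the small advantage of making the scalar action on all of $V$ explicit before any multiplicativity is discussed, and of not relying on the $b'$-uniformity of the ideal statement beyond $b'=1$.
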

\begin{proof}
    Note that for any $\textbf{r}\neq 0$ and $b\in B$, using Lemma 3.3, the operator $t^{\textbf{r}}b-\lambda_{\textbf{r},b} t^{\textbf{r}}=t^{\textbf{r}}(b-\lambda_{\textbf{r}, b}1)$ is locally nilpotent on $V$. Theorem 3.5 implies that $A'\otimes(b-\lambda_{\textbf{r},b}1)b'\cdot V =0$ and hence \begin{equation*}t^{\textbf{s}}(bb' -\lambda_{\textbf{r},b})b'.v =0, \forall \:\textbf{0}\neq \textbf{s}, b,b'\in B, \: v \in V. \end{equation*}                              Then, \begin{equation}
        t^{\textbf{s}}bb' = \lambda_{\textbf{r}, b}t^{\textbf{s}}b' \; \text{on} \;  V,\; \forall b,b'\in B, \textbf{r},\textbf{s}\in \mathbb{Z}^{N}\setminus\{\textbf{0}\}.
    \end{equation}
    In particular, for $\textbf{s}=\textbf{r}$ and $b'=1$, we have 
    
    \begin{equation}t^{\textbf{r}}b\cdot v =\lambda_{\textbf{r},b}t^{\textbf{r}}\cdot v, \; \forall \:  \textbf{r} \in \mathbb{Z}^{N}\setminus\{\textbf{0}\}, b\in B,\:  v\in V. \end{equation} 
    Using (3.7.13) and (3.7.14), we get
    \begin{equation}
        \lambda_{\textbf{s},bb' }t^{\textbf{s}}\cdot v = \lambda_{\textbf{r},b}\lambda_{\textbf{s},b'}t^{\textbf{s}}\cdot v, \; \forall \; b,b'\in B, \textbf{r},\textbf{s}\in \mathbb{Z}^{N}\setminus\{\textbf{0}\}, v\in V.
    \end{equation}\
    Injectivity of $t^{\textbf{s}}$ gives that $ \lambda_{\textbf{s},bb'}= \lambda_{\textbf{r},b}\lambda_{\textbf{s},b'}$, on putting $b'=1$ and using the fact that $\lambda_{\textbf{s},1}=1$, using (3.7.14) and (3.7.15), we obtain 
    \begin{equation} \lambda_{\textbf{r},b}=\lambda_{\textbf{s},b}, \: \forall \:  \textbf{r}, \textbf{s} \in  \mathbb{Z}^{N} \setminus \{\textbf 0\},\end{equation} Therefore, we have shown that the linear maps $\lambda_{\textbf{r},b}$ are the same for all $\textbf{r}\neq \textbf{0}$, and we use the notation $\psi(b)$ for $\lambda_{\textbf{r},b}$.  Finally, we get that 
    \begin{equation}
        \psi(bb')=\psi(b)\psi(b').
    \end{equation}
\end{proof}

Now we consider the composites of the operators $t^{\textbf{r}}b$ while acting on $V$ and consider the quasi-associativity of the $A\otimes B$-action on $V$. Let $\lambda_{\textbf{r},\textbf{s}, b, b'}$ be the eigenvalue of matrix $\textbf{B}_{\textbf{m},\textbf{n},b,b'} $ defined by \begin{equation*}
    t^{\textbf{m}}b\cdot t^{\textbf{n}}b'\cdot (v_{i})_{i=1}^{m} = (v_{i}(\textbf{m}+\textbf{n}))_{i=1}^{m}\textbf{B}_{\textbf{m},\textbf{n},b,b'}.
    \end{equation*}
It is evident that $\textbf{B}_{\textbf{m},\textbf{n},b,b'}= \begin{cases}
    \psi(bb')\textbf{B}_{\textbf{m},\textbf{n}}, \;\; \; \; \; \; \; \;   \text{if}\; \; \textbf{m},\textbf{n} \neq \textbf{0} \\
    \psi(b)\phi(b') \textbf{B}_{\textbf{m},\textbf{n}}, \; \; \; \text{if} \;  \textbf{m}\neq 0 \: \text{and} \: \textbf{n}=\textbf{0}, \\
    \phi(b)\phi(b')\textbf{B}_{\textbf{m},\textbf{n}}, \; \; \;   \text{if} \; \; \textbf{m}=\textbf{n}=\textbf{0},
    
\end{cases}$ \\
where $\textbf{B}_{\textbf{m},\textbf{n}}=\textbf{B}_{\textbf{m},\textbf{n},1,1}$. If $\lambda_{\textbf{m},\textbf{n}}=\lambda_{\textbf{m},\textbf{n},1,1}$, then $\lambda_{\textbf{m},\textbf{n},b,b'}= \begin{cases}
    \psi(bb')\lambda_{\textbf{m},\textbf{n}}, \; \text{if} \; \textbf{m},\textbf{n}\neq \textbf{0},  \\
    \psi(b)\phi(b')\lambda_{\textbf{m}, \textbf{n}}, \; \textbf{} \; \textbf{m}\neq \textbf{0}, \textbf{n}= \textbf{0},  \; 
    \\
    \phi(b)\phi(b')\lambda_{\textbf{m},\textbf{n}}, \; \text{if} \; \textbf{m}=\textbf{n}=\textbf{0}. \\
    
\end{cases}$ \\
Now we have the following theorem:
\begin{theorem} $t^{\textbf{m}}b\cdot t^{\textbf{n}}b' = \lambda_{\textbf{m},\textbf{n},b,b'}t^{\textbf{m}+\textbf{n}}$ on $V$, for every $ \textbf{m},\textbf{n}\in \mathbb{Z}^{N}$.
\end{theorem}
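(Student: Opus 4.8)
The plan is to reduce the whole statement to the single identity $t^{\textbf{m}}t^{\textbf{n}}=\lambda_{\textbf{m},\textbf{n}}t^{\textbf{m}+\textbf{n}}$ on $V$ (the case $b=b'=1$) and then prove that. First I would invoke Proposition 3.7, which gives $t^{\textbf{m}}b=\psi(b)t^{\textbf{m}}$ \emph{as operators} for every $\textbf{m}\neq\textbf{0}$, together with the fact that $t^{\textbf{0}}b$ acts by the scalar $c\phi(b)$. Substituting these into the composite $t^{\textbf{m}}b\cdot t^{\textbf{n}}b'$ reproduces exactly the three-case description of $\textbf{B}_{\textbf{m},\textbf{n},b,b'}$ recorded just before the theorem, so the theorem for general $b,b'$ follows once the case $b=b'=1$ is settled. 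The degenerate subcases $\textbf{m}=\textbf{0}$ or $\textbf{n}=\textbf{0}$ are immediate, since $t^{\textbf{0}}=c\,\mathrm{Id}$ forces $t^{\textbf{0}}t^{\textbf{n}}=c\,t^{\textbf{n}}$ and $t^{\textbf{m}}t^{\textbf{0}}=c\,t^{\textbf{m}}$.

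For the main case I would first record two structural facts. Since $A_{N}$ is abelian and $V$ is a module, the operators $\{t^{\textbf{p}}\}_{\textbf{p}\in\mathbb{Z}^{N}}$ pairwise commute, hence each $t^{\textbf{p}}$ commutes with $X_{\textbf{m},\textbf{n}}:=t^{\textbf{m}}t^{\textbf{n}}-\lambda_{\textbf{m},\textbf{n}}t^{\textbf{m}+\textbf{n}}$. By the definition of the eigenvalue $\lambda_{\textbf{m},\textbf{n}}$ of $\textbf{B}_{\textbf{m},\textbf{n}}$, the operator $X_{\textbf{m},\textbf{n}}$ annihilates a nonzero vector of $V_{\Lambda}$, so by Lemma 3.3 it is locally nilpotent on $V$. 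To see that \emph{all} eigenvalues of $\textbf{B}_{\textbf{m},\textbf{n}}$ coincide, I would treat $\textbf{m}+\textbf{n}\neq\textbf{0}$ by noting that $T:=t^{\textbf{m}+\textbf{n}}$ is a bijection between graded components (injective, with all weight spaces of the common dimension $m$) commuting with $X_{\textbf{m},\textbf{n}}$. Factoring $X_{\textbf{m},\textbf{n}}=T\,Y$ on $V_{\Lambda}$ with $Y:=T^{-1}X_{\textbf{m},\textbf{n}}\in\mathrm{End}(V_{\Lambda})$, the relation $X_{\textbf{m},\textbf{n}}T=TX_{\textbf{m},\textbf{n}}$ yields $X_{\textbf{m},\textbf{n}}^{k}|_{V_{\Lambda}}=T^{k}Y^{k}$; local nilpotency and injectivity of $T^{k}$ then force $Y$ nilpotent, i.e. $\textbf{B}_{\textbf{m},\textbf{n}}-\lambda_{\textbf{m},\textbf{n}}\textbf{I}$ is nilpotent. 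When $\textbf{m}+\textbf{n}=\textbf{0}$ the operator $X_{\textbf{m},\textbf{n}}$ has degree $\textbf{0}$, hence preserves each finite-dimensional weight space and is nilpotent there directly. In all cases we obtain $\textbf{B}_{\textbf{m},\textbf{n}}=\lambda_{\textbf{m},\textbf{n}}\textbf{I}+\textbf{N}_{\textbf{m},\textbf{n}}$ with $\textbf{N}_{\textbf{m},\textbf{n}}$ nilpotent.

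The hard part will be showing that the nilpotent part $\textbf{N}_{\textbf{m},\textbf{n}}$ vanishes, and I expect this to be the main obstacle. This cannot come from the $A_{N}$-structure alone: a commuting family of graded bijections can realise non-scalar unipotent $\textbf{B}_{\textbf{m},\textbf{n}}$ (for example the $t^{\textbf{p}}$ could act through a fixed unipotent matrix depending non-additively on $\textbf{p}$, which is commutative yet not quasi-associative), so the $\mathcal{S}_{N}$-action must enter essentially. The lever is the operator identity $[d_{ab}(\textbf{s}),t^{\textbf{p}}]=\det\begin{pmatrix}\textbf{p}\\\textbf{s}\end{pmatrix}_{a,b}t^{\textbf{s}+\textbf{p}}$, which upon comparing matrices on $V_{\Lambda}$ ties the matrices of the $\mathcal{S}_{N}$-generators to the $\textbf{B}_{\textbf{p},\textbf{s}}$, and produces the recursion
\[
[d_{ab}(\textbf{s}),X_{\textbf{m},\textbf{n}}]=\det\begin{pmatrix}\textbf{m}\\\textbf{s}\end{pmatrix}_{a,b}X_{\textbf{s}+\textbf{m},\textbf{n}}+\det\begin{pmatrix}\textbf{n}\\\textbf{s}\end{pmatrix}_{a,b}X_{\textbf{m},\textbf{s}+\textbf{n}}+(\text{scalar})\,t^{\textbf{s}+\textbf{m}+\textbf{n}}.
\]

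I would then use this recursion, together with injectivity of the $t^{\textbf{r}}$ and irreducibility of $V$, to propagate the scalar relation across all degrees and conclude $\textbf{N}_{\textbf{m},\textbf{n}}=0$, following the scheme of [20]; the remaining $\textbf{m}+\textbf{n}=\textbf{0}$ case is then reduced to products with nonzero sum by the same commutators. It is worth emphasising that the shortcut used to upgrade Lemma 3.6 to Proposition 3.7 (namely Theorem 3.5) is \emph{not} available here, because with $b=1$ the operators $t^{\textbf{r}}$ are injective rather than locally nilpotent; this is precisely why the elimination of $\textbf{N}_{\textbf{m},\textbf{n}}$ via the $\mathcal{S}_{N}$-commutators is the technical heart of the theorem.
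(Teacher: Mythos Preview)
Your reduction to $b=b'=1$ via Proposition~3.7 and your argument that each $\textbf{B}_{\textbf{m},\textbf{n}}$ has a single eigenvalue are correct and indeed needed (the paper tacitly uses both). Where you diverge from the paper is in the final step. Rather than eliminating the nilpotent parts $\textbf{N}_{\textbf{m},\textbf{n}}$ one at a time via the commutator recursion you write down, the paper gives a two-line irreducibility argument: once the commuting matrices $\textbf{B}_{\textbf{m},\textbf{n},b,b'}-\lambda_{\textbf{m},\textbf{n},b,b'}\textbf{I}$ are all nilpotent, Engel's theorem supplies a common null vector $v\in V_\Lambda$, and then
\[
W=\bigl\{v\in V:\ (t^{\textbf{m}}b\,t^{\textbf{n}}b'-\lambda_{\textbf{m},\textbf{n},b,b'}t^{\textbf{m}+\textbf{n}})v=0\ \text{for all }\textbf{m},\textbf{n},b,b'\bigr\}
\]
is declared a nonzero $(\mathcal{S}_N\ltimes A_N)\otimes B$-submodule, whence $W=V$. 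This replaces your entire propagation scheme by a single appeal to irreducibility.

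The trade-off is that checking $W$ is $\mathcal{S}_N$-stable boils down to the scalar identity $(u,\textbf{m})\lambda_{\textbf{s}+\textbf{m},\textbf{n}}+(u,\textbf{n})\lambda_{\textbf{m},\textbf{s}+\textbf{n}}=(u,\textbf{m}+\textbf{n})\lambda_{\textbf{m},\textbf{n}}$ for $u\perp\textbf{s}$, which the paper does not verify inside the proof; it is ultimately supplied by Theorem~3.9 (Rao's Theorem~9.1 in [14]), stated immediately afterwards, which forces all the $\lambda_{\textbf{m},\textbf{n}}$ to be the constants $\lambda,\mu,c$. So what you correctly identified as ``the technical heart of the theorem'' is exactly what the paper outsources to the cited result from~[14]; your commutator recursion would in effect reprove that result for $\mathcal{S}_N$ from scratch. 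Your route is therefore more self-contained, while the paper's is much shorter but leans on external input.
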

\begin{proof} By Engel's theorem, there exists a non-zero vector $v\in V_{\Lambda}$ such that $(t^{\textbf{m}}b\cdot t^{\textbf{n}}b^\prime- \lambda_{\textbf{m},\textbf{n},b , b^\prime}t^{\textbf{m}+\textbf{n}})\cdot v = 0$. Note that $W= \{ v \in V \mid (t^{\textbf{m}}b\cdot t^{\textbf{n}}b^\prime -\lambda_{\textbf{m}, \textbf{n},b , b'}t^{\textbf{m}+\textbf{n}})\cdot v =0, \forall \textbf{m},\textbf{n}\in \mathbb{Z}^{N}, b,b^\prime \in B \}$ is a non-zero $(S_{N}\ltimes A_{N})\otimes B$ submodule of $V$, and due to the irreducibilty of $V$, it follows that $W=V$.
\end{proof}
The following Theorem due to Rao is also useful for our purposes, it essentially establishes the relation between various $\lambda_{\textbf{m},\textbf{n}}, \textbf{m},\textbf{n}\in \mathbb{Z}^{N}$. \begin{theorem}{[\textbf{[14]}, Theorem 9.1]} Suppose $A_{N}'$ acts non-trivally on $V$ and $t^{\textbf{0}}$ acts by some non-zero scalar on $ V$, then there exists non-zero scalars $\lambda_{\textbf{m},\textbf{n}},\: \textbf{m},\textbf{n}\in \mathbb{Z}^{N}$ such that $ t^{\textbf{n}}t^{\textbf{n}}=\lambda_{\textbf{m},\textbf{n}}t^{\textbf{m},\textbf{n}}$ on $ V$ satisyfing \begin{enumerate}
    \item $\lambda_{\textbf{m},\textbf{n}}=\lambda$, if $\textbf{m},\textbf{n},\textbf{m}+\textbf{n}\in \mathbb{Z}^{N}\setminus\{\textbf{0}\}$.
    \item $\lambda_{-\textbf{m},\textbf{m}}=\mu$, if $\textbf{m}\in \mathbb{Z}^{N}\setminus \{\textbf{0}\}$. 
    \item $\lambda_{\textbf{m},\textbf{0}}=c$ and $\lambda^{2}=\mu c$.
\end{enumerate}
    
\end{theorem}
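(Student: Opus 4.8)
The plan is to distil two functional equations for the scalars $\lambda_{\textbf{m},\textbf{n}}$ (defined through $t^{\textbf{m}}t^{\textbf{n}} = \lambda_{\textbf{m},\textbf{n}}t^{\textbf{m}+\textbf{n}}$ by Theorem 3.8) and to solve them. Since $t^{\textbf{0}}$ acts as $c\,\mathrm{Id}$ with $c\neq 0$ and each $t^{\textbf{m}}$ with $\textbf{m}\neq\textbf{0}$ is injective, every operator $t^{\textbf{k}}$ is nonzero. Hence associativity of operator composition, $(t^{\textbf{m}}t^{\textbf{n}})t^{\textbf{p}}=t^{\textbf{m}}(t^{\textbf{n}}t^{\textbf{p}})$, together with Theorem 3.8 and cancellation of $t^{\textbf{m}+\textbf{n}+\textbf{p}}$, gives the $2$-cocycle identity
\begin{equation*}
\lambda_{\textbf{m},\textbf{n}}\,\lambda_{\textbf{m}+\textbf{n},\textbf{p}} = \lambda_{\textbf{n},\textbf{p}}\,\lambda_{\textbf{m},\textbf{n}+\textbf{p}}, \qquad \textbf{m},\textbf{n},\textbf{p}\in\mathbb{Z}^N. \tag{A}
\end{equation*}
Setting $\textbf{n}=\textbf{0}$ and using $t^{\textbf{0}}=c\,\mathrm{Id}$ immediately yields $\lambda_{\textbf{m},\textbf{0}}=\lambda_{\textbf{0},\textbf{m}}=c$, which is the first half of item~(3).

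The second, and decisive, relation comes from the $\mathcal{S}_N$-action. Writing $\kappa_{ab}(\textbf{r},\textbf{s})=\det\begin{pmatrix}\textbf{s}\\\textbf{r}\end{pmatrix}_{a,b}=r_b s_a-r_a s_b$, one has the operator identity $[d_{ab}(\textbf{r}),t^{\textbf{s}}]=\kappa_{ab}(\textbf{r},\textbf{s})\,t^{\textbf{r}+\textbf{s}}$, with $\kappa_{ab}(\textbf{r},\cdot)$ additive. Applying $\mathrm{ad}\,d_{ab}(\textbf{r})$ to $t^{\textbf{m}}t^{\textbf{n}}=\lambda_{\textbf{m},\textbf{n}}t^{\textbf{m}+\textbf{n}}$, expanding by the Leibniz rule, and cancelling $t^{\textbf{r}+\textbf{m}+\textbf{n}}$, I obtain
\begin{equation*}
\lambda_{\textbf{m},\textbf{n}}\bigl(\kappa_{ab}(\textbf{r},\textbf{m})+\kappa_{ab}(\textbf{r},\textbf{n})\bigr) = \kappa_{ab}(\textbf{r},\textbf{m})\,\lambda_{\textbf{r}+\textbf{m},\textbf{n}} + \kappa_{ab}(\textbf{r},\textbf{n})\,\lambda_{\textbf{m},\textbf{r}+\textbf{n}}. \tag{B}
\end{equation*}
Choosing $\textbf{r}$ proportional to $\textbf{n}$ (so $\kappa_{ab}(\textbf{r},\textbf{n})=0$ for all $a,b$) and some $a\neq b$ with $\kappa_{ab}(\textbf{r},\textbf{m})\neq0$ (possible exactly when $\textbf{m}\not\parallel\textbf{n}$), equation~(B) collapses to the shift invariance $\lambda_{\textbf{m},\textbf{n}}=\lambda_{\textbf{m}+\textbf{r},\textbf{n}}$; symmetrically $\lambda_{\textbf{m},\textbf{n}}=\lambda_{\textbf{m},\textbf{n}+\textbf{r}'}$ whenever $\textbf{r}'\parallel\textbf{m}$ and $\textbf{m}\not\parallel\textbf{n}$.

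To establish item~(1) I would iterate these two shift relations to connect any two pairs in the generic locus $\{\textbf{m},\textbf{n},\textbf{m}+\textbf{n}\neq\textbf{0}\}$, so that $\lambda_{\textbf{m},\textbf{n}}$ equals a single constant $\lambda$ on it; the residual sub-case of proportional $\textbf{m},\textbf{n}$ is reached from a non-proportional pair by one last shift. I expect the genuine obstacle to be precisely this connectivity bookkeeping: the chain of shifts must be routed so that every intermediate pair keeps all three of $\textbf{m},\textbf{n},\textbf{m}+\textbf{n}$ nonzero and so that, at each step, the non-proportionality needed to make some $\kappa_{ab}\neq0$ is available. For $N\geq 2$ --- which is the case for $\mathcal{S}_N$ --- the lattice $\mathbb{Z}^N$ is roomy enough to arrange this, but it is the step that must be carried out with care.

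Once $\lambda$ is constant, item~(2) and the relation $\lambda^2=\mu c$ drop out of~(A). Putting $\textbf{p}=-(\textbf{m}+\textbf{n})$ with $\textbf{m},\textbf{n},\textbf{m}+\textbf{n}\neq\textbf{0}$ turns~(A) into $\lambda\cdot\lambda_{\textbf{m}+\textbf{n},-(\textbf{m}+\textbf{n})}=\lambda\cdot\lambda_{\textbf{m},-\textbf{m}}$, so $\lambda_{\textbf{s},-\textbf{s}}$ is independent of $\textbf{s}$; evaluating at $\textbf{s}=\pm\textbf{m}$ shows $\lambda_{-\textbf{m},\textbf{m}}=\lambda_{\textbf{m},-\textbf{m}}=:\mu$ is one constant, which is item~(2). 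Finally, taking $\textbf{p}=-\textbf{n}$ with $\textbf{m},\textbf{n},\textbf{m}+\textbf{n}\neq\textbf{0}$ turns~(A) into $\lambda_{\textbf{m},\textbf{n}}\,\lambda_{\textbf{m}+\textbf{n},-\textbf{n}}=\lambda_{\textbf{n},-\textbf{n}}\,\lambda_{\textbf{m},\textbf{0}}$, that is $\lambda\cdot\lambda=\mu\cdot c$, which completes item~(3).
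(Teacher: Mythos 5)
Your identities (A) and (B) are the right raw material, and they match what the paper actually does: its proof is a one-line pointer to [14, Theorem 9.1] with the Hamiltonian operators $h_{\textbf{r}}$ replaced by $d_{ij}(\textbf{r})$, and your (B) is exactly the relation obtained by applying $\mathrm{ad}\,d_{ab}(\textbf{r})$ to $t^{\textbf{m}}t^{\textbf{n}}=\lambda_{\textbf{m},\textbf{n}}t^{\textbf{m}+\textbf{n}}$. Your deductions of $\lambda_{\textbf{m},\textbf{0}}=c$, of item (2), and of $\lambda^{2}=\mu c$ from the cocycle (A) are correct, conditional on item (1).

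The gap is in item (1). The two shift relations you extract from (B), namely $\lambda_{\textbf{m},\textbf{n}}=\lambda_{\textbf{m}+k\textbf{n},\textbf{n}}$ and $\lambda_{\textbf{m},\textbf{n}}=\lambda_{\textbf{m},\textbf{n}+k\textbf{m}}$, preserve the subgroup $\mathbb{Z}\textbf{m}+\mathbb{Z}\textbf{n}$ of $\mathbb{Z}^{N}$; indeed they generate only the action of $SL_{2}(\mathbb{Z})$ on ordered bases of that rank-two lattice. So no amount of iteration connects, say, $(\textbf{e}_{1},\textbf{e}_{2})$ to $(2\textbf{e}_{1},\textbf{e}_{2})$ (standard coordinate vectors): these generate different sublattices, yet both lie in the generic locus. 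Likewise the ``one last shift'' into the proportional locus is impossible: if $\textbf{m}\parallel\textbf{n}$, then any $\textbf{r}\parallel\textbf{n}$ is also proportional to $\textbf{m}$, so every $\kappa_{ab}(\textbf{r},\textbf{m})$ vanishes and the shift relation is vacuous there. The connectivity is therefore not mere bookkeeping; it genuinely fails for the shift-only strategy. The repair uses tools you already have on the table: (i) the symmetry $\lambda_{\textbf{m},\textbf{n}}=\lambda_{\textbf{n},\textbf{m}}$, since the $t^{\textbf{m}}$ commute; (ii) relation (B) used with \emph{both} coefficients nonzero rather than killing one of them --- for instance $\textbf{m}=\textbf{n}=\textbf{e}_{1}$, $\textbf{r}=\textbf{e}_{2}$, $(a,b)=(1,2)$ gives $2\lambda_{\textbf{e}_{1},\textbf{e}_{1}}=\lambda_{\textbf{e}_{1}+\textbf{e}_{2},\textbf{e}_{1}}+\lambda_{\textbf{e}_{1},\textbf{e}_{1}+\textbf{e}_{2}}$, which pins down the proportional pairs; and (iii) the cocycle (A) fed back into item (1) --- for instance $(\textbf{m},\textbf{n},\textbf{p})=(\textbf{e}_{1},\textbf{e}_{1},\textbf{e}_{2})$ gives $\lambda_{\textbf{e}_{1},\textbf{e}_{1}}\lambda_{2\textbf{e}_{1},\textbf{e}_{2}}=\lambda_{\textbf{e}_{1},\textbf{e}_{2}}\lambda_{\textbf{e}_{1},\textbf{e}_{1}+\textbf{e}_{2}}$, which is what lets you cross between distinct sublattices. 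Until item (1) is run with all three ingredients rather than the shifts alone, the constancy of $\lambda$ on the generic locus is not established.
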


    \begin{proof}
         Obtained similarly for the $\mathcal{S}_{N}$ case, where the operators $h_{\textbf{r}}$ for the $\mathcal{H}_{N}$ in $[14]$ replaced by the operators  $d_{ij}(\textbf{r})$, for $1\leq i\neq j \leq N$.
    \end{proof}
\subsection{Action of $\mathcal{S}_{N}\otimes B$ }
Let $V$ be a nonzero irreducible uniformly bounded weight module for $(\mathcal{S}_{N}\ltimes A_{N})\otimes B$.
Let $\lambda \in \mathfrak{h}^{*}$ be such that $V_{\lambda}\neq 0$. It follows from the irreducibility of $V$ that $V=\bigoplus_{\textbf{r}\in \mathbb{Z}^{n}}V_{\Lambda + \textbf{r}}$, with $V_{\Lambda +\textbf{r}} = \{v \in V \mid d_{i}\cdot v = (\lambda(d_{i})+r_{i})v, \forall \:  1\leq i \leq N \}$, where $\Lambda = (\lambda(d_{i}))_{i=1}^{N}$. Also, since $t^{\textbf{r}}$ acts injectively on $V$, it follows that $t^{\textbf{r}}\cdot V_{\Lambda}=V_{\Lambda+\textbf{r}}$. We can therefore identify $V$ with $V_{\Lambda}\otimes A_{N}$.
Let $\mathcal{U}$ denote the universal enveloping algebra of $(\mathcal{S}_{N}\ltimes A_{N})\otimes B$. Then $\mathcal{U}$ is a $\mathbb{Z}^{N}$-graded Lie algebra and $\mathcal{U}= \bigoplus_{\textbf{n} \in \mathbb{Z}^{N}}\mathcal{U}_{n}$, where $\mathcal{U}_{\textbf{n}}= \{ u \in \mathcal{U} \: \mid \: [d_{i}, u]=n_{i}u, \: \forall\:  1\leq i \leq N\}$. Let $\mathcal{L}$ be the two-sided ideal in $\mathcal{U}$, generated by elements of the form $t^{\textbf{r}}b-\psi(b)t^{\textbf{r}}$($\textbf{r}\neq \textbf{0}$), $t^{\textbf{0}}b - c\phi(b)$ and $ t^{\textbf{r}}b_{1}t^{\textbf{s}}b_{2}- \lambda_{\textbf{r},\textbf{s},b_{1},b_{2}} t^{\textbf{r}+\textbf{s}}$($\forall\;  \textbf{r},\textbf{s}\in \mathbb{Z}^{N}$). Since $A_{N}\otimes B$ acts quasi-associatively on $V$,  $ \mathcal{L}$ acts as zero on $V$. Therefore, $V$ is an irreducible $\mathcal{U}/\mathcal{L}$-module. \par
We define the elements $T(u,\textbf{r},b_{1},b_{2})=t^{-\textbf{r}}b_{1}D(u,\textbf{r})b_{2}\in \mathcal{U}/\mathcal{L}$, for $u\in \mathbb{C}^{N}, \textbf{r} \in \mathbb{Z}^{N}, b_{1}, b_{2}\in B$, with $(u,\textbf{r})=0$, and let 
\begin{equation*}
    D= \text{span}_{\mathbb{C}}\{ T(u,\textbf{r}, b_{1}, b_{2}) \mid u\in \mathbb{C}^{N}, \textbf{r}\in \mathbb{Z}^{N}, b_{1}, b_{2}\in B \: \text{and} \:  (u,\textbf{r})=0\}.
\end{equation*}
\begin{lemma}
    \begin{enumerate}
        \item $D$ is a Lie algebra with the Lie bracket  \begin{multline*}
            [T(u,\textbf{r},b_{1},b_{2}),T(v,\textbf{s},b_{3},b_{4}) ] = T(w,\textbf{r}+\textbf{s},b_{1}b_{3},b_{2}b_{4})  - (u,\textbf{s})T(v,s,b_{1}b_{2}b_{3},b_{4}) \\ + (v,\textbf{r})T(u,\textbf{r},b_{1}b_{3}b_{4},b_{2}),
        \end{multline*}
        where $w=(u,\textbf{s})v-(v,\textbf{r})u$.
        \item $V_{\Lambda}$ is an irreducible $D$-module.
    \end{enumerate}
    \end{lemma}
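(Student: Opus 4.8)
The plan is to treat part (1) as a direct computation inside the associative algebra $\mathcal{U}/\mathcal{L}$, and part (2) as an application of the irreducibility of $V$ via an induced-module construction. Throughout I use that $\mathcal{U}/\mathcal{L}$ is $\mathbb{Z}^N$-graded, that $A_N\otimes B$ is abelian, and the structural results already proved: $t^{\textbf{r}}b=\psi(b)t^{\textbf{r}}$ for $\textbf{r}\neq\textbf{0}$ (Proposition 3.7), $t^{\textbf{0}}b=c\phi(b)$, the quasi-associativity collapse of products of $t$-elements (Theorem 3.8), and the relations $\lambda_{\textbf{m},\textbf{n}}=\lambda$, $\lambda_{\textbf{m},-\textbf{m}}=\mu$, $\lambda_{\textbf{m},\textbf{0}}=c$ with $\lambda^{2}=\mu c$ (Theorem 3.10), together with the injectivity of $t^{\textbf{r}}$.

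For part (1), since $t^{-\textbf{r}}b_{1}$ has degree $-\textbf{r}$ and $D(u,\textbf{r})b_{2}$ has degree $\textbf{r}$, each $T(u,\textbf{r},b_{1},b_{2})$ is a degree-$\textbf{0}$ element, so $D$ acts on $V_{\Lambda}$. To establish the bracket formula I would expand $T(u,\textbf{r},b_{1},b_{2})T(v,\textbf{s},b_{3},b_{4})-T(v,\textbf{s},b_{3},b_{4})T(u,\textbf{r},b_{1},b_{2})$ and simplify with three tools: the two $t$-factors commute, so $t^{-\textbf{r}}b_{1}\cdot t^{-\textbf{s}}b_{3}=t^{-\textbf{s}}b_{3}\cdot t^{-\textbf{r}}b_{1}$; the derivation relations $[D(u,\textbf{a}),t^{\textbf{b}}]=(u|\textbf{b})t^{\textbf{a}+\textbf{b}}$ and $[D(u,\textbf{a}),D(v,\textbf{b})]=D((u|\textbf{b})v-(v|\textbf{a})u,\textbf{a}+\textbf{b})$, each tensored with the appropriate product in $B$; and Theorems 3.8 and 3.10 to collapse every adjacent pair of $t$-elements back to a single $t$. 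Moving $D(u,\textbf{r})b_{2}$ to the right past $t^{-\textbf{s}}b_{3}$ yields the derivation term $-(u|\textbf{s})t^{\textbf{r}-\textbf{s}}b_{2}b_{3}$, and commuting the two $D$-factors yields $D(w,\textbf{r}+\textbf{s})b_{2}b_{4}$ with $w=(u|\textbf{s})v-(v|\textbf{r})u$; the purely symmetric pieces cancel, and the survivors reassemble, after collapsing the adjacent $t$-products, into $T(w,\textbf{r}+\textbf{s},b_{1}b_{3},b_{2}b_{4})$, $-(u|\textbf{s})T(v,\textbf{s},b_{1}b_{2}b_{3},b_{4})$ and $(v|\textbf{r})T(u,\textbf{r},b_{1}b_{3}b_{4},b_{2})$. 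One checks $(w|\textbf{r}+\textbf{s})=0$ using $(u|\textbf{r})=(v|\textbf{s})=0$, so each output lies in $D$; hence $D$ is closed under the bracket and the formula holds.

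For part (2), let $0\neq W\subseteq V_{\Lambda}$ be a $D$-submodule and set $M=\bigoplus_{\textbf{r}\in\mathbb{Z}^{N}}t^{\textbf{r}}W$. Since $t^{\textbf{r}}$ is injective and raises degree by $\textbf{r}$, this sum is direct and its degree-$\Lambda$ component is $W$. I claim $M$ is a $(\mathcal{S}_{N}\ltimes A_{N})\otimes B$-submodule. Stability under $t^{\textbf{s}}b$ is immediate from Theorem 3.8: $t^{\textbf{s}}b\cdot t^{\textbf{r}}w=\lambda_{\textbf{s},\textbf{r},b,1}t^{\textbf{s}+\textbf{r}}w\in t^{\textbf{s}+\textbf{r}}W$. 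For $D(u,\textbf{s})b$ I write $D(u,\textbf{s})b\cdot t^{\textbf{r}}=t^{\textbf{r}}D(u,\textbf{s})b+(u|\textbf{r})t^{\textbf{r}+\textbf{s}}b$; the second summand lands in $t^{\textbf{r}+\textbf{s}}W$ (using $t^{\textbf{r}+\textbf{s}}b=\psi(b)t^{\textbf{r}+\textbf{s}}$, or $c\phi(b)$ when $\textbf{r}+\textbf{s}=\textbf{0}$), while for the first I use that $t^{\textbf{s}}t^{-\textbf{s}}=\mu t^{\textbf{0}}$ acts as the nonzero scalar $\mu c=\lambda^{2}$, so that $D(u,\textbf{s})b\cdot w=(\mu c)^{-1}t^{\textbf{s}}\big(t^{-\textbf{s}}D(u,\textbf{s})b\cdot w\big)=(\mu c)^{-1}t^{\textbf{s}}\big(T(u,\textbf{s},1,b)\cdot w\big)\in t^{\textbf{s}}W$, since $T(u,\textbf{s},1,b)\in D$ preserves $W$. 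Hence $t^{\textbf{r}}D(u,\textbf{s})b\cdot w\in t^{\textbf{r}}(t^{\textbf{s}}W)\subseteq M$. Thus $M$ is a nonzero submodule, so $M=V$ by irreducibility; comparing degree-$\Lambda$ components gives $W=V_{\Lambda}$.

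The main obstacle is the bookkeeping in part (1): besides tracking the four $B$-factors in every monomial, one must verify that the quasi-associativity structure constants produced when collapsing the $t$-products (the generic $\lambda$, and $\mu$, $c$ in the degenerate cases $\textbf{r}+\textbf{s}=\textbf{0}$, $\textbf{r}=\textbf{0}$, or $\textbf{s}=\textbf{0}$) recombine through $\lambda^{2}=\mu c$ into the coefficients of the stated formula — effectively, each surviving term appears with the overall constant $\lambda$, so matching the formula as written corresponds to the normalization $\lambda=1$; isolating and reconciling these degenerate cases is where the argument is most delicate. In part (2) the only subtle point is the identity $D(u,\textbf{s})b\cdot w\in t^{\textbf{s}}W$, which genuinely uses both the injectivity of $t^{\textbf{s}}$ (so that $t^{\textbf{s}}t^{-\textbf{s}}$ is an invertible scalar) and the $D$-stability of $W$.
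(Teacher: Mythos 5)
Your proposal is correct, but it takes a genuinely different route from the paper in both parts. For part (1) the paper simply cites the computation of Billig--Talboom and only verifies the closure condition $(w,\textbf{r}+\textbf{s})=(u,\textbf{s})(v,\textbf{r})-(v,\textbf{r})(u,\textbf{s})=0$; you carry out the commutator expansion in $\mathcal{U}/\mathcal{L}$ explicitly, and you correctly flag the one point the paper passes over in silence, namely that collapsing adjacent $t$-products via Theorems 3.8--3.10 introduces the structure constants $\lambda$, $\mu$, $c$ (reconciled through $\lambda^{2}=\mu c$), so the bracket as literally printed presupposes the normalization $\lambda=1$, equivalently the rescaling $t^{\textbf{r}}\mapsto\lambda^{-1}t^{\textbf{r}}$, $t^{\textbf{0}}\mapsto c^{-1}t^{\textbf{0}}$ that makes the $A_{N}$-action honestly associative. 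For part (2) the paper argues top-down: for $0\neq v,w\in V_{\Lambda}$, irreducibility of $V$ plus a weight argument produces $X\in(\mathcal{U}/\mathcal{L})_{0}$ with $X\cdot v=w$, and the PBW-type identification $(\mathcal{U}/\mathcal{L})_{0}=\mathcal{U}(D)$ finishes; the burden there, left implicit, is rewriting every degree-zero monomial of $\mathcal{U}/\mathcal{L}$ as a word in the $T$'s. You argue bottom-up: from a nonzero $D$-submodule $W\subseteq V_{\Lambda}$ you build $M=\bigoplus_{\textbf{r}}t^{\textbf{r}}W$ and check directly that it is stable under all of $(\mathcal{S}_{N}\ltimes A_{N})\otimes B$, using that $t^{\textbf{s}}t^{-\textbf{s}}$ acts by the nonzero scalar $\mu c$ and that $T(u,\textbf{s},1,b)\in D$ for every degree-$\textbf{s}$ element of $\mathcal{S}_{N}$ (legitimate precisely because $(u|\textbf{s})=0$ there); irreducibility of $V$ and comparison of the $\Lambda$-components then give $W=V_{\Lambda}$. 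Your route trades the PBW bookkeeping for an explicit stability verification; both are sound, and the only loose end in yours is the degenerate case $\textbf{s}=\textbf{0}$, where the scalar $(\mu c)^{-1}$ should be replaced by $c^{-1}$ since $T(u,\textbf{0},1,b)=cD(u,\textbf{0})b$ --- a one-line fix.
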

    \begin{proof}
        \begin{enumerate}
            \item The proof is the same as that of [2]. The only thing to verify here is that $(w,\textbf{r}+\textbf{s})=0$. By definition, $w=(u,\textbf{s})v-(v,\textbf{r})u$, so that $(w,\textbf{r}+\textbf{s})= (u,\textbf{s})(v,\textbf{r})-(v,\textbf{r})(u,\textbf{s})=0$.
            \item  It is easy to see that $V_{\Lambda}$ is a $D$-module. Let $v,w\in V_{\Lambda}$ be two arbitrary vectors. Irreducibility of $V_{\Lambda}$ together with weight argument gives $X\in (\mathcal{U}/\mathcal{L})_{0}$ such that $X\cdot v = w$. Also, PBW theorem implies that $(\mathcal{U}/\mathcal{L})_{0}= \mathcal{U}(D)$. Therefore, it follows that $V_{\Lambda}$ is an irreducible $D$-module.
        \end{enumerate}
    \end{proof}
For $u\in \mathbb{C}^{N}$, $\textbf{r}\in \mathbb{Z}^{N}\setminus\{\textbf{0}\}$, we define $ T_{1}(u,\textbf{r},b_{1},b_{2})=t^{-\textbf{r}}b_{1}D(u,\textbf{r})b_{2}-cD(u,0)b_{1}b_{2} $ and Lie algebra
\begin{equation*}
    D_{1}=\text{span}_{\mathbb{C}}\{T_{1}(u,\textbf{r},b_{1},b_{2}) \mid u\in \mathbb{C}^{N}, \textbf{r} \in \mathbb{Z}^{N}, b_{1},b_{2}\in B \: \text{and} \: (u,\textbf{r})=0,\textbf{r}\neq \textbf{0}\}.
\end{equation*}
Also, define $I(u,\textbf{r},b_{1},b_{2})= \psi(b_{1})D(u,\textbf{r})b_{2}-cD(u,0)b_{1}b_{2}$ and Lie algebra \begin{equation*}
    D_{2}= \text{span}_{\mathbb{C}}\{I(u,\textbf{r},b_{1},b_{2}) \mid u \in \mathbb{C}^{N}, \textbf{r}\in \mathbb{Z}^{N}, b_{1},b_{2} \in B \: \text{and } (u,\textbf{r})=0, \textbf{r}\neq \textbf{0} \:  \} .
    \end{equation*}
Furthermore, we define the subspace $W$ of $V$, defined by 
\begin{equation*}
    W= \text{span}_{\mathbb{C}}\{t^{\textbf{r}}\cdot v - cv \mid v\in V_{\Lambda}, \textbf{r} \in \mathbb{Z}^{N} \}=  \text{span}\{t^{\textbf{r}}\cdot v - cv \mid v\in V, \textbf{r} \in \mathbb{Z}^{N} \}.
    \end{equation*}
Note that $\text{dim}(V/W)= \text{dim}(V_{\Lambda})$.
\begin{proposition}
    \begin{enumerate}
        \item $D_{1}$ is a Lie algebra with the bracket \begin{multline*}
            [T_{1}(u,\textbf{r},b_{1},b_{2}), T_{1}(v,s,b_{3},b_{4})] = T_{1}(w,\textbf{r}+\textbf{s},b_{1}b_{3},b_{2}b_{4})-(u,\textbf{s})T_{1}(v,\textbf{s},b_{3},b_{1}b_{2}b_{4}) \\
            +(v,r)T_{1}(u,\textbf{r}, b_{1}, b_{2}b_{3}b_{4}),
        \end{multline*}
        where $w=(u,\textbf{s})v-(v,\textbf{r})u$. Also, $[D,D] \subseteq D_{1}$.
        \item  $D_{2}$ is a Lie subalgebra of $\mathcal{S}_{N}'\otimes B$ with the Lie bracket 
        \begin{multline*}
            [I(u,\textbf{r},b_{1},b_{2}), I(v,\textbf{s},b_{3},b_{4})]= I(w,\textbf{r}+\textbf{s},b_{1}b_{3},b_{2}b_{4})-(u,\textbf{s})I(v,\textbf{s}, b_{3},b_{1}b_{2}b_{4})
+(v,\textbf{r})I(u,\textbf{r},b_{1},b_{2}b_{3}b_{4}),        \end{multline*}
where $w=(u,\textbf{s})v-(v,\textbf{r})u$.
\item The mapping $\pi: D_{1}\to D_{2}$ defined by $ \pi(T_{1}(u,\textbf{r},b_{1},b_{2}))= I(u,\textbf{r}, b_{1}, b_{2})$ is a surjective Lie algebra homomorphism. 
  \item $V_{\Lambda}$ is an irreducible $D_{1}$-module.
    \item $W$ is $D_{2}$-module. In particular, $V/W$ is a $D_{2}$-module and a $D_{1}$-module via $\pi$.
    \item $V_{\Lambda}\cong V/W$ as $D_{1}$-modules.
    \end{enumerate}
   
\end{proposition}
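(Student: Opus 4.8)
The plan is to prove the six parts in order, with (1)--(2) supplying the Lie-theoretic computations, (3) a formal comparison, and (4)--(6) the module-theoretic conclusions. For (1) and (2) I would first record that, since $\textbf{r}\neq\textbf{0}$, Proposition 3.7 gives $t^{-\textbf{r}}\otimes b_{1}=\psi(b_{1})t^{-\textbf{r}}$ on $V$, while $t^{\textbf{0}}\otimes b$ acts by the scalar $c\phi(b)$; in particular $cD(u,\textbf{0})b_{1}b_{2}$ equals the element $T(u,\textbf{0},1,b_{1}b_{2})$ of $D$, so that $T_{1}(u,\textbf{r},b_{1},b_{2})=T(u,\textbf{r},b_{1},b_{2})-T(u,\textbf{0},1,b_{1}b_{2})$ and hence $D_{1}\subseteq D$. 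The bracket in (1) then results from expanding $[T_{1},T_{1}]$: the $[T,T]$ contribution is supplied by the bracket of the preceding lemma (the Lie structure of $D$ and irreducibility of $V_{\Lambda}$), and the cross contributions by the elementary commutators $[D(u,\textbf{0})\otimes b,\,t^{-\textbf{s}}\otimes b']=-(u,\textbf{s})\,t^{-\textbf{s}}\otimes bb'$ and $[D(u,\textbf{0})\otimes b,\,D(v,\textbf{s})\otimes b']=(u,\textbf{s})\,D(v,\textbf{s})\otimes bb'$, the Cartan--Cartan bracket vanishing because $(u,\textbf{0})=0$; the redistribution of the algebra arguments in the stated formula is exactly what these cross terms produce. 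The inclusion $[D,D]\subseteq D_{1}$ follows by writing each summand of the preceding lemma's bracket as $T_{1}+$ (a Cartan correction): with $w=(u,\textbf{s})v-(v,\textbf{r})u$ the corrections cancel, leaving a combination of $T_{1}$'s. Part (2) is the identical computation performed inside $\mathcal{S}_{N}\otimes B$. The delicate bookkeeping is the zero-degree case $\textbf{r}+\textbf{s}=\textbf{0}$, where products such as $t^{-\textbf{r}}b_{1}\cdot t^{\textbf{r}}b_{3}$ must be evaluated through the quasi-associativity relations of Theorems 3.8 and 3.9 and the resulting $t^{\textbf{0}}$ replaced by its scalar; tracking these scalars is the only subtle point.

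For (3), surjectivity is immediate from the definitions on generators, and once $\pi$ is well defined it is automatically a homomorphism because the brackets established in (1) and (2) are formally identical. For well-definedness I would use that $t^{-\textbf{r}}$ commutes with $D(u,\textbf{r})\otimes b_{2}$ whenever $(u,\textbf{r})=0$, together with the $\mathbb{Z}^{N}$-grading of $\mathcal{S}_{N}\otimes B$: a vanishing linear combination of the $T_{1}$'s splits into a positive-degree part and a zero-degree part in the same way as the corresponding combination of $I$'s, the former matching the $\psi(b_{1})D(u,\textbf{r})\otimes b_{2}$ terms and the latter the Cartan corrections $cD(u,\textbf{0})\otimes b_{1}b_{2}$.

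For (4), I would transfer the irreducibility of $V_{\Lambda}$ as a $D$-module to $D_{1}$. Since $D_{1}\subseteq D$, the point is that any $D_{1}$-submodule of $V_{\Lambda}$ is automatically a $D$-submodule: the missing generators of $D$ are the Cartan operators $D(u,\textbf{0})\otimes b$, which are recovered either from the zero-degree instances of the $D_{1}$-bracket or, for $b=1$, from the scalar action of $\mathfrak{h}$ on the weight space $V_{\Lambda}$. For (5), $W$ is $D_{2}$-stable by the identity $I\cdot(t^{\textbf{s}}w-cw)=[I,t^{\textbf{s}}]\,w+\bigl(t^{\textbf{s}}(Iw)-c\,Iw\bigr)$: the second summand lies in $W$ by definition, and $[I,t^{\textbf{s}}]w$ reduces, via the commutators recorded above, to a combination of the defining generators of $W$. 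Consequently $V/W$ is a $D_{2}$-module and, through $\pi$, a $D_{1}$-module.

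The main obstacle is (6), the isomorphism $V_{\Lambda}\cong V/W$ of $D_{1}$-modules. As vector spaces the natural map $\theta\colon V_{\Lambda}\to V/W$, $v\mapsto\overline{v}$, is surjective, since each $V_{\Lambda+\textbf{r}}=t^{\textbf{r}}V_{\Lambda}$ collapses into $V_{\Lambda}$ modulo $W$ (because $t^{\textbf{r}}v\equiv cv$), and it is then bijective by the count $\dim(V/W)=\dim(V_{\Lambda})$. The heart of the matter is that $\theta$ intertwines the two $D_{1}$-actions: the genuine degree-zero operator $T_{1}$ on $V_{\Lambda}$ and the operator $I=\pi(T_{1})$ on $V/W$. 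This is where the two realizations are reconciled, for on $V/W$ every monomial $t^{\textbf{s}}$ acts by the single scalar by which $t^{\textbf{0}}$ acts, so the extra factor $t^{-\textbf{r}}$ that distinguishes $T_{1}=\psi(b_{1})t^{-\textbf{r}}D(u,\textbf{r})b_{2}-cD(u,\textbf{0})b_{1}b_{2}$ from $I=\psi(b_{1})D(u,\textbf{r})b_{2}-cD(u,\textbf{0})b_{1}b_{2}$ degenerates to that scalar on the quotient. Carrying this comparison out term by term, with the scalar prescribed exactly by the definition of $W$, both completes (6) and retroactively confirms the well-definedness invoked in (3).
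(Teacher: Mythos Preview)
The paper gives no argument here; it simply refers to the loop--Witt paper [9], so your outline is in fact considerably more detailed than what the paper itself records, and it follows the same general strategy that [9] uses. Two points in your sketch, however, need correcting.

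\medskip
\textbf{Part (4).} Your mechanism for recovering the missing zero--degree generators of $D$ from ``zero--degree instances of the $D_{1}$--bracket'' does not work. When $\mathbf{s}=-\mathbf{r}$ the constraints $(u,\mathbf{r})=(v,\mathbf{s})=0$ force $(u,\mathbf{s})=(v,\mathbf{r})=0$ and hence $w=\mathbf{0}$, so by the very formula you establish in (1) the bracket $[T_{1}(u,\mathbf{r},\cdot,\cdot),T_{1}(v,-\mathbf{r},\cdot,\cdot)]$ vanishes identically and produces nothing. The correct observation is simpler and does not use the bracket at all: for $b_{1}\in M=\ker\psi$ one has $t^{-\mathbf{r}}b_{1}=\psi(b_{1})t^{-\mathbf{r}}=0$ in $\mathcal{U}/\mathcal{L}$, so $T_{1}(u,\mathbf{r},b_{1},b_{2})=-c\,D(u,\mathbf{0})b_{1}b_{2}$ and therefore $D(u,\mathbf{0})\otimes M\subseteq D_{1}$. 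Together with the fact that $D(u,\mathbf{0})\otimes 1$ acts by the scalar $(u,\Lambda)$ on $V_{\Lambda}$, this shows that every element of $D$ acts on $V_{\Lambda}$ as (an element of $D_{1}$) $+$ (a scalar), whence any $D_{1}$--submodule is $D$--stable.

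\medskip
\textbf{Part (6).} Your claim that $t^{-\mathbf{r}}$ ``degenerates to that scalar on the quotient'' needs more care with the constant $c$. By the definition of $W$ one has $t^{-\mathbf{r}}v'-cv'\in W$, not $t^{-\mathbf{r}}v'-v'\in W$. In [9] the hypothesis $t^{\mathbf{0}}\cdot v=v$ forces $c=1$ and the issue disappears; here $c$ is an arbitrary nonzero scalar, so the comparison $T_{1}$ versus $I$ on $V/W$ differs by $\psi(b_{1})\bigl(t^{-\mathbf{r}}v'-v'\bigr)$, which is \emph{not} automatically in $W$. You must either rescale the intertwiner $V_{\Lambda}\to V/W$ or track the factor $c$ explicitly (and, relatedly, the quasi--associativity constants $\lambda,\mu$ from Theorem~3.9 when checking the asserted equality of the two descriptions of $W$). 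This is exactly the ``delicate bookkeeping'' you flag earlier, but it reappears here and cannot be waved away.
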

\begin{proof}
    The proof is the same as that of [9] on Loop-Witt algebras.
\end{proof}
As define in [9], we consider the space
\begin{equation*}
    \tilde{D}= \text{span}\{\psi(b_{1})D(u,\textbf{r})b_{2} - D(u,\textbf{r})b_{1}b_{2} \mid \textbf{r} \in \mathbb{Z}^{N}\setminus\{\textbf{0}\}, u \in \mathbb{C}^{N}, b_{1},b_{2} \in B \: \text{and} \; (u, \textbf{r})=0 \}.
\end{equation*}
It is evident that $\tilde{D}\subseteq D_{2}$ and $\tilde{D}= \mathcal{S}_{N}'\otimes M$, where $M$ is the maximal ideal $\text{ker}\psi$. Since $\mathcal{S}_{N}'$ is an ideal of $\mathcal{S}_{N}$ and $M$ is an ideal of $B$,  it follows that $\tilde{D}$ is an ideal of $\mathcal{S}_{N}\otimes B$, it is therefore an ideal of $D_{2}$. Now, we proceed to prove the following important lemma, but the proof incorporated here is different from that in [9], these methods cannot be used because $(u,\textbf{r})=0$.
\begin{lemma}
  D(u,\textbf{r})b - $\psi(b)D(u,\textbf{r})$ acts as scalars on $V_{\Lambda}$ for all $u\in \mathbb{C}^{N}$ and $\textbf{r}\in \mathbb{Z}^{N}\setminus\{0\}$ with $(u,\textbf{r})=0$.
  
\end{lemma}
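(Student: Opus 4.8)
The plan is to show that the operator $D(u,\textbf{r})b - \psi(b)D(u,\textbf{r})$, for fixed $u,\textbf{r}$ with $(u,\textbf{r})=0$ and $\textbf{r}\neq\textbf{0}$, commutes with the action of $A_N' = A_N'\otimes 1$ on each weight space and use this to force it to be scalar on $V_\Lambda$. First I would observe that $D(u,\textbf{r})b - \psi(b)D(u,\textbf{r}) = D(u,\textbf{r})(b-\psi(b)1)$, so the element lives in $\mathcal S_N'\otimes M$ where $M=\ker\psi$; by the discussion preceding the lemma this is exactly $\tilde D\subseteq D_2$, and it acts on $V_\Lambda$ (since it has degree $\textbf{0}$ after composing with $t^{-\textbf{r}}$ it preserves each weight space — more precisely the degree-$\textbf{r}$ operator maps $V_\Lambda$ into $V_{\Lambda+\textbf{r}}$, so I should instead track $t^{-\textbf{r}}D(u,\textbf{r})(b-\psi(b)1)$, which is precisely a $D_2$-type element of degree zero).

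The key computational step I would carry out is to commute this operator past $t^{\textbf{s}}$ for various $\textbf{s}\neq\textbf{0}$. Using $[D(u,\textbf{r}), t^{\textbf{s}}] = (u,\textbf{s})t^{\textbf{r}+\textbf{s}}$ together with the quasi-associativity relations from Theorem~3.6 and Proposition~3.7 (namely $t^{\textbf{r}}b' = \psi(b')t^{\textbf{r}}$ for $\textbf{r}\neq\textbf{0}$ and the composite relations $t^{\textbf{m}}b\cdot t^{\textbf{n}}b' = \lambda_{\textbf{m},\textbf{n},b,b'}t^{\textbf{m}+\textbf{n}}$), I would compute the commutator of the candidate operator with $t^{\textbf{s}}$ and show that the $b$-twist $b-\psi(b)1$ gets annihilated upon passing through the $t^{\textbf{s}}$-action because $t^{\textbf{s}}(b-\psi(b)1) = (\psi(b)-\psi(b))t^{\textbf{s}} = 0$ for $\textbf{s}\neq\textbf{0}$. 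This should show that the operator $D(u,\textbf{r})(b-\psi(b)1)$, viewed as an endomorphism of $V_\Lambda$ via the $D_2$-action, commutes with every $t^{\textbf{s}}$, $\textbf{s}\neq\textbf{0}$, and hence lies in the commutant of the $A_N'$-action.

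Then I would invoke irreducibility: by Lemma~3.11, $V_\Lambda$ is an irreducible module for $D$ (equivalently $D_1$), and the associated endomorphism algebra contains the image of $\tilde D$. Since $\tilde D$ is an ideal of $D_2$ acting on the irreducible $D_1$-module $V_\Lambda$, and the relevant operator commutes with the full $A_N'$-action which generates enough of $\mathrm{End}(V_\Lambda)$, a Schur-type argument forces the operator to be a scalar. Concretely, I expect to show that the endomorphism $D(u,\textbf{r})(b-\psi(b)1)$ intertwines the irreducible action and therefore, by Schur's lemma (the weight space $V_\Lambda$ is finite-dimensional by uniform boundedness), acts as a scalar.

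The main obstacle I anticipate is the constraint $(u,\textbf{r})=0$, which is exactly why the authors flag that the method of [9] does not transfer: in the Witt case one could use elements with $(u,\textbf{r})\neq 0$ to produce a nonzero commutator $[D(u,\textbf{r}),t^{-\textbf{r}}]$ proportional to $t^{\textbf{0}}$ and extract scalars directly, but here the divergence-zero condition kills that term. The delicate part will therefore be engineering the right auxiliary brackets $d_{ij}(\textbf{s})$ or products $t^{-\textbf{r}}D(u,\textbf{r})b$ of total degree zero whose commutators \emph{do} detect the operator while respecting $(u,\textbf{r})=0$, and then verifying that the resulting intertwiner genuinely commutes with an irreducible action so that Schur applies. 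I would keep careful track of which composites land back in $V_\Lambda$ versus adjacent weight spaces, since that bookkeeping is where sign and index errors are most likely to creep in.
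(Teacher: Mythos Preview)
Your commutator computation is correct but does not close the gap. You verify that $X=D(u,\textbf{r})(b-\psi(b)1)$ commutes with every $t^{\textbf{s}}$, $\textbf{s}\neq\textbf{0}$, as operators on $V$. However, to invoke Schur on $V_{\Lambda}$ you would need $t^{-\textbf{r}}X$ (or the corresponding element of $D_{1}$) to commute with the \emph{irreducible $D$-action} on $V_{\Lambda}$, and $D$ is generated by elements of the form $t^{-\textbf{s}}D(v,\textbf{s})b'$, not by $A_{N}'$. Commuting with the abelian family $\{t^{\textbf{s}}\}$ says nothing about commuting with these; indeed $A_{N}'$ does not even preserve $V_{\Lambda}$, so ``$A_{N}'$ generates enough of $\mathrm{End}(V_{\Lambda})$'' is not a meaningful statement. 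Likewise, ``$\tilde D$ is an ideal of $D_{2}$'' gives $[\tilde D,D_{2}]\subseteq\tilde D$, not centrality, so Schur does not apply to an individual element of $\tilde D$.

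The paper's argument is structurally different and is where the real work sits. One uses that $\varphi(D_{2})\subseteq\mathfrak{gl}(V/W)$ is reductive (Lemma~2.1, since $V/W$ is finite-dimensional irreducible), and then shows that $\varphi(\tilde D)=\varphi(\mathcal{S}_{N}'\otimes M)$ is a \emph{solvable} ideal of this reductive algebra, hence central, hence scalar. When $M^{k}=0$ for some $k$ this is immediate. When $M$ is not nilpotent, one must prove that $\mathcal{S}_{N}'\otimes M^{k}\subseteq\ker\rho$ for some $k$: this is done by a finite-dimensionality trick (extract a linear dependence among $\{D(u_{i},\textbf{r}_{i})b_{1}+J\}$ modulo $J=\ker\rho$) followed by iterated commutators inside $\mathcal{S}_{N}'$ to strip terms and then to reach an arbitrary $D(w,\textbf{t})$. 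The constraint $(u,\textbf{r})=0$ enters precisely here, in choosing auxiliary $D(v,\textbf{s})$ with $(v,\textbf{s})=0$ so that the brackets stay in $\mathcal{S}_{N}'$ while still producing nonzero output; this is the replacement for the ``engineering'' you allude to in your last paragraph, and it is not reducible to commuting with Laurent monomials.
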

\begin{proof}
     By Proposition 3.10, $V/W$ is an irreducible $D_{2}$-module. Let $\varphi: D_{2}\to \mathfrak{gl}(V/W) $ be the above Lie algebra homomorphism arising from the $D_{2}$-module structure on $V/W$ and $\rho = \varphi|_{\tilde{D}}$. We consider the following two possibilities.
    \begin{enumerate}
        \item If $M^{k}=0$ for some $k\in \mathbb{N}$. \\
        We know that $[\mathcal{S}_{N}'\otimes M, \mathcal{S}_{N}'\otimes M]= \mathcal{S}_{N}'\otimes M^{2}$, $[\mathcal{S}_{N}'\otimes M^{i}, \mathcal{S}_{N}'\otimes M^{j}]= \mathcal{S}_{N}'\otimes M^{i+j}(i,j\geq 1)$. This implies that $\tilde{D}=\mathcal{S}_{N}'\otimes M$ is a nilpotent ideal in $D_{2}$, hence $\varphi(\tilde{D})$  is a solvable ideal in $D_{2}$. As a consequence of Lemma 2.1, we see that $\varphi(D_{2})$ is a reductive Lie algebra and $\varphi(\tilde{D})$ being a solvable ideal of the reductive algebra $\varphi(D_{2})$, act as scalars on $V/W$. As a consequence, the operators $D(u,\textbf{r})b- \psi(b)D(u,\textbf{r})$ act as scalars on $V/W$. Therefore $D(u,\textbf{r})b - \psi(b)D(u,\textbf{r}) =\lambda(u,\textbf{r},b)$ on $V/W$, where $\lambda(u,\textbf{r},b)\in \mathbb{C}$.
\item If $M^{k}\neq 0$ for all $k\in \mathbb{N}$. Then it is enough to show that $\mathcal{S}_{N}'\otimes M^{k}$ act trivially  on $V/W$ for some $k\in \mathbb{N}$. Because $[(\mathcal{S}_{N}\otimes M)/\text{ker}\varphi, (\mathcal{S}_{N}'\otimes M^{i})/\text{ker}\varphi]=(\mathcal{S}_{N}'\otimes M^{i+1})/\text{ker}\varphi$ for all $i\in \mathbb{N}$, which gives that $(\mathcal{S}_{N}\otimes M)/\text{ker}\varphi$ is a solvable ideal of the reductive Lie algebra $D_{2}/\text{ker}\varphi$. Therefore $\varphi(\tilde{D})$ act as scalars on $V/W$.  \par 
We now proceed to show that $\mathcal{S}_{N}'\otimes M^{k}$ acts trivially on $V/W$ for some $k\in \mathbb{N}$. 
\par
\textbf{Claim 1:} $D(u,\textbf{r})\otimes M^{n}\subset J$, for some $u\in \mathbb{C}^{N}\setminus\{\textbf{0}\}$, $\textbf{r} \in \mathbb{Z}^{N}\setminus\{\textbf{0}\}$ with $(u,\textbf{r})=0$ and $n\in \mathbb{N}$. 

\par Let $d= \text{dim}((\mathcal{S}_{N}\otimes M)/J)$, where $J=\text{ker}\rho$.
    For any $ b_{1}\in M$, consider the following set  \begin{equation*} \{D(u_{i},\textbf{r}_{i})b_{1}+J \mid  u_{i} \in \mathbb{C}^{N}\setminus\{0\}, \textbf{r}_{i} \in \mathbb{Z}^{N}\setminus \{0\}, (u_{i},\textbf{r}_{i})=0 \: \text{and} \: \textbf{r}_{i} \notin \mathbb{Q}\textbf{r}_{j} \: \forall \: i\neq j \in \mathbb{Z}  \}.\end{equation*}
    It is evident that the above set is an infinite set. As $\text{dim}(\mathcal{S}_{N}\otimes M)/J=d$, there is a nonzero element $X= \sum_{i=1}^{k} D(u_{i},\textbf{r}_{i})b_{1} \in J$ for some $k\leq d+1$.
    For any $ 0\neq v\in \textbf{r}_{1}^\perp$ and $b_{2}\in M$, consider the following commutator:
    \begin{equation*}
      X'=  [X, D(v,-\textbf{r}_{1})b_{2}]=\sum_{i=2}^{k} D(-(u_{i},\textbf{r}_{1})v-(v,\textbf{r}_{i})u_{i},\textbf{r}_{i}-\textbf{r}_{1})b_{1}b_{2}.
    \end{equation*}
    Our objective is to ensure that the first term in the RHS of the expression above is nonzero, so we choose $v\in \textbf{r}_{1}^{\perp}$ such that $u_{2}'=-(u_{2},\textbf{r}_{1})v-(v,\textbf{r}_{2})u_{2}\neq 0$. We shall demonstrate the existence of such a vector $v$.  
    \par If $(u_{2},\textbf{r}_{1})=0$, then we choose $v\in \textbf{r}_{1}^{\perp}$ but $v\notin \textbf{r}_{2}^{\perp}$, such a choice is always possible as $\textbf{r}_{1} \notin \mathbb{Q}\textbf{r}_{2} $. If $(u_{2},\textbf{r}_{1})\neq 0$, then for any $0\neq v \in \textbf{r}_{1}^{\perp}$, it is easy to see that $-(u_{2},\textbf{r}_{1})v-(v,\textbf{r}_{2})u\neq 0 $. Therefore, in either case , we obtain $v\in \textbf{r}_{1}^{\perp}$ such that $ u_{2}'=-(u_{2},\textbf{r}_{1})v-(v,\textbf{r}_{2})u_{2}\neq 0$. If $u_{i}'=-(u_{i},\textbf{r}_{1})v-(v,\textbf{r}_{i})u_{i}=0$ for all $ i\geq 3 $, then we are done. If $u_{i}'\neq 0$ for some $i\geq 3$. Then it follows that $X'$ is the sum of atmost $k-1$ terms and for some $v'\in \textbf{r}_{1}^{\perp}$, we observe that
    \begin{equation*}
        X^{[2]}=[X',D(v', \textbf{r}_{1})b_{3}]= \sum_{i=2}^{k} D(u_{i},\textbf{r}_{i})b_{1}b_{2}b_{3}.
    \end{equation*}
        We remark that the above expression involves a slight abuse of notation, as we have again used the notation $u_{i}$ to denote the vectors $(u_{i}',r_{1})v' -(v',r_{i})u_{i}'$, for the sake of clarity. Here we again choose $v'\in \textbf{r}_{1}^{\perp }$ such that $u_{2}=(u_{2}',\textbf{r}_{1})v' -(v,\textbf{r}_{2})u_{2}'\neq 0$. If $u_{i}=0$ for all $i\geq 3$, then we are done. If this is not the case, we proceed analogously to the initial steps, we get that $D(u,\textbf{r})b_{1}b_{2}b_{3}\ldots b_{n}\in J$, for some $u\in \mathbb{C}^{N}\setminus \{0\}, \textbf{r} \in \mathbb{Z}^{N}\setminus \{0\}$, any $b_{1},\ldots,b_{n}\in M$ and $n\leq d+1$.  \\
    \textbf{Claim 2:} $\mathcal{S}_{N}'\otimes M^{n+2}\subset J$. 
    
\textbf{Subclaim 2.1:}    \par If $ \textbf{t}\in \mathbb{Z}^{N}$ is such that $(u,\textbf{t})\neq 0$, then we can find $\textbf{s}\in \mathbb{Z}^{N}\setminus\{\textbf{0}\} $ and $ v\in \textbf{s}^{\perp}$ such that $[D(u,\textbf{r}), D(v,\textbf{s})]= D(w,\textbf{t})$, for any $\textbf{0}\neq w\in \textbf{t}^{\perp}$. \\We now prove the existence of such $\textbf{s}\in \mathbb{Z}^{N}\setminus\{\textbf{0}\}$ and $\textbf{0}\neq v\in \textbf{s}^{\perp}$. Simplifying the commutator in the equation above, we have $ D((u,\textbf{s})v-(v,\textbf{r})u,\textbf{r}+\textbf{s})=D(w,\textbf{t})$. Consequently, the problem of determining a solution $D(v,\textbf{s}) \in \mathcal{S}_{N}'$ is equivalent to finding $\textbf{s}\in \mathbb{Z}^{N}\setminus\{\textbf{0}\}$ and $v\in \textbf{s}^\perp$ such that $(u,\textbf{s})v-(v,\textbf{r})u= w$ and $\textbf{r}+\textbf{s}=\textbf{t}$. \par 
    
    Take $\textbf{s}= \textbf{t}-\textbf{r}$, then $(u,\textbf{r})=0$ and $(u,\textbf{t})\neq 0$ implies $(u,\textbf{s})\neq 0$. We consider the following two cases for $(w,\textbf{s})$ : either $(w,\textbf{s})\neq 0 $ or $(w,\textbf{s})=0$. If $(w,\textbf{s})\neq 0$, then there exists $0\neq a\in \mathbb{C}  $, such that $(w,\textbf{s})=a(u,\textbf{s})$, then take $v=\frac{1}{(u,\textbf{s})}(w-au)$. Note that $0\neq v\in \textbf{s}^{\perp}$ and $(u,\textbf{s})v-(v,\textbf{r})u =w$ and  $\textbf{r}+\textbf{s}=\textbf{t}$. Again, if $(w,\textbf{s})=0$, then $(w,\textbf{t})=0$ gives that $(w,\textbf{s})=0$, so that $0 \neq w\in \textbf{s}^{\perp}\cap \textbf{r}^{\perp}$(for $N\geq 3$), in this case we take $v=\frac{1}{(u,\textbf{s})}w$, and we are done. Further, observe that when $ N=2$, the case $(w,\textbf{s})\neq 0$ cannot occur for any nonzero $w$, as $ \textbf{r}\notin \mathbb{Q}\textbf{s}$, and $(w,\textbf{s})=0$ implies $(w,\textbf{s})=0$ and hence $w=\textbf{0}$, which is not the case.   In the case $(u,\textbf{t})\neq 0$, $D(v,\textbf{s})$ is the solution to the equation above, where $v= \frac{1}{(u,\textbf{s})}(w-au)$ and  $\textbf{s}=\textbf{t}-\textbf{r}$. Consequently, for any $ b_{n+1},b_{n+2}\in M$, we have
    \begin{equation}
        [D(u,\textbf{r})b_{1}\cdots b_{n},D(v,\textbf{s})b_{n+1}b_{n+2}] = D(w,\textbf{t})b_{1}\cdots b_{n+2} \in J, \; \text{for} \; (u,\textbf{t})\neq 0.
    \end{equation}If $(u,\textbf{t})=0$, then the above methods can no longer be utilized; but however, we can move the points $u\in \mathbb{C}^{N}( \text{resp.} \: \textbf{r}\in \mathbb{Z}^{N})$ to $u'\in \mathbb{C}^{N}(\text{resp.}\: \textbf{r}'\in \mathbb{Z}^{N})$ such that $(u', \textbf{t})\neq 0$, so that above method can then be applied. Thus, it remains to find $\textbf{s}\in \mathbb{Z}^{N}\setminus \{0\}$, and $\textbf{0}\neq v \in \textbf{s}^{\perp}$ such that $(u',\textbf{t})\neq 0$, where $u'=(u,\textbf{s})v-(v,\textbf{r})u$. Note that for any $\textbf{s} \in \mathbb{Z}^{N}\setminus\{\textbf{0}\}$ and $\textbf{0}\neq v \in \textbf{s}^{\perp}$, we have:
         \begin{align*}
            (u',\textbf{s}')= (u',\textbf{t}-\textbf{r}')&=((u,\textbf{s})v-(v,\textbf{r})u, \textbf{t} ), \; \text{since } \textbf{r}'=\textbf{r}+\textbf{s},\\
             &=(u,\textbf{s})(v,\textbf{t})-(v,\textbf{r})(u,\textbf{t}) \\
             &= (u,\textbf{s})(v,\textbf{t}), \; \text{since} \; (u,\textbf{t})=0
             \end{align*}

             Therefore, showing that $(u',\textbf{s}')\neq 0$ is equivalent to showing that  $(u,\textbf{s})(v,\textbf{t})\neq 0$. We choose $\textbf{s}\in \mathbb{Z}^{N}$, such that $(u,\textbf{s})\neq 0$ and $v\in \textbf{s}^{\perp}$ such that $v \notin \textbf{t}^{\perp}$, i.e. $(v,\textbf{t})\neq 0$. Thus, we have $(u,\textbf{s})(v,\textbf{t})\neq 0$.
            For such $\textbf{s}\in \mathbb{Z}^{N}$ and $v\in \textbf{s}^{\perp}$, we have 
            \begin{equation*}
                [D(u,\textbf{r}), D(v,\textbf{s})]= D(u',\textbf{r}'),
            \end{equation*}
            where $u'\in \mathbb{C}^{N}\setminus \{0\}$ and $\textbf{r}'\in \mathbb{Z}^{N}\setminus\{0\}$ are such that $(u',\textbf{t})\neq 0$. As $(u',\textbf{t})\neq 0$, we can use Subclaim 2.1 to find $v'\in \textbf{s}^{\perp} $, such that $[D(u',\textbf{r}'), D(v',\textbf{s}')] = D(w,\textbf{t})$. Then for any $b_{n+1},b_{n+2}\in M$, we have  \begin{equation}
                [[D(u,\textbf{r})b_{1}\ldots b_{n}, D(v,\textbf{s})b_{n+1}],D(v',\textbf{s}')b_{n+2}]=D(w,\textbf{s})b_{1}\ldots b_{n+1}b_{n+2}.
            \end{equation}
         From (3.12.18) and (3.12.19), it follows that $\mathcal{S}_{N}'\otimes M^{n+2}\subseteq J$.

      \end{enumerate}.
\end{proof}
By Lemma 3.12, we have $D(u,\textbf{r})b = \psi(b)D(u,\textbf{r}) + \lambda(u,\textbf{r},b)$ on $V/W$, where $\lambda(u,\textbf{r},b)$ is a scalar. Note that $D(u,\textbf{r})b-\psi(b)D(u,r)=D(u,\textbf{r})b-cD(u,0)b - (\psi(b)D(u,\textbf{r})-cD(u,0)b)$, it is easy to see that its preimage under the homomorphism $\pi: D_{1} \to D_{2}$ is $ t^{-r}D(u,\textbf{r})b-cD(u,0)b -(t^{-\textbf{r}}bD(u,\textbf{r})-cD(u,0)b)= t^{-\textbf{r}}D(u,\textbf{r})b-t^{-\textbf{r}}bD(u,\textbf{r})$, therefore the elements  $t^{-\textbf{r}}D(u,\textbf{r})b-t^{-\textbf{r}}bD(u,\textbf{r})$ act as scalars,say $\lambda(u,\textbf{r}, b)$ on $V/W$. Thus, we have \begin{equation}
t^{-\textbf{r}}D(u,\textbf{r})b = \psi(b)t^{-\textbf{r}}D(u,\textbf{r}) + \lambda(u,\textbf{r},b) \: \: \text{on $V/W$}.
\end{equation}
By the isomorphism of $D_{1}$-modules $V_{\Lambda}$ and $V/W$, it follows that $t^{-\textbf{r}}D(u,\textbf{r})b-\psi(b)t^{-\textbf{r}}D(u,\textbf{r})$ act as scalar, say $\lambda(u,\textbf{r},b)$ on $V_{\Lambda}$. Note that $\lambda(u,\textbf{r},b)$ is linear bilinear in $u$ and $b$, and that $\lambda(u,\textbf{r},1)=0$.
Now we consider the irreducible $D$-module structure on $V_{\Lambda}$ to understand the action of $D(u,\textbf{0})b$ on $V_{\Lambda}$. Consider \begin{align*}
    [D(v,0)b, t^{-\textbf{r}}D(u,\textbf{r})b']&= t^{-\textbf{r}}[D(v,0)b, D(u,\textbf{r})b']+ [D(v,0)b,t^{-\textbf{r}}]D(u,\textbf{r})b', \\
    & = (v,\textbf{r})t^{-\textbf{r}}D(u,\textbf{r})bb' -(v,\textbf{r})t^{-\textbf{r}}bD(u,\textbf{r})b'.
\end{align*}
Therefore, for any $w\in \mathbb{C}^{N}$ with $(w,\textbf{r})\neq 0$ and $v\in V_{\Lambda}$, we have
\begin{align*}
    [D(w,0)b,t^{-\textbf{r}}D(u,\textbf{r})b']\cdot v = (w,\textbf{r})(\psi(bb')t^{-\textbf{r}}D(u,\textbf{r}) + \lambda(u,\textbf{r},bb'))\cdot v \\ -(w,\textbf{r})\psi(b)(\psi(b')t^{-\textbf{r}}D(u,\textbf{r})+ \lambda(u,\textbf{r},b'))\cdot v \\ =(w,\textbf{r})(\lambda(u,\textbf{r},bb')-\psi(b)\lambda(u,\textbf{r},b'))\cdot v .
    \end{align*}
Note that since $(w,\textbf{r})\neq 0$, and $ [D(w,\textbf{0}),t^{-\textbf{r}}D(u,\textbf{r})b']\in \mathfrak{sl}(V_{\Lambda})$, we must have $\lambda(u,\textbf{r},bb')-\psi(b)\lambda(u,\textbf{r},b')=0$, i.e. $\lambda(u,\textbf{r},bb')=\psi(b)\lambda(u,\textbf{r},b') $, for every $ b,b'\in B$, $u\in \mathbb{C}^{N}$, $\textbf{r}\in \mathbb{Z}^{N}$ with $(u,\textbf{r})=0$. In particular, for $b'=1$, we have $\lambda(u,\textbf{r},b)=\psi(b)\lambda(u,\textbf{r},1)=0$. Therefore, we have $[D(w,\textbf{0})b, t^{-\textbf{r}}D(u,\textbf{r})b']=0$.

\begin{corollary} The elements D(u,0)b act as scalars on $V_{\Lambda}$.
\end{corollary}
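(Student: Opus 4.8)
The plan is to deduce the statement from Schur's lemma applied to the finite-dimensional irreducible $D$-module $V_{\Lambda}$ of Lemma 3.9(2). Let $\varphi\colon D\to\mathfrak{gl}(V_{\Lambda})$ be the representation afforded by that lemma, and fix $w\in\mathbb{C}^{N}$ and $b\in B$. If $w=\textbf{0}$ then $D(w,\textbf{0})b=0$ and there is nothing to prove, so assume $w\neq\textbf{0}$. On $V_{\Lambda}$ we have $T(w,\textbf{0},1,b)=t^{\textbf{0}}D(w,\textbf{0})b=c\,\phi(1)D(w,\textbf{0})b=c\,D(w,\textbf{0})b$, so $D(w,\textbf{0})b=\tfrac1c\,\varphi(T(w,\textbf{0},1,b))$ is an operator in $\varphi(D)$. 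Since $\varphi(D)$ acts irreducibly on the finite-dimensional space $V_{\Lambda}$ over $\mathbb{C}$, it suffices to show that $D(w,\textbf{0})b$ commutes with all of $\varphi(D)$; Schur's lemma then forces it to be a scalar.

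First I would record that $D(w,\textbf{0})b$ already commutes with two natural families of operators. The computation preceding the statement gives $[D(w,\textbf{0})b,\,t^{-\textbf{r}}D(u,\textbf{r})b']=0$ for every $\textbf{r}$ with $(w,\textbf{r})\neq 0$ (and $(u,\textbf{r})=0$, $b'\in B$); call this family $\mathcal{F}_{1}$. Secondly, the degree-$\textbf{0}$ part of $\mathcal{S}_{N}$ is the abelian subalgebra $\mathfrak{h}$, since $[D(w,\textbf{0}),D(v,\textbf{0})]=D((w,\textbf{0})v-(v,\textbf{0})w,\textbf{0})=0$; hence $[D(w,\textbf{0})b,\,D(v,\textbf{0})b'']=0$ for all $v$ and $b''$, giving a second family $\mathcal{F}_{0}$ of operators commuting with $D(w,\textbf{0})b$.

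The remaining point is that $\mathcal{F}_{1}\cup\mathcal{F}_{0}$ generates $\varphi(D)$ as a Lie algebra. On $V_{\Lambda}$ the algebra $\varphi(D)$ is spanned by the images of $T(u,\textbf{r},b_{1},b_{2})$, which for $\textbf{r}=\textbf{0}$ lie in $\mathcal{F}_{0}$ and for $\textbf{r}\neq\textbf{0}$ are scalar multiples of $t^{-\textbf{r}}D(u,\textbf{r})b_{2}$; thus only the operators with $\textbf{r}\neq\textbf{0}$ and $(w,\textbf{r})=0$ must be produced from $\mathcal{F}_{1}$. Given such an $\textbf{r}_{0}$, I would write $\textbf{r}_{0}=\textbf{r}+\textbf{s}$ with $(w,\textbf{r})\neq 0$, so that $(w,\textbf{s})=-(w,\textbf{r})\neq 0$, and invoke the bracket formula of Lemma 3.9,
\[ [T(u,\textbf{r},b_{1},b_{2}),T(v,\textbf{s},b_{3},b_{4})]=T(w',\textbf{r}_{0},b_{1}b_{3},b_{2}b_{4})-(u,\textbf{s})T(v,\textbf{s},b_{1}b_{2}b_{3},b_{4})+(v,\textbf{r})T(u,\textbf{r},b_{1}b_{3}b_{4},b_{2}), \]
where $w'=(u,\textbf{s})v-(v,\textbf{r})u$. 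The two correction terms have degrees $\textbf{s}$ and $\textbf{r}$, both off the hyperplane $w^{\perp}$, so they lie in $\mathcal{F}_{1}$; together with the two bracket factors (also in $\mathcal{F}_{1}$) this exhibits the degree-$\textbf{r}_{0}$ operator $t^{-\textbf{r}_{0}}D(w',\textbf{r}_{0})b_{2}b_{4}$ as a Lie polynomial in elements of $\mathcal{F}_{1}$. Since $\operatorname{ad}(D(w,\textbf{0})b)$ is a derivation of $\varphi(D)$ vanishing on the generating set $\mathcal{F}_{1}\cup\mathcal{F}_{0}$, it vanishes on all of $\varphi(D)$, so $D(w,\textbf{0})b$ is central and hence scalar on $V_{\Lambda}$.

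I expect the main obstacle to be precisely this generation step. One must verify that, as $u\in\textbf{r}^{\perp}$ and $v\in\textbf{s}^{\perp}$ vary, the leading vectors $w'=(u,\textbf{s})v-(v,\textbf{r})u$ exhaust all of $\textbf{r}_{0}^{\perp}$ (so that every $t^{-\textbf{r}_{0}}D(u_{0},\textbf{r}_{0})b'$ with $(u_{0},\textbf{r}_{0})=0$ is reached) while keeping both $\textbf{r}$ and $\textbf{s}=\textbf{r}_{0}-\textbf{r}$ off $w^{\perp}$. This is a linear-algebra check in the spirit of the computations in Lemma 3.12, and the degenerate configurations, in particular the case $N=2$, will need to be handled separately.
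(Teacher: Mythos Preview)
Your approach via Schur's lemma for the irreducible $D$-module $V_{\Lambda}$ is exactly the paper's one-line argument, but you have made the commutation step harder than it is and left yourself with an unfinished ``generation'' problem that is in fact unnecessary.

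Look again at the bracket computation preceding the corollary. For \emph{arbitrary} $v\in\mathbb{C}^{N}$ it reads
\[
[D(v,\textbf{0})b,\,t^{-\textbf{r}}D(u,\textbf{r})b']
=(v,\textbf{r})\bigl(t^{-\textbf{r}}D(u,\textbf{r})bb'-t^{-\textbf{r}}bD(u,\textbf{r})b'\bigr),
\]
which carries an overall factor $(v,\textbf{r})$. Hence when $(w,\textbf{r})=0$ the commutator $[D(w,\textbf{0})b,\,t^{-\textbf{r}}D(u,\textbf{r})b']$ is already $0$ as an operator, with no trace argument needed. Combined with the case $(w,\textbf{r})\neq 0$ (handled exactly as you describe) and with $[D(w,\textbf{0})b,D(v,\textbf{0})b'']=0$, this shows directly that $D(w,\textbf{0})b$ commutes with \emph{every} spanning element $T(u,\textbf{r},b_{1},b_{2})$ of $D$, not just those in your $\mathcal{F}_{1}\cup\mathcal{F}_{0}$. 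Schur's lemma then finishes the proof immediately.

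So the ``main obstacle'' you flag---producing the degree-$\textbf{r}_{0}$ operators with $(w,\textbf{r}_{0})=0$ as Lie polynomials in $\mathcal{F}_{1}$, with special care for $N=2$---simply does not arise. Once you drop that detour your argument coincides with the paper's.
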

\begin{proof}
    Use Schur's lemma for the irreducible $D$-module $V_{\Lambda}$.
\end{proof}
Let $D(u,\textbf{0})b$ acts by the scalar $ f(u,b)$, then $f(u,b)$ is linear in both of its arguments and $f(u,1)=(u,\Lambda)$. As a consequence of Cor. 3.13, we see that the elements $D(u,0)b $ act as scalars($f(u,b)+(u,\textbf{r})\psi(b)$) on each graded component $V_{\Lambda+\textbf{r}}$. 
\begin{corollary}
    The elements $D(u,\textbf{r})b$ act as $\psi(b)D(u,\textbf{r})$ in $V_{\Lambda}$.
\end{corollary}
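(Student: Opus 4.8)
The plan is to read the result off directly from the relation established on $V_{\Lambda}$ immediately before the statement, combined with the injectivity of the operators $t^{\textbf{s}}$ ($\textbf{s}\neq\textbf{0}$) that was proved via Proposition 3.4 and Theorem 3.5. Recall that the bracket computation carried out using the $D$-module structure on $V_{\Lambda}$ forced the scalar $\lambda(u,\textbf{r},b)$ to vanish: from $\lambda(u,\textbf{r},bb')=\psi(b)\lambda(u,\textbf{r},b')$ together with $\lambda(u,\textbf{r},1)=0$ one obtains $\lambda(u,\textbf{r},b)=0$ for every $b\in B$, $u\in\mathbb{C}^{N}$ and $\textbf{r}\in\mathbb{Z}^{N}\setminus\{\textbf{0}\}$ with $(u,\textbf{r})=0$. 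Hence the preceding identity collapses on $V_{\Lambda}$ to $t^{-\textbf{r}}D(u,\textbf{r})b=\psi(b)\,t^{-\textbf{r}}D(u,\textbf{r})$.

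First I would rewrite this identity as $t^{-\textbf{r}}\bigl(D(u,\textbf{r})b-\psi(b)D(u,\textbf{r})\bigr)=0$ on $V_{\Lambda}$. The operator in parentheses sends $V_{\Lambda}$ into $V_{\Lambda+\textbf{r}}$, and $t^{-\textbf{r}}$ restricts to a map $V_{\Lambda+\textbf{r}}\to V_{\Lambda}$. Since $-\textbf{r}\neq\textbf{0}$, this restriction is the restriction of the globally injective operator $t^{-\textbf{r}}$ on $V$, and is therefore itself injective. Cancelling it yields $D(u,\textbf{r})b-\psi(b)D(u,\textbf{r})=0$, that is, $D(u,\textbf{r})b=\psi(b)D(u,\textbf{r})$ as maps $V_{\Lambda}\to V_{\Lambda+\textbf{r}}$, which is precisely the claim.

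Almost all of the substantive work is already behind us: the genuine difficulty lay in proving $\lambda(u,\textbf{r},b)=0$, which rested on the reductive/solvable-ideal analysis of Lemma 3.12 and on the commutator computation with $D(w,\textbf{0})b$ landing in $\mathfrak{sl}(V_{\Lambda})$. Given that, the corollary reduces to a single cancellation, so the one point demanding care is to verify that this cancellation is legitimate, i.e. that $t^{-\textbf{r}}$ genuinely acts injectively on the weight space $V_{\Lambda+\textbf{r}}$ rather than merely as a formal symbol. This is exactly what the injectivity of all $t^{\textbf{s}}$, $\textbf{s}\neq\textbf{0}$, guarantees, so no further obstacle arises.
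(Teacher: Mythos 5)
Your proposal is correct and follows essentially the same route as the paper: both start from the identity $t^{-\textbf{r}}D(u,\textbf{r})b=\psi(b)\,t^{-\textbf{r}}D(u,\textbf{r})$ on $V_{\Lambda}$ (valid once $\lambda(u,\textbf{r},b)=0$ is known) and then cancel the factor $t^{-\textbf{r}}$. The only cosmetic difference is the cancellation mechanism: you invoke the injectivity of $t^{-\textbf{r}}$ on $V_{\Lambda+\textbf{r}}$ directly, whereas the paper composes with $t^{\textbf{r}}$ and uses the quasi-associativity relation $t^{\textbf{r}}t^{-\textbf{r}}=c\lambda_{-\textbf{r},\textbf{r}}\neq 0$; these are equivalent and both legitimate.
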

\begin{proof}
We have shown that $\lambda(u,\textbf{r},b)=0$, therefore (3.12.20) becomes
\begin{equation}
    t^{-\textbf{r}}D(u,\textbf{r})b\cdot v =\psi(b)t^{-\textbf{r}}D(u,\textbf{r})\cdot v, \: \: \text{for $v \in V_{\Lambda}$}.
\end{equation}
Operating on both sides of (3.14.21) by $t^{r}$ and using the fact $t^{\textbf{r}}t^{-\textbf{r}}=\lambda_{-\textbf{r},\textbf{r}}t^{0}=c\lambda_{-\textbf{r},\textbf{r}}$ on $V$, we obtain 
\begin{equation*}
    D(u,\textbf{r})b=\psi(b)D(u,\textbf{r}) \; \text{on} \; V_{\Lambda}.
\end{equation*}

\end{proof}
The following corollary provides an immediate proof of this equality throughout module $V$.
\begin{corollary}
    For any $v\in V$, we have $D(u,\textbf{r})b\cdot v=\psi(b)D(u,\textbf{r})\cdot v$ for every $u\in \mathbb{C}^{N}, \textbf{r}\in \mathbb{Z}^{N}\setminus\{\textbf{0}\}\}$ with $(u,\textbf{r})=0$.
\end{corollary}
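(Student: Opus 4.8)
The plan is to bootstrap the identity from the fixed weight space $V_{\Lambda}$, where it is already known by Corollary 3.14, to the whole of $V$. Recall from the opening of Subsection 3.2 that $V=\bigoplus_{\textbf{s}\in\mathbb{Z}^{N}}V_{\Lambda+\textbf{s}}$ and that $t^{\textbf{s}}\cdot V_{\Lambda}=V_{\Lambda+\textbf{s}}$, with $t^{\textbf{s}}$ acting bijectively from $V_{\Lambda}$ onto $V_{\Lambda+\textbf{s}}$. Hence every homogeneous vector of $V$ can be written as $t^{\textbf{s}}\cdot v_{0}$ for some $v_{0}\in V_{\Lambda}$, and by linearity it suffices to prove $D(u,\textbf{r})b\cdot(t^{\textbf{s}}v_{0})=\psi(b)D(u,\textbf{r})\cdot(t^{\textbf{s}}v_{0})$ for all $\textbf{s}\in\mathbb{Z}^{N}$ and $v_{0}\in V_{\Lambda}$.

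The computation I would carry out is to commute $D(u,\textbf{r})b=D(u,\textbf{r})\otimes b$ through $t^{\textbf{s}}=t^{\textbf{s}}\otimes 1$ using the bracket $[D(u,\textbf{r})b,\,t^{\textbf{s}}]=(u,\textbf{s})\,t^{\textbf{r}+\textbf{s}}b$ in $(\mathcal{S}_{N}\ltimes A_{N})\otimes B$, which gives
\begin{equation*}
D(u,\textbf{r})b\cdot t^{\textbf{s}}v_{0}=t^{\textbf{s}}\cdot\bigl(D(u,\textbf{r})b\cdot v_{0}\bigr)+(u,\textbf{s})\,t^{\textbf{r}+\textbf{s}}b\cdot v_{0}.
\end{equation*}
Since $v_{0}\in V_{\Lambda}$, Corollary 3.14 rewrites the first summand as $\psi(b)\,t^{\textbf{s}}D(u,\textbf{r})v_{0}$. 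When $\textbf{r}+\textbf{s}\neq\textbf{0}$, Proposition 3.7 rewrites the second as $(u,\textbf{s})\psi(b)\,t^{\textbf{r}+\textbf{s}}v_{0}$; factoring out $\psi(b)$ and reversing the same commutation relation then reassembles the right-hand side into $\psi(b)D(u,\textbf{r})\cdot t^{\textbf{s}}v_{0}$, as desired.

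The only delicate point—and I expect it to be the sole obstacle—is the degree-zero contribution that appears when $\textbf{s}=-\textbf{r}$, so that $\textbf{r}+\textbf{s}=\textbf{0}$. There Proposition 3.7 is unavailable, because $t^{\textbf{0}}b$ acts by the scalar $c\phi(b)$ rather than by $\psi(b)t^{\textbf{0}}$; a surviving $t^{\textbf{0}}b$ term would break the argument. This is exactly where the divergence-zero hypothesis enters: the coefficient of the offending term is $(u,\textbf{s})=(u,-\textbf{r})=-(u,\textbf{r})=0$, so it simply vanishes. Symmetrically, $[D(u,\textbf{r}),t^{-\textbf{r}}]=(u,-\textbf{r})t^{\textbf{0}}=0$ on the right-hand side, and one checks directly that both sides equal $\psi(b)\,t^{-\textbf{r}}D(u,\textbf{r})v_{0}$. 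Thus both cases yield the claimed equality on each $V_{\Lambda+\textbf{s}}$, and since these span $V$ the identity holds on all of $V$.
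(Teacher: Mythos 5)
Your argument is correct and is essentially the paper's own proof: decompose $V$ into the graded pieces $V_{\Lambda+\textbf{s}}=t^{\textbf{s}}\cdot V_{\Lambda}$, commute $D(u,\textbf{r})b$ past $t^{\textbf{s}}$, and apply Corollary 3.14 together with Proposition 3.7 to reassemble $\psi(b)D(u,\textbf{r})$. Your explicit treatment of the $\textbf{r}+\textbf{s}=\textbf{0}$ case, where the coefficient $(u,\textbf{s})=-(u,\textbf{r})=0$ kills the term on which Proposition 3.7 would fail, is a point the paper leaves implicit, but the route is the same.
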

\begin{proof}
    If is enough to show the result for any homogeneous graded component. Let $v'\in V_{\Lambda +\textbf{r}}=t^{\textbf{r}}V_{\Lambda}$, then $v'=v\otimes t^{\textbf{r}}$ for some $v\in V_{\lambda}$. Consider \begin{align*}D(u,\textbf{s})b.v'&= D(u,\textbf{s})b.t^{\textbf{r}}.v, \\ &=[D(u,\textbf{s})b,t^{\textbf{r}}]\cdot v + t^{r}D(u,\textbf{s})b\cdot v, \\ &= (u,\textbf{r})t^{\textbf{r}+\textbf{s}}(b) \cdot v + \psi(b)t^{\textbf{r}}D(u,\textbf{s})\cdot v, \\ &= \psi(b)(u,\textbf{r})t^{\textbf{r}+\textbf{s}}\cdot v + \psi(b)t^{\textbf{r}}D(u,\textbf{s})\cdot v \\ &= \psi(b)D(u,\textbf{s})\cdot t^{\textbf{r}}\cdot v,  \\ 
    &= \psi(b)D(u,\textbf{s})\cdot v'.\end{align*}
\end{proof}
We shall now proceed to show that $V$ is an irreducible weight module for the underlying Lie algebra $(\mathcal{S}_{N}\ltimes A_{N})$. For any $0\neq v \in V_{\Lambda + \textbf{r}}$, using Prop. 3.7 and Cor. 3.15, we have \begin{equation}
    \mathcal{U}((\mathcal{S}_{N}\ltimes A_{N})\otimes B)\cdot v=\mathcal{U}(\mathcal{S}_{N}\ltimes A_{N})\cdot v.\end{equation} If we show that this equality also holds for any $0\neq v\in V$, then we are done. For this purpose, we prove the following important lemma. 
    \begin{lemma}
        For any $0\neq v \in V$, we have $\mathcal{U}(\mathcal{S}_{N}\ltimes A_{N})\cdot v = V$.
        
    \end{lemma}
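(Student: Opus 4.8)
The plan is to derive the irreducibility of $V$ over $\mathcal{S}_{N}\ltimes A_{N}$ from its irreducibility over $(\mathcal{S}_{N}\ltimes A_{N})\otimes B$, by reducing an arbitrary (possibly non-homogeneous) vector $v$ to a single nonzero weight component and then invoking the equality $\mathcal{U}((\mathcal{S}_{N}\ltimes A_{N})\otimes B)\cdot v=\mathcal{U}(\mathcal{S}_{N}\ltimes A_{N})\cdot v$ of (3.15.22), which is already available for homogeneous $v$. Indeed, once $\mathcal{U}(\mathcal{S}_{N}\ltimes A_{N})\cdot v=V$ is shown for every nonzero $v$, the lemma is exactly the statement that $V$ is irreducible over the underlying extended Special algebra.

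First I would fix $0\neq v\in V$ and write $v=\sum_{i=1}^{k}v_{i}$ as its decomposition into nonzero weight components $v_{i}\in V_{\mu_{i}}$, with $\mu_{1},\dots,\mu_{k}\in\mathfrak{h}^{*}$ pairwise distinct. Since each $d_{i}$ is among the spanning elements of $\mathcal{S}_{N}$, we have $\mathfrak{h}\subseteq \mathcal{S}_{N}\subseteq \mathcal{S}_{N}\ltimes A_{N}$, so $\mathcal{U}(\mathfrak{h})\cdot v\subseteq \mathcal{U}(\mathcal{S}_{N}\ltimes A_{N})\cdot v$. The key step is a weight-separation argument: because the $\mu_{i}$ are distinct linear functionals, the set of $h\in\mathfrak{h}$ with $\mu_{i}(h)=\mu_{j}(h)$ for some $i\neq j$ is a finite union of proper hyperplanes, so I can choose $h\in\mathfrak{h}$ with $\mu_{1}(h),\dots,\mu_{k}(h)$ pairwise distinct. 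Then $h^{j}\cdot v=\sum_{i}\mu_{i}(h)^{j}v_{i}$ for $0\le j\le k-1$, and the Vandermonde matrix $\bigl(\mu_{i}(h)^{j}\bigr)$ is invertible; inverting it expresses each $v_{i}$ as a $\mathbb{C}$-linear combination of the vectors $h^{j}\cdot v$. In particular $v_{1}\in\mathcal{U}(\mathfrak{h})\cdot v\subseteq\mathcal{U}(\mathcal{S}_{N}\ltimes A_{N})\cdot v$.

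Now $v_{1}$ is a nonzero homogeneous vector, so the irreducibility of $V$ as a $(\mathcal{S}_{N}\ltimes A_{N})\otimes B$-module gives $\mathcal{U}\bigl((\mathcal{S}_{N}\ltimes A_{N})\otimes B\bigr)\cdot v_{1}=V$, and (3.15.22) then yields $\mathcal{U}(\mathcal{S}_{N}\ltimes A_{N})\cdot v_{1}=\mathcal{U}\bigl((\mathcal{S}_{N}\ltimes A_{N})\otimes B\bigr)\cdot v_{1}=V$. Combining this with $v_{1}\in\mathcal{U}(\mathcal{S}_{N}\ltimes A_{N})\cdot v$ gives $V=\mathcal{U}(\mathcal{S}_{N}\ltimes A_{N})\cdot v_{1}\subseteq\mathcal{U}(\mathcal{S}_{N}\ltimes A_{N})\cdot v\subseteq V$, hence equality, which is the assertion of the lemma.

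I expect the Vandermonde separation to be the only genuinely new ingredient here; the reduction of the $\otimes B$-action to the $\mathcal{S}_{N}\ltimes A_{N}$-action on a single weight space has already been packaged into (3.15.22) through Prop.\ 3.7 (the action of $A_{N}'\otimes B$), Cor.\ 3.15 (the action of $D(u,\textbf{r})b$ for $\textbf{r}\neq\textbf{0}$), and the scalar action of $D(u,\textbf{0})b$ and $t^{\textbf{0}}b$. The one point requiring care is that the separating operator $h$ must lie in $\mathfrak{h}$ itself rather than in the larger algebra $(\mathcal{S}_{N}\ltimes A_{N})\otimes B$, so that the extracted component $v_{1}$ genuinely lands in $\mathcal{U}(\mathcal{S}_{N}\ltimes A_{N})\cdot v$; this is guaranteed by $\mathfrak{h}\subseteq\mathcal{S}_{N}$. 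No difficulty arises from the quasi-associativity, since the argument never multiplies two positive-degree elements of $A_{N}\otimes B$.
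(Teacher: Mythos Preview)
Your proof is correct, and it takes a genuinely different route from the paper's. The paper does not extract a homogeneous component from $v$; instead it shows directly that $\mathcal{U}((\mathcal{S}_{N}\ltimes A_{N})\otimes B)\cdot v\subseteq \mathcal{U}(\mathcal{S}_{N}\ltimes A_{N})\cdot v$ for a sum $v=v_{1}+v_{2}$ of two weight vectors (and by iteration for finite sums). The only elements requiring work are the degree-zero generators $D(u,\textbf{0})b$, and the paper computes that
\[
D(u,\textbf{0})b\cdot v=\bigl(f(u,b)-\psi(b)(u,\Lambda)\bigr)v+\psi(b)\,D(u,\textbf{0})\cdot v,
\]
which lies in $\mathbb{C}v+\mathcal{S}_{N}\cdot v\subseteq \mathcal{U}(\mathcal{S}_{N}\ltimes A_{N})\cdot v$; since this action preserves each weight space, one can iterate and conclude by PBW. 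Your Vandermonde separation via $\mathfrak{h}\subseteq\mathcal{S}_{N}$ is more economical: it bypasses the explicit formula for $D(u,\textbf{0})b$ on non-homogeneous vectors and reduces everything to the homogeneous case (3.15.22) already in hand. The paper's computation, on the other hand, yields the slightly stronger statement that (3.15.22) itself holds for \emph{all} nonzero $v$, not just weight vectors, though for the purpose of establishing irreducibility over $\mathcal{S}_{N}\ltimes A_{N}$ both arguments are equivalent.
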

    \begin{proof}
        It is sufficient to prove the result for $v=v_{1}+v_{2}\in V$ with $v_{1}\in V_{\Lambda +\textbf{r}}$ and $v_{2}\in V_{\Lambda + \textbf{s}}$, for $\textbf{r}\neq\textbf{s}\in \mathbb{Z}^{N}$. It suffices to prove that $\mathcal{U}((\mathcal{S}_{N}\ltimes A_{N})\otimes B)\cdot v \subseteq \mathcal{U}(\mathcal{S}_{N}\ltimes A_{N})\cdot v$; to this end, we show that for homogeneous elements $x_{i}$'s of $(S_{N}\ltimes A_{N})\otimes B$, we have $x_{1}x_{2}\ldots x_{n}\cdot v\in \mathcal{U}(S_{N}\ltimes A_{N})$. For homogeneous elements of $(S_{N}\ltimes A_{N})\otimes B$ of nonzero degree $\textbf{r}\neq \textbf{0}$, namely, elements of the form $D(u,\textbf{r})b$ or  $t^{\textbf{r}}b$ for some $\textbf{r}\in \mathbb{Z}^{N}\setminus\{\textbf{0}\}$, we have $D(u,\textbf{r})b =\psi(b)D(u,\textbf{r})$ and $t^{\textbf{r}}b =\psi(b)t^{\textbf{r}}$ on $V$. Therefore, it is sufficient to show that $x_{1}\ldots x_{n}\cdot v \in \mathcal{U}(S_{N}\ltimes A_{N})\cdot v$, for the zero degree elements $x_{i}$ of $(\mathcal{S}_{N}\ltimes A_{N})\otimes B$. Furthermore, since the elements $t^{\textbf{0}}b$ act as scalars $c\phi(b)$ on $V$, therefore, we only consider the zero-degree elements of the form $D(u,\textbf{0})b $. We have $D(u,\textbf{0})b\cdot v = (f(u,b) + (u,\textbf{r})\psi(b) )v_{1} + (f(u,b)+(u,\textbf{s})\psi(b))v_{2} = f(u,b)(v_{1}+v_{2})+\psi(b)((u,\textbf{r})v_{1}+(u,\textbf{s})v_{2})$. Since $D(u,\textbf{0})\cdot v_{1}=(u,\Lambda + \textbf{r})v_{1}$, therefore, we have $D(u,\textbf{0})b\cdot v = f(u,b)(v_{1}+v_{2})+\psi(b)D(u,\textbf{0})\cdot (v_{1}+v_{2}) - (u,\Lambda)(v_{1}+v_{2})$. Note that $f(u,b)-(u,\Lambda)$ is a scalar, and $D(u,\textbf{0})\in S_{N}\ltimes A_{N}$, therefore $D(u,\textbf{0})b\cdot v \in \mathcal{U}(\mathcal{S}_N \ltimes A_{N})\cdot v$. Since the action of degree-zero elements preserves the grading, it does not alter the degree of the graded vectors.  The process can be therefore be repeated to prove that $x_{1}\cdots x_{n}\cdot v \in \mathcal{U}(S_{N}\ltimes A_{N})\cdot v $. The PBW theorem then implies that 
        $\mathcal{U}((\mathcal{S}_{N}\ltimes A_{N})\otimes B)\cdot v\subseteq \mathcal{U}(S_{N}\ltimes A_{N})\cdot v. $\end{proof}

 Lemma 3.16 implies $V$ is an irreducible weight module for $\mathcal{S}_{N}\ltimes A_{N}$ with finite-dimensional weight spaces, where the action of $A_{N}$ is nontrivial and quasi-associative (using Theorem 3.8). Therefore, $V$ is an irreducible elements of the Category $\mathcal{J}$ for the Lie algebra $\mathcal{S}_{N}$. Such modules are classified by Y. Billig and J. Talboom in [1]. 
We now proceed to the following theorem on the structure of $V$.
\begin{theorem}
    Let $V$ be an irreducible uniformly bounded weight module for $(\mathcal{S}_{N}\ltimes A_{N})\otimes B$, with the condition that $t^{\textbf{s}}$ and $t^{\textbf{0}}$ act non-trivially on $V$, for some $0\neq \textbf{s}\in \mathbb{Z}^{N}$, then $V$ can be written as $V=\bigoplus_{\textbf{r}\in \mathbb{Z}^{N}}V_{\Lambda +\textbf{r}}$, where $V_{\Lambda+\textbf{r}}=\{ v\in V\mid D(u,\textbf{0})v =(u,\Lambda +\textbf{r})v \}$ and $\Lambda =(\lambda(d_{i}))_{i=1}^{N}\in \mathbb{C}^{N}$ for any $\lambda \in P(V)$. The elements  $t^{\textbf{r}}$ act injectively on $V$ for every $\textbf{r}\in \mathbb{Z}^{N}$ and the set of weights of $V$ is $P(V)=\lambda +\sum_{i=1}^{N}\mathbb{Z}\delta_{i}$. Moreover, there exists an algebra homomorphism $\psi: B \to \mathbb{C}$ with $\psi(1)=1$, a linear map $\phi:B \to \mathbb{C}$ with $\phi(1)=1$ and a bilinear function $f: \mathbb{C}^{N}\times B \to \mathbb{C}$ with $f(u,1)=(u,\Lambda)$ such that \begin{align*}
        t^{\textbf{r}}(b) &= \psi(b)t^{\textbf{r}} \; \text{on} \; V, \; \text{for} \; \textbf{r}\neq \textbf{0}, \\ 
        t^{\textbf{0}}(b) &=\phi(b)t^{\textbf{0}} \; \text{on} \: V, \; \\ 
        D(u,\textbf{r})b &= \psi(b)D(u,\textbf{r}) \; \text{on $V$, for $\textbf{r}\neq 0$},\\
       D(u,\textbf{0})b\cdot v &= (f(u,b)+(u,\textbf{r})\psi(b))v, \; \text{for} \: v\in V_{\Lambda +\textbf{r}}, \textbf{r}\in \mathbb{Z}^{N}.
    \end{align*}
    Furthermore, the action of $A_{N}\otimes B$ on $V $ is quasi-associative in the following sense :
    \begin{equation*}
        t^{\textbf{m}}(b)t^{\textbf{n}}(b') = \begin{cases}
            t^{\textbf{m}+\textbf{n}}(bb')= \psi(bb')\lambda t^{\textbf{m}+\textbf{n}}, \; \; \text{if} \; \textbf{m},\textbf{n}, \textbf{m}+\textbf{n}\in \mathbb{Z}^{N}\setminus \{0\}, \\
            \psi(bb')\mu t^{0}, \: \text{if} \:  \textbf{m}, \textbf{n} \in \mathbb{Z}^{N}\setminus \{0\}, \textbf{m}+\textbf{n}=\textbf{0}, \\
            \psi(b)\phi(b')ct^{\textbf{m}}, \: \text{if}
 \: \textbf{m}\neq \textbf{0}, \textbf{n}=\textbf{0}, \\
 \phi(b)\phi(b')ct^{\textbf{0}}, \:  \text{if} \: \textbf{m}=\textbf{n}=\textbf{0},\end{cases}
  \end{equation*}
 on V, where $\lambda, \mu$ are nonzero complex numbers with $\lambda^{2}=c\mu$.  In addition, for any $0\neq v\in V$, we have \begin{equation} V= \mathcal{U}((\mathcal{S}_{N}\ltimes A_{N})\otimes B )\cdot v = \mathcal{U}(\mathcal{S}_{N}\ltimes A_{N})\cdot v .\end{equation}
    Therefore, $V$ is an irreducible weight module for $\mathcal{S}_{N}\ltimes A_{N}$ with finite-dimensional weight spaces, in particular $V$ is an irreducible element in the Category $\mathcal{J}$-modules for the Lie algebra $\mathcal{S}_{N}$.
\end{theorem}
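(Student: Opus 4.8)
The plan is to assemble the theorem from the results established throughout this section, since each assertion has in essence already been proved in one of the preceding statements; the task is then to check that the pieces fit together into a single consistent description of $V$. I would begin with the weight decomposition. Choosing $\lambda \in P(V)$ and setting $\Lambda = (\lambda(d_i))_{i=1}^N$, irreducibility forces $P(V) \subseteq \lambda + \sum_{i=1}^N \mathbb{Z}\delta_i$ and hence the grading $V = \bigoplus_{\textbf{r} \in \mathbb{Z}^N} V_{\Lambda + \textbf{r}}$. Since $A_N' = A_N' \otimes 1$ acts nontrivially, Proposition 3.4 together with Theorem 3.5 shows that $t^{\textbf{r}}$ acts injectively for every $\textbf{r} \neq \textbf{0}$; as $t^{\textbf{0}}$ acts by the nonzero scalar $c$, injectivity in fact holds for all $\textbf{r} \in \mathbb{Z}^N$. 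Injectivity together with $t^{\textbf{r}} V_\Lambda = V_{\Lambda + \textbf{r}}$ forces every weight space to have the same dimension, whence $P(V) = \lambda + \sum_{i=1}^N \mathbb{Z}\delta_i$ exactly.

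Next I would record the four action formulas. The identity $t^{\textbf{r}}b = \psi(b)t^{\textbf{r}}$ for $\textbf{r} \neq \textbf{0}$ is precisely Proposition 3.7, which simultaneously supplies the algebra homomorphism $\psi$ with $\psi(1) = 1$. Since $t^{\textbf{0}} \otimes B$ is central and $V$ is irreducible and uniformly bounded, $t^{\textbf{0}}b$ acts by $c\phi(b) = \phi(b)t^{\textbf{0}}$ for a linear map $\phi$ with $\phi(1)=1$. The relation $D(u,\textbf{r})b = \psi(b)D(u,\textbf{r})$ for $\textbf{r} \neq \textbf{0}$ holds on $V_\Lambda$ by Corollary 3.14 and extends to all of $V$ by Corollary 3.15. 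Finally, Corollary 3.13 gives a bilinear scalar $f(u,b)$ with $f(u,1) = (u,\Lambda)$ by which $D(u,\textbf{0})b$ acts on $V_\Lambda$; propagating this across the grading through the bracket $[D(u,\textbf{0})b, t^{\textbf{r}}] = (u,\textbf{r})t^{\textbf{r}}b = (u,\textbf{r})\psi(b)t^{\textbf{r}}$ produces the eigenvalue $f(u,b) + (u,\textbf{r})\psi(b)$ on $V_{\Lambda + \textbf{r}}$.

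The quasi-associativity formula then follows by combining Theorem 3.8, which collapses any product $t^{\textbf{m}}b \cdot t^{\textbf{n}}b'$ to $\lambda_{\textbf{m},\textbf{n},b,b'} t^{\textbf{m}+\textbf{n}}$, with Theorem 3.9, which identifies the structure constants $\lambda_{\textbf{m},\textbf{n}}$ with the three scalars $\lambda, \mu, c$ subject to $\lambda^2 = c\mu$; substituting into this the case distinctions for $\lambda_{\textbf{m},\textbf{n},b,b'}$ recorded just before Theorem 3.8 yields the four displayed cases.

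The generation statement $V = \mathcal{U}((\mathcal{S}_N \ltimes A_N) \otimes B) \cdot v = \mathcal{U}(\mathcal{S}_N \ltimes A_N) \cdot v$ for any nonzero $v$ is Lemma 3.16, and this is where I expect the main obstacle to lie, together with its engine, Lemma 3.12. The first equality is mere irreducibility; the content is the second, namely that every element of $(\mathcal{S}_N \ltimes A_N)\otimes B$ already acts through $\mathcal{S}_N \ltimes A_N$ on $v$. Using the formulas above, the nonzero-degree operators $D(u,\textbf{r})b$ and $t^{\textbf{r}}b$ reduce to $\psi(b)$-multiples of elements of $\mathcal{S}_N \ltimes A_N$, so the only genuinely new operators are the degree-zero $D(u,\textbf{0})b$; the decomposition $D(u,\textbf{0})b \cdot v = (f(u,b) - (u,\Lambda))v + \psi(b)D(u,\textbf{0}) \cdot v$ exhibits these too as lying in $\mathcal{U}(\mathcal{S}_N \ltimes A_N) \cdot v$, since $f(u,b) - (u,\Lambda)$ is a scalar and degree-zero actions preserve the grading. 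Iterating over the letters of a PBW monomial and invoking the PBW theorem closes the argument. With the generation statement in hand, $V$ is an irreducible weight module for $\mathcal{S}_N \ltimes A_N$ with finite-dimensional weight spaces on which $A_N$ acts nontrivially and quasi-associatively, and is therefore an irreducible object of the category $\mathcal{J}$ for $\mathcal{S}_N$, as claimed.
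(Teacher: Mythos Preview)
Your proposal is correct and follows essentially the same approach as the paper: the theorem is a summary statement assembled from the results of Section~3, and the paper's own proof simply reads ``Follows from Corollary 3.13, Corollary 3.14,'' implicitly invoking the full chain of preceding results (Proposition~3.4, Theorem~3.5, Proposition~3.7, Theorems~3.8--3.9, Corollaries~3.13--3.15, and Lemma~3.16) exactly as you have laid out. Your write-up is in fact more explicit than the paper's about which result supplies which clause, but the logical structure is identical.
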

\begin{proof}
Follows from Corollary 3.13, Corollary 3.14.
\end{proof}

\section{Harish-Chandra Modules for Extended Hamiltonian Lie algebras}
We first recall a result due to Talboom on irreducible modules of the category $\mathcal{J}$-modules of the Lie algebra $\mathcal{H}_{N}$.
\begin{theorem}{[\textbf{[30]}, Theorem 5.2]}
    Let $W$ be an irreducible Jet module for $\tilde{\mathcal{H}}_{N}$ having finite-dimensional weight spaces with respect to $\mathfrak{h}$. Then there exists a finite-dimensional irreducible module $V$ for $\mathfrak{sp}_{N}$ and $\alpha,\beta \in \mathbb{C}^{N}$ such that $W\cong V\otimes A_{N}$ with the following actions. \begin{enumerate}
        \item $D(u,\textbf{0})\cdot v\otimes t^{\textbf{k}}=(u,\alpha+\textbf{k})v\otimes t^{\textbf{k}}$.
        \item $h(\textbf{r})\cdot v\otimes t^{\textbf{s}} = (\overline{\textbf{r}},\textbf{s})v\otimes t^{\textbf{r}+\textbf{s}} + (\sum_{i}r_{m+i}\beta_{m+i} + \sum_{i}r_{i}\beta_{i})v\otimes t^{\textbf{r}+\textbf{s}}+ \sum_{i}( r_{m+i}^{2}E_{m+i,i}$ $+ r_{i}r_{m+i}(E_{ii}-E_{m+i,m+i})-r_{i}^{2}E_{i,m+i})v\otimes t^{\textbf{r}+\textbf{s}} + \sum_{i,j}(r_{m+i}r_{m+j}(E_{m+j,i}+E_{m+i,j})+ r_{i}r_{m+j}(E_{ij}-E_{m+j,m+i}) -r_{i}r_{j}(E_{i,m+j}+E_{j,m+i}))v\otimes t^{\textbf{r}+\textbf{s}}$.
        \item $t^{\textbf{r}}\cdot v\otimes t^{\textbf{s}}=v\otimes t^{\textbf{r}+\textbf{s}}$.
        \end{enumerate}
\end{theorem}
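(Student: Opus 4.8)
The plan is to follow the tensor-module strategy that Billig and Talboom employ for $\mathcal{W}_N$ and $\mathcal{S}_N$, adapting it to the symplectic geometry encoded in the Hamiltonian bracket. First I would exploit the compatible $A_N$-action: since $W$ belongs to the category $\mathcal{J}$ and each $t^{\textbf{r}}$ raises the weight by $\textbf{r}$, the operator $t^{\textbf{r}}$ maps $W_\lambda$ into $W_{\lambda+\textbf{r}}$, and the invertibility of $t^{\textbf{r}}$ (with inverse $t^{-\textbf{r}}$, as $t^{\textbf{0}}$ acts as the identity) makes each such map an isomorphism. Fixing a weight $\alpha$ and putting $V := W_\alpha$, which is finite-dimensional by the Harish-Chandra hypothesis, one obtains $W \cong V \otimes A_N$ as $A_N$-modules with $t^{\textbf{r}}\cdot(v\otimes t^{\textbf{s}}) = v\otimes t^{\textbf{r}+\textbf{s}}$; this is relation (3), and relation (1) follows at once since $D(u,\textbf{0})$ must act by the weight scalar $(u,\alpha+\textbf{k})$ on $V\otimes t^{\textbf{k}}$.

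Next I would determine the action of $h(\textbf{r})$. As $h(\textbf{r})$ has degree $\textbf{r}$, write $h(\textbf{r})\cdot(v\otimes t^{\textbf{s}}) = (\tau_{\textbf{r}}(\textbf{s})v)\otimes t^{\textbf{r}+\textbf{s}}$ for operators $\tau_{\textbf{r}}(\textbf{s})\in \operatorname{End}(V)$. Feeding $v\otimes t^{\textbf{s}}$ into the compatibility relation $[h(\textbf{r}),t^{\textbf{k}}] = (\overline{\textbf{r}},\textbf{k})\,t^{\textbf{r}+\textbf{k}}$ yields $\tau_{\textbf{r}}(\textbf{s}+\textbf{k}) - \tau_{\textbf{r}}(\textbf{s}) = (\overline{\textbf{r}},\textbf{k})\operatorname{Id}$, so $\tau_{\textbf{r}}$ is affine in its argument: $\tau_{\textbf{r}}(\textbf{s}) = (\overline{\textbf{r}},\textbf{s})\operatorname{Id} + \sigma(\textbf{r})$ with $\sigma(\textbf{r}) := \tau_{\textbf{r}}(\textbf{0})\in \operatorname{End}(V)$. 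This already produces the leading $(\overline{\textbf{r}},\textbf{s})$ coefficient in (2) and reduces the whole problem to computing the purely $\textbf{r}$-dependent operator $\sigma(\textbf{r})$, which vanishes at $\textbf{r}=\textbf{0}$ because $h(\textbf{0})=0$.

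To pin down $\sigma$, I would use the Hamiltonian bracket, which a short computation renders as $[h(\textbf{r}),h(\textbf{s})] = \omega(\textbf{r},\textbf{s})\,h(\textbf{r}+\textbf{s})$, where $\omega(\textbf{r},\textbf{s}) = (\overline{\textbf{r}},\textbf{s})$ is the standard symplectic form on $\mathbb{Z}^{N}$ (with $N=2m$). Evaluating this identity on $V\otimes t^{\textbf{k}}$ and matching coefficients converts it into a functional equation for $\sigma$; combined with the finite-dimensionality of $\operatorname{End}(V)$, this forces $\sigma(\textbf{r})$ to be polynomial in $\textbf{r}$ of degree at most two. Expanding $\sigma(\textbf{r})$ into its linear and quadratic parts and imposing the equation, the linear part is pinned to $(\textbf{r},\beta)\operatorname{Id}$ for some $\beta\in\mathbb{C}^N$, while the quadratic coefficients assemble into the matrices $E_{m+i,i}$, $E_{ii}-E_{m+i,m+i}$, $E_{i,m+i}$ and their mixed analogues appearing in (2). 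The decisive feature is that $\omega$ forces precisely the defining relations of $\mathfrak{sp}_N$ (rather than those of $\mathfrak{gl}$ or $\mathfrak{sl}$) on these coefficients, endowing $V$ with an $\mathfrak{sp}_N$-module structure and reproducing the quadratic terms of (2) verbatim.

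Finally, the finite-dimensionality of $V$ comes from the finite weight multiplicities, and the irreducibility of $V$ over $\mathfrak{sp}_N$ is equivalent to that of $W$ over $\tilde{\mathcal{H}}_N\ltimes A_N$; a direct verification that the displayed formulas genuinely define a module then closes the converse direction. I expect the main obstacle to lie in the third step: establishing the degree-two bound on $\sigma(\textbf{r})$ and then disentangling the matched coefficients into a bona fide $\mathfrak{sp}_N$-action. The bookkeeping that isolates the several $E$-block contributions, checks their symplectic commutation relations, and confirms that no higher-degree terms survive is where the substantive work sits.
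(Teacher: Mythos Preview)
The paper does not prove this theorem at all: it is stated as a citation of Talboom's result \textbf{[30, Theorem 5.2]} and is invoked only as background for the Hamiltonian case, exactly as Theorem 3.1 cites Billig--Talboom for $\mathcal{S}_N$. There is therefore no ``paper's own proof'' to compare against.

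That said, your sketch is a faithful outline of the argument Talboom actually gives in \textbf{[30]}. The reduction to $W\cong V\otimes A_N$ via the associative $A_N$-action, the extraction of $\tau_{\textbf{r}}(\textbf{s})=(\overline{\textbf{r}},\textbf{s})\operatorname{Id}+\sigma(\textbf{r})$ from the bracket $[h(\textbf{r}),t^{\textbf{k}}]$, and the identification of the quadratic part of $\sigma$ with an $\mathfrak{sp}_N$-action via the functional equation coming from $[h(\textbf{r}),h(\textbf{s})]=(\overline{\textbf{r}},\textbf{s})h(\textbf{r}+\textbf{s})$ are exactly the steps in Talboom's proof. Your own diagnosis of where the real work lies is accurate: bounding the degree of $\sigma(\textbf{r})$ by two is the nontrivial point, and in \textbf{[30]} this is handled not by a bare finite-dimensionality argument but by passing to the associated graded of a suitable filtration (or equivalently by a careful analysis of the cocycle equation at special values), after which the coefficient matching with $\mathfrak{sp}_N$ generators is routine. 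If you intend to write out the proof yourself, that is the step you would need to make precise.
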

Let $V$ be an irreducible uniformly bounded Harish-Chandra module for $(\tilde{\mathcal{H}}\ltimes A_{N})\otimes B$ with finite-dimensional weight spaces with the assumption that $t^{\textbf{0}}$ and $t^{\textbf{r}}$ for some $\textbf{r}\neq \textbf{0}$ act nontrivially on $V
$. Since $t^{\textbf{0}}\otimes b$ is a central element of $(\tilde{\mathcal{H}}_{N}\ltimes A_{N})\otimes B $ for every $b\in B$ and $V$ is a uniformly bounded Harish-Chandra module, there exists a linear map $\phi: B \to \mathbb{C}$ such that $t^{\textbf{0}}\otimes b =\phi(b)t^{0}$ on $V$, and hence $t^{\textbf{0}}\otimes b=c\phi(b)$ on $V$, where $c$ is the nonzero scalar by which $t^{\textbf{0}}$ acts on $V$. For simplicity, we denote by $t^{\textbf{r}}b$ the elements $t^{\textbf{r}}\otimes b$, for every $\textbf{r}\in \mathbb{Z}^{N}$ and $ b \in B$.
\subsection{Quasi associative action of $A_{N}\otimes B$ }
We state the following lemmas, whose proofs are analogous to those for 
the case of $(\mathcal{S}_{N}\ltimes A_{N})\otimes B$. 
\begin{lemma}
    Let $V=\bigoplus_{\textbf{n}\in \mathbb{Z}^{N}}V_{\textbf{n}}$ be an irreducible uniformly bounded $\mathbb{Z}^{N}$-graded module over the associative algebra $A_{N}$, then $\text{dim}V_{\textbf{n}}\leq 1$, for all $\textbf{n}\in \mathbb{Z}^{N}$.
\end{lemma}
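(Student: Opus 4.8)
The plan is to exploit the fact that $A_{N}=\mathbb{C}[t_{1}^{\pm1},\ldots,t_{N}^{\pm1}]$ is the group algebra $\mathbb{C}[\mathbb{Z}^{N}]$, so that every Laurent monomial $t^{\textbf{m}}$ is a unit. First I would record that $1=t^{\textbf{0}}$ acts as the identity: since $\rho(1)$ (the operator by which $1$ acts) is an idempotent homogeneous endomorphism of degree $\textbf{0}$, its kernel and image are graded submodules, and graded-irreducibility together with $V\neq 0$ forces $\rho(1)=\mathrm{id}$. The only alternative, $\rho(1)=0$, would make all of $A_{N}$ act trivially, in which case every graded subspace is a submodule and $\dim V_{\textbf{n}}\leq 1$ is immediate.

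Granting $\rho(1)=\mathrm{id}$, the relation $t^{\textbf{m}}t^{-\textbf{m}}=1$ shows that $\rho(t^{\textbf{m}})$ is invertible with inverse $\rho(t^{-\textbf{m}})$. As $\rho(t^{\textbf{m}})$ has degree $\textbf{m}$, it restricts to a linear isomorphism $V_{\textbf{n}}\xrightarrow{\sim}V_{\textbf{m}+\textbf{n}}$ for all $\textbf{m},\textbf{n}\in\mathbb{Z}^{N}$; in particular every homogeneous component has the same dimension $d:=\dim V_{\textbf{0}}$, which is finite by uniform boundedness.

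Next I would reduce everything to the degree-zero component. Any graded submodule $U=\bigoplus_{\textbf{n}}U_{\textbf{n}}$ is stable under the invertible operators $\rho(t^{\textbf{m}})$ and their inverses, so $\rho(t^{\textbf{m}})U_{\textbf{0}}\subseteq U_{\textbf{m}}$ and $\rho(t^{-\textbf{m}})U_{\textbf{m}}\subseteq U_{\textbf{0}}$, whence $U_{\textbf{m}}=\rho(t^{\textbf{m}})U_{\textbf{0}}$ for every $\textbf{m}$. Thus $U$ is completely determined by the subspace $U_{\textbf{0}}\subseteq V_{\textbf{0}}$, and conversely every subspace of $V_{\textbf{0}}$ extends to a graded submodule in this way. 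Hence the graded submodules of $V$ correspond bijectively to the subspaces of $V_{\textbf{0}}$, and the irreducibility of $V$ is equivalent to $V_{\textbf{0}}$ having no nonzero proper subspace, i.e.\ $d\leq 1$. Since each $V_{\textbf{n}}\cong V_{\textbf{0}}$, we conclude $\dim V_{\textbf{n}}\leq 1$ for all $\textbf{n}\in\mathbb{Z}^{N}$.

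The argument is almost entirely formal; the only point demanding care is the interpretation of ``irreducible'' in the graded sense (no proper nonzero graded submodule), which is precisely what, via the correspondence with subspaces of $V_{\textbf{0}}$, forces one-dimensionality. I expect the main (mild) obstacle to be pinning down the unital normalization $\rho(1)=\mathrm{id}$ cleanly, after which the invertibility of the monomials does all the work. I would also note in passing that uniform boundedness is not logically needed for the bound $d\leq 1$ itself; it merely guarantees the finiteness of $d$, which is what keeps us inside the Harish--Chandra framework used in the sequel.
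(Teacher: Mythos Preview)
Your argument is correct and is essentially the standard one: the paper does not spell out a proof but simply refers to [20], where the same idea---that the monomials $t^{\textbf{m}}$ are units, so all graded pieces are isomorphic and graded submodules are parametrized by subspaces of $V_{\textbf{0}}$---is used. Your observation that uniform boundedness is not actually needed for the inequality $\dim V_{\textbf{n}}\le 1$ is also right; the hypothesis is there only to fit the ambient Harish--Chandra setting.
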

\begin{lemma}  Suppose that $V$ is an irreducible module for $(\tilde{\mathcal{H}}_{N}\ltimes A_{N})\otimes  B$ and $x\in \mathcal{U}(A_{N}\otimes B)$. If $x\cdot v=0$ for some nonzero $v\in V$, then x is locally nilpotent on V.
\end{lemma}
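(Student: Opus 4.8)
The plan is to mirror the proof of Lemma 3.3, since the argument there exploits only one structural feature of the ambient algebra, namely that $A_{N}\otimes B$ is an \emph{abelian ideal}, and this feature is shared by $(\tilde{\mathcal{H}}_{N}\ltimes A_{N})\otimes B$. Concretely, since $[\tilde{\mathcal{H}}_{N},A_{N}]\subseteq A_{N}$ and $A_{N}$ is commutative, $A_{N}$ is an abelian ideal of $\tilde{\mathcal{H}}_{N}\ltimes A_{N}$; tensoring with the commutative algebra $B$ preserves both properties, so $[(\tilde{\mathcal{H}}_{N}\ltimes A_{N})\otimes B,\,A_{N}\otimes B]\subseteq A_{N}\otimes B$ while $[A_{N}\otimes B,\,A_{N}\otimes B]=0$. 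In particular $\mathcal{U}(A_{N}\otimes B)$ is a commutative subalgebra of $\mathcal{U}((\tilde{\mathcal{H}}_{N}\ltimes A_{N})\otimes B)$.

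First I would extract the key operator identity. For any $z\in(\tilde{\mathcal{H}}_{N}\ltimes A_{N})\otimes B$ and any monomial $x=a_{1}\cdots a_{p}$ with $a_{i}\in A_{N}\otimes B$, the bracket $[x,z]=\sum_{i}a_{1}\cdots[a_{i},z]\cdots a_{p}$ lies in $\mathcal{U}(A_{N}\otimes B)$, because each $[a_{i},z]\in A_{N}\otimes B$ by the ideal property. As $\mathcal{U}(A_{N}\otimes B)$ is commutative, $[x,z]$ commutes with $x$, whence $\mathrm{ad}(x)^{2}z=[x,[x,z]]=0$. This yields, as operators on $V$,
\begin{equation*}
x^{n}z = z\,x^{n} + n\,[x,z]\,x^{n-1},\qquad n\ge 1.
\end{equation*}

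Next I would run the standard submodule argument. Put $V'=\{w\in V\mid x^{n}w=0\text{ for some }n\ge 1\}$; it is nonzero since $v\in V'$ by hypothesis. Given $w\in V'$ with $x^{n}w=0$ and any $z\in(\tilde{\mathcal{H}}_{N}\ltimes A_{N})\otimes B$, the identity above gives
\begin{equation*}
x^{n+1}(z\cdot w)=z\,(x^{n+1}w)+(n+1)[x,z]\,(x^{n}w)=0,
\end{equation*}
so $z\cdot w\in V'$. Hence $V'$ is a nonzero submodule, and the irreducibility of $V$ forces $V'=V$; that is, $x$ is locally nilpotent on $V$.

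The only point requiring care — and the step I expect to be the genuine content rather than bookkeeping — is the verification that $\mathrm{ad}(x)^{2}z=0$ for a general element $x$ of the enveloping algebra $\mathcal{U}(A_{N}\otimes B)$, and not merely for a single generator; this rests precisely on $A_{N}\otimes B$ being an abelian ideal, so that $[x,z]$ never leaves the commutative subalgebra $\mathcal{U}(A_{N}\otimes B)$. Since $A_{N}\otimes B$ sits inside $(\tilde{\mathcal{H}}_{N}\ltimes A_{N})\otimes B$ in exactly the same way it sits inside $(\mathcal{S}_{N}\ltimes A_{N})\otimes B$ — replacing the generators $d_{ab}(\mathbf{r})$ by the $h_{\mathbf{r}}$ does not alter the semidirect-product structure — the proof of Lemma 3.3 transfers verbatim.
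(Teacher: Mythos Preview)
Your proof is correct and follows essentially the same approach as the paper. The paper's proof of Lemma 3.3 (to which Lemma 4.3 is referred as analogous) rests on the identity $[[(\mathcal{S}_{N}\ltimes A_{N})\otimes B,\,A_{N}\otimes B],\,A_{N}\otimes B]=0$, i.e.\ exactly the abelian-ideal property you isolate, and then argues by induction that $x^{k+1}$ annihilates any product $D(u_{1},\mathbf{r}_{1})b_{1}\cdots D(u_{k},\mathbf{r}_{k})b_{k}\cdot v$; your formulation via the submodule $V'=\{w:x^{n}w=0\text{ for some }n\}$ and the operator identity $x^{n}z=zx^{n}+n[x,z]x^{n-1}$ is a clean repackaging of the same argument.
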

\begin{proposition}
    Let $V$ be an irreducible uniformly bounded Harish-Chandra module for $(\tilde{\mathcal{H}}_{N}\ltimes A_{N})\otimes B$. Then for $b\in B$ all $ t^{\textbf{m}}b,\;  \textbf{m}\neq 0$ act injectively on $V$ or all $t^{\textbf{m}}b,\: \textbf{m}\neq 0$ act nilpotently on V.
\end{proposition}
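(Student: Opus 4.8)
The plan is to follow the architecture of the proof of Proposition 3.4 verbatim, replacing the $\mathcal{S}_N$-operators $d_{ij}(\textbf{r})$ by the Hamiltonian operators $h_{\textbf{r}}=D(\overline{\textbf{r}},\textbf{r})$ and the minor $\det\begin{pmatrix}\textbf{n}\\\textbf{m}\end{pmatrix}_{i,j}$ by the symplectic pairing $\langle\textbf{r},\textbf{s}\rangle:=(\overline{\textbf{r}}|\textbf{s})=\sum_{i=1}^{m}(r_{m+i}s_i-r_is_{m+i})$. The two identities that drive every computation are $[h_{\textbf{r}},h_{\textbf{s}}]=\langle\textbf{r},\textbf{s}\rangle\,h_{\textbf{r}+\textbf{s}}$ and $h_{\textbf{r}}\cdot t^{\textbf{s}}=\langle\textbf{r},\textbf{s}\rangle\,t^{\textbf{r}+\textbf{s}}$, both of which follow from the Witt bracket together with $\overline{\textbf{r}+\textbf{s}}=\overline{\textbf{r}}+\overline{\textbf{s}}$ and the antisymmetry $(\overline{\textbf{r}}|\textbf{s})=-(\overline{\textbf{s}}|\textbf{r})$. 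As in Proposition 3.4 I would fix $b$, assume that some $t^{\textbf{m}}b$ acts injectively, and prove that every $t^{\textbf{n}}b$ with $\textbf{n}\neq\textbf{0}$ acts injectively; the dichotomy then follows, since by Lemma 4.3 a non-injective $t^{\textbf{n}}b$ is locally nilpotent, so if no $t^{\textbf{m}}b$ is injective then all of them are locally nilpotent, which is the second alternative.

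First I would treat the \emph{nondegenerate} directions, those $\textbf{n}$ with $\langle\textbf{m},\textbf{n}\rangle\neq 0$. Here the reasoning transcribes Claims 1 and 3 of Proposition 3.4: applying $h_{\textbf{m}+\textbf{n}}$ to $(t^{-\textbf{n}}b)^{N'}(t^{\textbf{n}}b)^{N'}\cdot V=0$ brings down the factor $\langle\textbf{m},\textbf{n}\rangle\neq 0$, because $\langle\textbf{m}+\textbf{n},\pm\textbf{n}\rangle=\pm\langle\textbf{m},\textbf{n}\rangle$, and descending induction combined with injectivity of $t^{\textbf{m}}b$ upgrades local nilpotency to genuine nilpotency. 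The bijectivity estimate, namely the analogue of $(3.4.2)$ with $h_{\textbf{m}-\textbf{n}}$ in place of $d_{ij}(\textbf{m}-\textbf{n})$ and $\langle\textbf{m},\textbf{n}\rangle$ in place of the determinant (using $\langle\textbf{m}-\textbf{n},\textbf{n}\rangle=\langle\textbf{m},\textbf{n}\rangle$), then shows that a nilpotent $t^{\textbf{n}}b$ would force $t^{\textbf{n}}b\colon V_{\Lambda-\textbf{n}}\to V_{\Lambda}$ to be surjective, hence bijective by the uniform bound, contradicting nilpotency. Thus every nondegenerate $t^{\textbf{n}}b$ is injective.

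The genuinely new point, and the step I expect to be the main obstacle, is the \emph{degenerate} locus $\langle\textbf{m},\textbf{n}\rangle=0$. In the Special case, vanishing of all minors $\det\begin{pmatrix}\textbf{n}\\\textbf{m}\end{pmatrix}_{i,j}$ forces $\textbf{n}\in\mathbb{Q}\textbf{m}$, a one-dimensional set handled by Claim 2; but here $\langle\textbf{m},\cdot\rangle=0$ cuts out the entire hyperplane $\textbf{m}^{\perp_\omega}$ of dimension $N-1$, so no direct minor argument is available. My plan is to bootstrap through an auxiliary direction: given $\textbf{n}$ with $\langle\textbf{m},\textbf{n}\rangle=0$, choose $\textbf{k}\in\mathbb{Z}^N$ with $\langle\textbf{m},\textbf{k}\rangle\neq 0$ and $\langle\textbf{n},\textbf{k}\rangle\neq 0$. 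Such a $\textbf{k}$ exists because nondegeneracy of the symplectic form makes $\langle\textbf{m},\cdot\rangle$ and $\langle\textbf{n},\cdot\rangle$ nonzero functionals, and $\mathbb{Z}^N$ is not covered by the union of the two proper hyperplanes they define. By the previous paragraph $t^{\textbf{k}}b$ is injective, and since $\langle\textbf{k},\textbf{n}\rangle\neq 0$ the vector $\textbf{n}$ is nondegenerate relative to $\textbf{k}$; rerunning the nondegenerate argument with $\textbf{k}$ as the reference injective direction yields that $t^{\textbf{n}}b$ is injective.

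Assembling the two cases shows that injectivity of a single $t^{\textbf{m}}b$ propagates to all $t^{\textbf{n}}b$ with $\textbf{n}\neq\textbf{0}$, which is the asserted dichotomy. I would finally remark that the bootstrap is uniform in $N=2m$: when $N=2$ the hyperplane $\textbf{m}^{\perp_\omega}$ is exactly the line $\mathbb{Q}\textbf{m}$, and the construction of $\textbf{k}$ collapses to choosing any $\textbf{k}$ with $\langle\textbf{m},\textbf{k}\rangle\neq 0$, recovering the Special-type picture, whereas for $N\geq 4$ the auxiliary-direction step is essential and has no counterpart in Proposition 3.4.
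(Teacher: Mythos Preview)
Your proposal is correct and follows the same overall architecture as the paper: first handle the nondegenerate directions $\langle\textbf{m},\textbf{n}\rangle\neq 0$ by transcribing Claims~1 and~3 of Proposition~3.4 with $h_{\textbf{m}\pm\textbf{n}}$ in place of $d_{ij}(\textbf{m}\pm\textbf{n})$, then bootstrap to the degenerate locus. The difference lies in how the degenerate case $\langle\textbf{m},\textbf{n}\rangle=0$ is dispatched. You argue abstractly: pick any $\textbf{k}$ avoiding the two symplectic hyperplanes $\textbf{m}^{\perp_\omega}$ and $\textbf{n}^{\perp_\omega}$, get $t^{\textbf{k}}b$ injective from the nondegenerate step, then rerun with $\textbf{k}$ as reference. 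The paper instead writes down an explicit chain using the identity $(\overline{\textbf{m}}\,|\,\overline{\textbf{m}})=|\textbf{m}|^2\neq 0$: from $t^{\textbf{m}}b$ injective one gets $t^{\textbf{n}+\overline{\textbf{m}}}b$ injective (since $(\overline{\textbf{m}}\,|\,\textbf{n}+\overline{\textbf{m}})=|\textbf{m}|^2$), then $t^{\overline{\textbf{n}}-\textbf{m}}b$ injective (via the general fact $t^{\textbf{p}}b$ injective $\Leftrightarrow t^{\overline{\textbf{p}}}b$ injective), and finally $t^{\textbf{n}}b$ injective because $(\overline{\overline{\textbf{n}}-\textbf{m}}\,|\,\textbf{n})=-|\textbf{n}|^2\neq 0$. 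Your single-step hyperplane-avoidance is cleaner and makes the uniformity in $N$ transparent; the paper's route is more concrete and avoids any appeal to ``$\mathbb{Z}^N$ not covered by two hyperplanes,'' at the cost of a slightly longer chain. Both correctly bypass the Claim~2 (rational-multiple) step of Proposition~3.4, which is unnecessary here since the bootstrap already covers the line $\mathbb{Q}\textbf{m}$.
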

\begin{proof}
    Suppose that $t^{\textbf{m}}b$ acts injectively on $V$ for some $\textbf{m}\neq 0$. Let $\textbf{n}\in \mathbb{Z}^{N}$ be such that $(\overline{\textbf{m}}| \textbf{n})\neq 0$ and if $t^{\textbf{n}}b $ acts locally nilpotently on $V$, then we see that it is also nilpotent on $V$. As $t^{\textbf{n}}b$ is locally nilpotent on $V$, it follows that $t^{-\textbf{n}}bt^{\textbf{n}}b$ is also locally nilpotent on $V$. As $V$ is a uniformly bounded weight module, we can find a positive integer $N'$ such that $(t^{-\textbf{n}}bt^{\textbf{n}}b)^{N'}\cdot V=0$, i.e. $(t^{-\textbf{n}}b)^{N'}(t^{\textbf{n}}b)^{N'}\cdot V=0$. As $(\overline{\textbf{m}}|\textbf{n})\neq 0$, we see that for any $k\in \mathbb{Z}_{+}$, we have \begin{align*}
        h_{\textbf{m}+\textbf{n}}\cdot (t^{-\textbf{n}}b)^{k}\cdot (t^{\textbf{n}}b)^{k}\cdot V &= (t^{-\textbf{n}}b)^{k}\cdot (t^{\textbf{n}}b)^{k}\cdot h_{\textbf{m}+\textbf{n}}\cdot V + (\overline{\textbf{m}}|\textbf{n})kt^{\textbf{m}+2\textbf{n}}b\cdot (t^{-\textbf{n}}b)^{k}\cdot (t^{\textbf{n}}b)^{k-1}\cdot V \\
        &\quad - (\overline{\textbf{m}}|\textbf{n})kt^{\textbf{m}}b\cdot (t^{\textbf{n}}b)^{k}\cdot (t^{-\textbf{n}}b)^{k-1}\cdot V.
    \end{align*}
    In particular where $k=N'$, we obtain: \begin{equation*}
        N't^{\textbf{m}+2\textbf{n}}b\cdot (t^{-\textbf{n}}b)^{N'}\cdot (t^{\textbf{n}}b)^{N'-1}\cdot V- N't^{\textbf{m}}b\cdot (t^{\textbf{n}}b)^{N'}\cdot(t^{-\textbf{n}}b)^{N'-1}\cdot V=0
        \end{equation*}
Applying $t^{\textbf{n}}b$ on both sides of the equation above, we get: $t^{\textbf{m}}b\cdot (t^{\textbf{n}}b)^{N'+1}\cdot (t^{-\textbf{n}}b)^{N'-1}\cdot V=0$ and since $t^{\textbf{m}}b$ acts injectively on $V$, we get that $(t^{\textbf{n}}b)^{N'+1}(t^{-\textbf{n}}b)^{N'-1}\cdot V=0$. By reverse induction, it follows that $(t^{\textbf{n}}b)^{N_{0}}\cdot V =0$, for some $N_{0}\in \mathbb{Z}_{+}$. This shows that $t^{\textbf{n}}b$ is locally nilpotent on $V$. \par
Our next step is to show that $t^{\textbf{n}}b$ is injective for all $\textbf{n}\in \mathbb{Z}^{N}\setminus\{\textbf{0}\}$.
\\ 
\textbf{Case 1:} We first prove this result for $\textbf{n}\in \mathbb{Z}^{N}\setminus\{\textbf{0}\}$ with $(\overline{\textbf{m}}|\textbf{n})\neq 0$. To derive a contradiction, assume that $\textbf{n}\in \mathbb{Z}^{N}\setminus\{\textbf{0}\}$ with $ (\overline{\textbf{m}},\textbf{n})\neq 0 $ be such that $t^{\textbf{n}}b$ is locally nilpotent on $V$, then by preceding argument, there exists $k \in \mathbb{Z}_{+}$ such that $(t^{\textbf{n}}b)^{k}\cdot V=0$. As $V$ is uniformly bounded, we can assume that $\text{dim}(V_{\Lambda})\geq \text{dim}(V_{\Lambda +\textbf{r}})$, for every $\textbf{r}\in \mathbb{Z}^{N}$. By the injectivity of $t^{\textbf{m}}b$, we have $t^{\textbf{m}}b\cdot V_{\Lambda + i \textbf{m}}= V_{\Lambda + (i+1)\textbf{m}}$ and $\text{dim}V_{\Lambda + i\textbf{m}}= \text{dim}V_{\Lambda }$, for every $i \in \mathbb{Z}$. For $v\in V_{\Lambda-k\textbf{m}}$, we  have 
\begin{equation*}
    0= h_{\textbf{m}-\textbf{n}}^{k}\cdot (t^{\textbf{n}}b)^{k} \cdot v= k!(\overline{\textbf{m}}|\textbf{n})^{k}(t^{\textbf{m}}b)^{k}\cdot v + t^{\textbf{n}}b\cdot u,
\end{equation*}
for some $u \in V_{\Lambda -\textbf{n}}$, therefore $t^{\textbf{n}}b : V_{\Lambda-\textbf{n}}\to V_{\Lambda}$ is bijective, and by the same arguments as used in Claim 3 of Proposition 3.4, it follows $t^{\textbf{n}}b:V_{\Lambda -(i+1)\textbf{n}}\to V_{\Lambda - i\textbf{n}} $ is bijective for every $i\in \mathbb{N}$, and  therefore it cannot be nilpotent on $V$, a contradiction.  \\ 
 \textbf{Case 2:} It remains to be shown that $t^{\textbf{n}}b$ is injective in $V$ for $\textbf{n}\in \mathbb{Z}^{N}\setminus\{\textbf{0}\}$ with $(\overline{\textbf{m}}|\textbf{n})=0$. For such a $\textbf{n}$, we have $(\overline{\textbf{m}}| \textbf{n}+\overline{\textbf{m}})=(\overline{\textbf{m}}|\overline{\textbf{m}})\neq 0$, and by Case 1, $t^{\textbf{n}+\overline{\textbf{m}}}b$ is injective. This also gives that $t^{\textbf{m}}b$ is injective if and only if $t^{\overline{\textbf{m}}}b$ is injective. Therefore, $t^{\textbf{n}+\overline{\textbf{m}}}b$ is injective implies that $t^{\overline{\textbf{n}}-\textbf{m}}b$ is injective. Note that $(\overline{\overline{\textbf{n}}-\textbf{m}},\textbf{n})= -(\textbf{n},\textbf{n})\neq 0$, so that $t^{\textbf{n}}b$ is injective on $V$.
\end{proof}
\begin{lemma}
    If the element $t^{\textbf{r}} b$ acts nilpotently on $V$ for some $\textbf{r}\neq 0$, then $t^{\textbf{s}}bB\cdot V=0$ for every $\textbf{s}\in \mathbb{Z}^{N}\setminus\{\textbf{0}\}$ and hence $A'\otimes bB\cdot V=0$, where $bB$ is the principal ideal $<b>$ of $B$ generated by $b$. 
\end{lemma}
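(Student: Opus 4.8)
The plan is to run the argument of Theorem~3.5 verbatim in the Hamiltonian setting, replacing the divergence-zero operators $d_{ij}(\textbf{m})$ by the Hamiltonian operators $h_{\textbf{m}}=D(\overline{\textbf{m}},\textbf{m})$ and the linear-independence (determinant) condition by the symplectic non-orthogonality condition $(\overline{\textbf{m}}|\textbf{n})\neq 0$. First I would reduce everything to showing that $A'\otimes bB$ acts nilpotently on $V$. Since $t^{\textbf{r}}b$ is nilpotent it is not injective, so Proposition~4.4 forces every $t^{\textbf{n}}b$ ($\textbf{n}\neq\textbf{0}$) to act nilpotently on $V$. Because $t^{-\textbf{n}}b$ and $t^{\textbf{n}}b$ commute ($A_{N}$ being an abelian ideal) and $V$ is uniformly bounded, the degree-zero operator $t^{-\textbf{n}}b\cdot t^{\textbf{n}}b$ is nilpotent on each finite-dimensional weight space with a uniform exponent; hence there is $N'\in\mathbb{Z}_{+}$ with $(t^{-\textbf{n}}b)^{N'}(t^{\textbf{n}}b)^{N'}\cdot V=0$ for all $\textbf{n}\neq\textbf{0}$.

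Next I would establish the base identity. Fix $\textbf{n}\neq\textbf{0}$, pick $\textbf{m}\neq\textbf{0}$ with $(\overline{\textbf{m}}|\textbf{n})\neq 0$, and apply $h_{\textbf{m}}b_{1}$ to the relation above. Since $[[(\tilde{\mathcal{H}}_{N}\ltimes A_{N})\otimes B,\,A_{N}\otimes B],\,A_{N}\otimes B]=0$, the Leibniz expansion collapses to first order and the only brackets entering are $[h_{\textbf{m}},t^{\pm\textbf{n}}]=\pm(\overline{\textbf{m}}|\textbf{n})\,t^{\textbf{m}\pm\textbf{n}}$, giving a two-term identity exactly parallel to~(3.5.8). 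Applying $t^{\textbf{n}}b$ once more annihilates the summand carrying $(t^{\textbf{n}}b)^{N'}(t^{-\textbf{n}}b)^{N'}$ and, as $(\overline{\textbf{m}}|\textbf{n})\neq 0$, leaves
\[
 t^{\textbf{m}-\textbf{n}}bb_{1}\,(t^{-\textbf{n}}b)^{N'-1}(t^{\textbf{n}}b)^{N'+1}\cdot V=0 .
\]
Iterating—applying $h_{\textbf{m}_{k+1}}b_{k+1}$ and then $t^{\textbf{n}}b$ at each stage, with the spurious terms in which $h_{\textbf{m}_{k+1}}$ strikes a factor $t^{\textbf{m}_{i}-\textbf{n}}$ absorbed by the inductive hypothesis—produces, for successive $j$, relations
\[
 t^{\textbf{m}_{1}-\textbf{n}}bb_{1}\cdots t^{\textbf{m}_{j}-\textbf{n}}bb_{j}\,(t^{-\textbf{n}}b)^{N'-j}(t^{\textbf{n}}b)^{N'+j}\cdot V=0 ,
\]
valid whenever the combinations $\sum_{i}\epsilon_{i}\textbf{m}_{i}$ ($\epsilon_{i}\in\{0,1\}$) remain symplectically non-orthogonal to $\textbf{n}$. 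Carrying the induction far enough clears all surviving powers of $t^{\pm\textbf{n}}b$ and yields $t^{\textbf{m}_{1}-\textbf{n}}bb_{1}\cdots t^{\textbf{m}_{3N}-\textbf{n}}bb_{3N}\cdot V=0$ under the same restriction.

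To conclude, given arbitrary $\textbf{n}_{1},\dots,\textbf{n}_{3N}\in\mathbb{Z}^{N}\setminus\{\textbf{0}\}$ I would choose the auxiliary vector $\textbf{n}$ so that $(\overline{\sum_{i\in S}\textbf{n}_{i}}\,|\,\textbf{n})\neq0$ for every nonempty $S$ with $\sum_{i\in S}\textbf{n}_{i}\neq\textbf{0}$ (possible because each constraint excludes a hyperplane and $\mathbb{Z}^{N}$ is not covered by finitely many hyperplanes), and set $\textbf{m}_{i}=\textbf{n}+\textbf{n}_{i}$. Using $(\overline{\textbf{n}}|\textbf{n})=0$, the genericity hypotheses reduce precisely to these constraints, so the last relation becomes $t^{\textbf{n}_{1}}bb_{1}\cdots t^{\textbf{n}_{3N}}bb_{3N}\cdot V=0$; thus $(A'\otimes bB)^{3N}\cdot V=0$ and $V'=\{v\in V:(A'\otimes bB)\cdot v=0\}$ is non-zero. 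Finally $V'$ is a submodule: for $y\in A'\otimes bB$ and $x$ homogeneous, $[y,x]$ again lies in $A'\otimes bB$ unless it has degree zero, which forces $x$ to have degree $-\deg y$, and there the structure constant equals $(\overline{\textbf{s}}|\textbf{s})=0$, so $[y,x]\cdot v=0$. Irreducibility of $V$ then gives $V'=V$.

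The main obstacle is the passage from the determinant condition to the symplectic pairing $(\overline{\textbf{m}}|\textbf{n})$: unlike linear independence, this pairing can vanish on linearly independent vectors, so all the genericity bookkeeping must be carried out with respect to the symplectic form rather than with respect to parallelism. The degenerate instances $(\overline{\textbf{m}}|\textbf{n})=0$ cannot be attacked head-on and are circumvented exactly as in Proposition~4.4, via the substitution $\textbf{n}\mapsto\textbf{n}+\overline{\textbf{m}}$ (noting $(\overline{\textbf{m}}|\overline{\textbf{m}})\neq0$) together with the equivalence of injectivity/nilpotency of $t^{\textbf{m}}b$ and $t^{\overline{\textbf{m}}}b$; this reduction is the one genuinely new ingredient beyond the Special-algebra argument.
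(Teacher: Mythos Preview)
Your approach is essentially the same as the paper's: replace the operators $d_{ij}(\textbf{m})$ of Theorem~3.5 by $h_{\textbf{m}}$, replace the determinant condition by the symplectic pairing $(\overline{\textbf{m}}\,|\,\textbf{n})\neq 0$, run the same two-stage induction to reach $(A'\otimes bB)^{3N}\cdot V=0$, and conclude by irreducibility. The paper's proof does exactly this, keeping alongside the symplectic condition the residual hypothesis $\sum\epsilon_i\textbf{m}_i\notin\mathbb{Q}\textbf{n}$ inherited from (3.5.10)--(3.5.12); your slightly different bookkeeping of the genericity constraints is equivalent.

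One remark: your final paragraph overstates matters. The substitution $\textbf{n}\mapsto\textbf{n}+\overline{\textbf{m}}$ and the injectivity equivalence of $t^{\textbf{m}}b$ and $t^{\overline{\textbf{m}}}b$ are genuinely needed in Proposition~4.4, but not here. In the present lemma the auxiliary vector $\textbf{n}$ is \emph{chosen} at the end, so one simply picks it to avoid the finitely many hyperplanes $(\overline{\textbf{n}}\,|\,\textbf{n}_i)=0$ and the finitely many bad values $\sum\epsilon_i\textbf{n}_i$; no further reduction is required. The paper handles the degenerate pairings in exactly this way, by generic choice of $\textbf{n}$.
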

\begin{proof}
    If $t^{\textbf{r}}b$ is locally nilpotent for some $\textbf{r}\neq \textbf{0}$, then $t^{\textbf{n}}b$ is locally nilpotent for each $\textbf{n}\in \mathbb{Z}^{N}\setminus\{\textbf{0}\}$. We can deduce (3.5.10), (3.5.12) analogously to Theorem 3.5, with the operators $d_{i,j}(\textbf{m})'s$ replaced by the operators $h_{\textbf{m}} $ and the additional assumption that $(\overline{\textbf{m}_{i}}, \textbf{n})\neq 0$. Therefore, we have 
    \begin{equation*}
        t^{\textbf{m}_{1}-\textbf{n}}bb_{1}\cdots t^{\textbf{m}_{3N}-\textbf{n}}bb_{3N}\cdot V =0, \forall \textbf{m}_{i} \in \mathbb{Z}^{N} \: \:  \text{with } \: \; 
        \sum_{i=1}^{3N}\epsilon_{i}\textbf{m}_{i} \notin \mathbb{Q}\textbf{n} \; \text{and} \: (\overline{\textbf{m}}_{i},\textbf{n})\neq 0 \; \forall \; i.
    \end{equation*}
    Now for $\textbf{n}_{i}\in \mathbb{Z}^{N}$, with $1\leq i \leq 3N$, let $\textbf{n}\in \mathbb{Z}\setminus \sum_{i=1}^{3N}\epsilon_{i}\textbf{n}_{i}$, $\forall \epsilon_{i}\in \{0,1\}$ with $(\overline{\textbf{n}},\textbf{n}_{i})\neq 0$, for all $1\leq i \leq 3N$. Let $\textbf{m}_{i}=\textbf{n}+\textbf{n}_{i}$, then $(\overline{\textbf{m}}_{i},\textbf{n})\neq 0$, it follows that \begin{equation*}
        t^{\textbf{n}_{1}}bb_{1}\cdots t^{\textbf{n}_{3N}}bb_{3N}\cdot V =0.
    \end{equation*}
    This is true for any $\textbf{n}_{i}\in \mathbb{Z}^{N}\setminus\{\textbf{0}\}$ $(1\leq i \leq 3N)$. Now for the above $b$, $W=\{v\in V \mid t^{\textbf{m}}bB\cdot v=0 \; \forall \textbf{m}\in \mathbb{Z}^{N}\setminus\{\textbf{0}\} \}$ is nonzero $(\mathcal{\tilde{H}}_{N}\ltimes A_{N})\otimes B$-submodule of $V$. By the irreducibility of $V$, we conclude that $W=V$.
\end{proof}
By our assumption, $A_{N}'$ acts nontrivially, therefore $t^{\textbf{r}}$ is injective for all $\textbf{r}\neq 0$. Proceeding similarly to the case of $(\mathcal{S}_{N}\ltimes A_{N})\otimes B$, we see that $\text{dim}V_{\textbf{n}}=\text{dim}V_{\textbf{m}}, \forall \textbf{m},\textbf{n}\in \mathbb{Z^{N}} $. For each $0\neq \textbf{r}\in \mathbb{Z}^{N}$, there are scalars $ \lambda_{\textbf{r},b}$ such that $t^{\textbf{r}}b =\lambda_{\textbf{r},b}t^{\textbf{r}}$ on $V$, so that the mapping $b\to \lambda_{\textbf{r},b}$ defines a linear map from $B \to \mathbb{C}$ for every fixed $\textbf{r}\neq 0$. The following important proposition proves that such a map is in fact independent of $\textbf{r}$.
\begin{proposition}
    There exists an algebra homomorphism $\psi: B \to \mathbb{C}$ such that $t^{\textbf{r}}b =\psi(b)t^{\textbf{r}}$ on $V$, for every $\textbf{r}\neq 0$.
\end{proposition}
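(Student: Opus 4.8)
The plan is to transcribe the argument of Proposition~3.7 into the Hamiltonian setting, replacing Lemma~3.3 and Theorem~3.5 by their analogues Lemma~4.3 and Lemma~4.5. Fix $\textbf{r}\in\mathbb{Z}^{N}\setminus\{\textbf{0}\}$ and $b\in B$. The discussion preceding the statement produces, for each such $\textbf{r}$, a scalar $\lambda_{\textbf{r},b}$ together with a common eigenvector realising it, so that the operator $t^{\textbf{r}}b-\lambda_{\textbf{r},b}t^{\textbf{r}}=t^{\textbf{r}}(b-\lambda_{\textbf{r},b}\cdot 1)$ annihilates a nonzero vector of $V_{\Lambda}$. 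By Lemma~4.3 this operator is then locally nilpotent on all of $V$.

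Next I would feed the element $b-\lambda_{\textbf{r},b}\cdot 1\in B$ into Lemma~4.5. Since $t^{\textbf{r}}(b-\lambda_{\textbf{r},b}\cdot 1)$ is locally nilpotent, Lemma~4.5 gives $A'\otimes(b-\lambda_{\textbf{r},b}\cdot 1)B\cdot V=0$; in particular $t^{\textbf{s}}(b-\lambda_{\textbf{r},b}\cdot 1)b'\cdot V=0$ for every $\textbf{s}\neq\textbf{0}$ and every $b'\in B$. Rearranging yields the key identity $t^{\textbf{s}}bb'=\lambda_{\textbf{r},b}\,t^{\textbf{s}}b'$ on $V$, valid for all $b,b'\in B$ and all $\textbf{r},\textbf{s}\in\mathbb{Z}^{N}\setminus\{\textbf{0}\}$. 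Taking $b'=1$ recovers $t^{\textbf{s}}b=\lambda_{\textbf{s},b}t^{\textbf{s}}$, and comparing the two resulting expressions for $t^{\textbf{s}}bb'$ gives $\lambda_{\textbf{s},bb'}\,t^{\textbf{s}}=\lambda_{\textbf{r},b}\lambda_{\textbf{s},b'}\,t^{\textbf{s}}$ on $V$.

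Finally, injectivity of $t^{\textbf{s}}$ for every $\textbf{s}\neq\textbf{0}$ --- which holds by Proposition~4.4 together with Lemma~4.5, since $A'$ acts nontrivially --- lets me cancel $t^{\textbf{s}}$ and conclude $\lambda_{\textbf{s},bb'}=\lambda_{\textbf{r},b}\lambda_{\textbf{s},b'}$. Setting $b'=1$ and using $\lambda_{\textbf{s},1}=1$ gives $\lambda_{\textbf{r},b}=\lambda_{\textbf{s},b}$ for all $\textbf{r},\textbf{s}\neq\textbf{0}$, so the scalar is independent of the degree; writing $\psi(b)$ for this common value (already known to be linear in $b$) and taking $\textbf{r}=\textbf{s}$ in the multiplicativity relation yields $\psi(bb')=\psi(b)\psi(b')$ and $\psi(1)=1$, so $\psi$ is the required algebra homomorphism. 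Since every step is a direct port of the Special-algebra proof, I do not anticipate a genuinely new obstacle; the only point requiring care is that Lemma~4.5 be applicable to the possibly non-invertible (indeed possibly nilpotent) element $b-\lambda_{\textbf{r},b}\cdot 1$ of $B$, but the lemma is stated for an arbitrary $b\in B$ whose associated operator acts nilpotently, so it applies verbatim.
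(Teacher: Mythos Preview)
Your proposal is correct and follows exactly the approach the paper intends: Proposition~4.6 is stated without an explicit proof in the paper, but Section~4 is declared to ``run parallel to Section~3,'' and your argument is precisely the transcription of the proof of Proposition~3.7 with Lemma~3.3, Proposition~3.4, and Theorem~3.5 replaced by their Hamiltonian analogues Lemma~4.3, Proposition~4.4, and Lemma~4.5. The only delicate point you flag---that Lemma~4.5 must apply to the element $b-\lambda_{\textbf{r},b}\cdot 1$---is handled exactly as in the Special case, and the lemma's hypothesis (local nilpotency of some $t^{\textbf{r}}b$, as its proof makes clear) is indeed supplied by Lemma~4.3.
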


We observe the following quasi-associative action of $A_{N}\otimes B$ on $V$.
\begin{equation}
    t^{\textbf{m}}b\cdot t^{\textbf{n}}b'= \lambda_{\textbf{m},\textbf{n},b,b'}t^{\textbf{m}+\textbf{n}} \: \: \text{on $V$},
\end{equation}
 where $\lambda_{\textbf{m},\textbf{n},b,b'} =\begin{cases}
     \psi(bb')\lambda, \:  \text{ if} \:  \textbf{m},\textbf{n},\textbf{m}+\textbf{n} \in \mathbb{Z}^{N}\setminus\{\textbf{0}\}, \\
     \psi(bb')\mu, \; \text{if} \; \textbf{m}.\textbf{n} \in \mathbb{Z}^{N}\setminus\{\textbf{0}\}, \textbf{m}+\textbf{n}=\textbf{0}  \\
     \psi(b)\phi(b')c, \: \text{if} \; \textbf{m}\neq \textbf{0}, \textbf{n}=\textbf{0}, \\
     \phi(b)\phi(b')c, \; \text{if} \; \textbf{m}=\textbf{n}=\textbf{0}.
 \end{cases}$ 
 \\
 This follows from arguments analogous to those Theorem 3.8 for $\mathcal{S}_{N}$ and Theorem 3.9.
 \subsection{Action of $\mathcal{\tilde{H}}_{N}\otimes B$}
 We now investigate the action of $\mathcal{\tilde{H}}_{N}\otimes B$ on $V$. Let $\lambda \in \mathfrak{h}^*$ be a weight of $V$. From the irreducibility of $V$, it follows that $V=\bigoplus_{\textbf{r}\in \mathbb{Z}^{N}}V_{\Lambda +\textbf{r}}$, where $\Lambda=(\lambda(d_{i}))_{i=1}^{N}$. Injectivity of $t^{\textbf{r}}$ impliess $t^{\textbf{r}}V_{\Lambda } = V_{\Lambda + \textbf{r}}$ for every $\textbf{r}\in \mathbb{Z}^{N}$. Therefore, we identify $V$ with $V_{\Lambda}\otimes A_{N}$. Let $\mathcal{U}$ be the universal enveloping algebra of $(\tilde{\mathcal{H}}_{N}\ltimes A_{N})\otimes B$, then $\mathcal{U}$ is a Lie algebra of $\mathbb{Z}^{N}$-graded Lie algebra and $\mathcal{U}=\bigoplus_{\textbf{n}\in \mathbb{Z}^{N}}\mathcal{U}_{\textbf{n}}$, where  $\mathcal{U}_{\textbf{n}}=\{ u\in \mathcal{U} \mid [d_{i},u]= n_{i}u, \: \forall \: 1\leq i \leq N\}$. Let $\mathcal{I}$ be the two-sided ideal of $\mathcal{U}$ generated by elements of the form $t^{\textbf{r}}b-\psi(b)t^{\textbf{r}}(\textbf{r}\neq \textbf{0}), t^{\textbf{0}}b-\phi(b)c$, $t^{\textbf{r}}bt^{\textbf{s}}b' -\lambda_{\textbf{r},\textbf{s},b,b'}t^{\textbf{r}+\textbf{s}}$. By the quasi-associative action of $A_{N}\otimes B$ on $V$, as observed above, it follows that $V$ is an irreducible $\mathcal{U}/\mathcal{I}$-module. \\
  Let  $T(\overline{\textbf{r}},\textbf{r},b_{1},b_{2})= t^{-\textbf{r}}b_{1}D(\overline{\textbf{r}},\textbf{r})b_{2}=\psi(b_{1})t^{-\textbf{r}}D(\overline{\textbf{r}},\textbf{r})b_{2}\in \mathcal{U}/\mathcal{I}$ and $T(u,\textbf{0},b_{1},b_{2})= t^{\textbf{0}}b_{1}D(u,0)b_{2}$ $  \in \mathcal{U}/\mathcal{I}$ and define the Lie algebra $D$ as follows. \begin{equation*}
     D= 
     \text{span}_{\mathbb{C}}\{T(\overline{\textbf{r}},\textbf{r},b_{1},b_{2}), T(u,\textbf{0},b_{1},b_{2}) \mid \textbf{r}\in \mathbb{Z}^{N}, u \in \mathbb{C}^{N}, b_{1},b_{2}\in B\}.
 \end{equation*}
 We have the following lemma.
 \begin{lemma}
     \begin{enumerate}
         \item D is a Lie algebra with the Lie bracket \begin{equation*}
             [T(\overline{\textbf{r}},\textbf{r},b_{1},b_{2}), T(\overline{\textbf{s}},\textbf{s}, b_{3},b_{4})]= (\overline{\textbf{r}},\textbf{s})T(\overline{\textbf{r}}+\overline{\textbf{s}},\textbf{r}+\textbf{s}, b_{1}b_{3},b_{2}b_{4}),
         \end{equation*}
         and 
         \begin{equation*}
             [T(u,\textbf{0},b_{1},b_{2}), T(\overline{\textbf{r}},\textbf{r},b_{3},b_{4})]=(u,\textbf{r})c\phi(b_{1})(-T(\overline{\textbf{r}},\textbf{r},b_{2}b_{3},b_{4})+ T(\overline{\textbf{r}},\textbf{r}, b_{3},b_{2}b_{4})).
             \end{equation*}
             \item $V_{\Lambda}$ is an irreducible $D$-module.
     \end{enumerate}
     \begin{proof}
         The proof is the same as that of $(\mathcal{S}_{N}\ltimes A_{N})\otimes B$, which is done earlier in Section 3.
     \end{proof}
 \end{lemma}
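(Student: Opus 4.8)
The plan is to prove both parts by direct computation inside the associative algebra $\mathcal{U}/\mathcal{I}$, reducing part (1) to the computation already carried out for $(\mathcal{S}_{N}\ltimes A_{N})\otimes B$ in Section 3 and part (2) to a verbatim copy of the corresponding argument there. For part (1), first observe that $h_{\textbf{0}}=D(\overline{\textbf{0}},\textbf{0})=0$, so every nonzero $h$-type generator has $\textbf{r}\neq\textbf{0}$; hence $t^{-\textbf{r}}b_{1}=\psi(b_{1})t^{-\textbf{r}}$ and $T(\overline{\textbf{r}},\textbf{r},b_{1},b_{2})=\psi(b_{1})t^{-\textbf{r}}D(\overline{\textbf{r}},\textbf{r})b_{2}$. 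Writing each generator as a product of a monomial factor and a vector-field factor, I would expand the commutator of two such products by the Leibniz rule; since $A_{N}$ is abelian the two monomial factors commute, so only three terms survive. These are evaluated using the vector-field action $[D(u,\textbf{p}),t^{\textbf{q}}]=(u,\textbf{q})t^{\textbf{p}+\textbf{q}}$, the Hamiltonian bracket $[h_{\textbf{r}},h_{\textbf{s}}]=(\overline{\textbf{r}},\textbf{s})h_{\textbf{r}+\textbf{s}}$, and the quasi-associativity relations, which collapse every product of two $t$-monomials into a single monomial. The decisive structural facts are that the bar map is linear, so $\overline{\textbf{r}}+\overline{\textbf{s}}=\overline{\textbf{r}+\textbf{s}}$ and the leading term is again of $h$-type, and that $(\overline{\textbf{r}},\textbf{r})=0$, so each $h$-type generator is a divergence-free $T$; the computation is thus the specialization $u=\overline{\textbf{r}}$, $v=\overline{\textbf{s}}$ of the Special-algebra bracket, and the antisymmetry $(\overline{\textbf{s}},\textbf{r})=-(\overline{\textbf{r}},\textbf{s})$ is what merges the cross terms into the single stated coefficient $(\overline{\textbf{r}},\textbf{s})$, showing the first bracket closes inside $D$.

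For the mixed bracket I would use that $t^{\textbf{0}}b_{1}=c\phi(b_{1})$ is a scalar in $\mathcal{U}/\mathcal{I}$, so $T(u,\textbf{0},b_{1},b_{2})=c\phi(b_{1})D(u,\textbf{0})b_{2}$, and then expand $[D(u,\textbf{0})b_{2},\,t^{-\textbf{r}}D(\overline{\textbf{r}},\textbf{r})b_{4}]$ using $[D(u,\textbf{0}),t^{-\textbf{r}}]=-(u,\textbf{r})t^{-\textbf{r}}$ and $[D(u,\textbf{0}),D(\overline{\textbf{r}},\textbf{r})]=(u,\textbf{r})D(\overline{\textbf{r}},\textbf{r})$. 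The two resulting summands, after absorbing the factor $\psi(b_{3})$ through $\psi(b_{3})t^{-\textbf{r}}b_{2}=t^{-\textbf{r}}b_{2}b_{3}$, are precisely $-T(\overline{\textbf{r}},\textbf{r},b_{2}b_{3},b_{4})$ and $+T(\overline{\textbf{r}},\textbf{r},b_{3},b_{2}b_{4})$, each scaled by $(u,\textbf{r})c\phi(b_{1})$, which is exactly the displayed formula; in particular $D$ is closed under the bracket, and since the Jacobi identity is inherited from $\mathcal{U}/\mathcal{I}$, $D$ is a Lie algebra.

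For part (2), the argument mirrors the Special-algebra case. Since $D\subseteq(\mathcal{U}/\mathcal{I})_{0}$ acts by degree-preserving operators, $V_{\Lambda}$ is a $D$-module. Given nonzero $v,w\in V_{\Lambda}$, irreducibility of $V$ provides $X\in\mathcal{U}/\mathcal{I}$ with $X\cdot v=w$; since $v$ and $w$ share the weight $\lambda$, only the degree-zero component of $X$ acts nontrivially, giving $X_{0}\in(\mathcal{U}/\mathcal{I})_{0}$ with $X_{0}\cdot v=w$. It then remains to identify $(\mathcal{U}/\mathcal{I})_{0}=\mathcal{U}(D)$: by the PBW theorem, every degree-zero product of generators can, using quasi-associativity, be rewritten so that each factor $D(u,\textbf{p})b$ with $\textbf{p}\neq\textbf{0}$ is paired with a monomial $t^{-\textbf{p}}b'$ into an $h$-type $T$, while each $D(u,\textbf{0})b$, together with a scalar $t^{\textbf{0}}b''=c\phi(b'')$, becomes a Cartan-type $T$. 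Hence $X_{0}\in\mathcal{U}(D)$, and $V_{\Lambda}$ is an irreducible $D$-module.

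I expect the genuine difficulty to lie in part (1): the careful bookkeeping of the quasi-associativity constants and the verification that the cross terms produced by the three Leibniz summands recombine, via the linearity and antisymmetry of the bar map, into the single displayed $h$-type term without leaving stray contributions. The identification $(\mathcal{U}/\mathcal{I})_{0}=\mathcal{U}(D)$ in part (2) is the secondary point requiring care, since it depends on systematically pairing positive- and negative-degree monomials using the quasi-associative relations.
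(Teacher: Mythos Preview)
Your overall strategy coincides with the paper's: both you and the paper reduce the Hamiltonian computation to the Special-algebra computation of Section~3 (the paper's proof is literally ``the proof is the same as that of $(\mathcal{S}_{N}\ltimes A_{N})\otimes B$''), and your argument for part~(2) via projection onto the degree-zero component and the PBW identification $(\mathcal{U}/\mathcal{I})_{0}=\mathcal{U}(D)$ is exactly what the paper does in Lemma~3.10(2).

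There is, however, one place where your expectation does not match what the computation actually gives. You anticipate that in the first bracket the three Leibniz summands ``recombine \ldots\ into the single displayed $h$-type term without leaving stray contributions.'' This is not the case. If you specialise the three-term formula of Lemma~3.10(1) at $u=\overline{\textbf{r}}$, $v=\overline{\textbf{s}}$, the antisymmetry $(\overline{\textbf{s}},\textbf{r})=-(\overline{\textbf{r}},\textbf{s})$ does collapse the \emph{leading} term $T(w,\textbf{r}+\textbf{s},\ldots)$ to $(\overline{\textbf{r}},\textbf{s})T(\overline{\textbf{r}+\textbf{s}},\textbf{r}+\textbf{s},b_{1}b_{3},b_{2}b_{4})$, but the two cross terms
\[
-(\overline{\textbf{r}},\textbf{s})\,T(\overline{\textbf{s}},\textbf{s},b_{1}b_{2}b_{3},b_{4})
\quad\text{and}\quad
-(\overline{\textbf{r}},\textbf{s})\,T(\overline{\textbf{r}},\textbf{r},b_{1}b_{3}b_{4},b_{2})
\]
survive and do not cancel one another. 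Indeed, the paper's own Proposition~4.8, which is the $T_{1}$-analogue of this very lemma, carries all three terms, so the one-term formula displayed here is almost certainly an omission in the statement rather than the result of a genuine cancellation. Your direct computation will therefore produce the correct three-term bracket; it simply will not match the formula as printed. This does not affect the substantive conclusions that $D$ is closed under the bracket (all three terms visibly lie in $D$) and that the Jacobi identity is inherited from $\mathcal{U}/\mathcal{I}$, so your plan still proves part~(1).
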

 We now define the following elements $T_{1}(\overline{\textbf{r}},\textbf{r},b_{1},b_{2})= t^{-\textbf{r}}b_{1}D(\overline{\textbf{r}},\textbf{r})b_{2}-cD(\overline{\textbf{r}}, \textbf{0})b_{1}b_{2} \in \mathcal{U}/\mathcal{I}$ and $I(\overline{\textbf{r}},\textbf{r},b_{1},b_{2})= \psi(b_{1})D(\overline{\textbf{r}},\textbf{r})b_{2}- cD(\overline{\textbf{r}},\textbf{0})b_{1}b_{2}\in \mathcal{\tilde{H}}_{N}\otimes B$ and define the Lie algebras $D_{1}$ and $D_{2}$ as follows:
 \begin{equation*}
     D_{1}=\text{span}_{\mathbb{C}}{\mathbb{}}\{ T_{1}(\overline{\textbf{r}},\textbf{r},b_{1},b_{2}) \mid \textbf{r}\in \mathbb{Z}^{N}, b_{1},b_{2}\in B \},
 \end{equation*}
 and 
 \begin{equation*}
     D_{2}= \text{span}_{\mathbb{C}}\{ I(\overline{\textbf{r}},\textbf{r},b_{1},b_{2}) \mid  \textbf{r}\in \mathbb{Z}^{N},b_{1},b_{2}\in B\},
 \end{equation*}
 which are the Lie subalgebras of $\mathcal{U}/\mathcal{I}$(resp. $\mathcal{\tilde{H}}_{N}\otimes B$) generated by $T_{1}(\overline{\textbf{r}},\textbf{r},b_{1},b_{2})$(resp. $I(\overline{\textbf{r}},\textbf{r},b_{1},b_{2})$).\\
 Define the subspace $W$ of $V$ by
 \begin{equation*}
     W=\text{span}_{\mathbb{C}}\{t^{\textbf{r}}\cdot v - cv\mid v\in V_{\Lambda}, \textbf{r}\in \mathbb{Z}^{N}\}.
 \end{equation*}
 Then $\text{dim}(V_{\Lambda})=\text{dim}(V/W)$.
 We have the following proposition.
 \begin{proposition}
     \begin{enumerate}
         \item $D_{1}$ is a Lie algebra with the bracket:\begin{align*}
             [T_{1}(\overline{\textbf{r}},\textbf{r},b_{1},b_{2}), T_{1}(\overline{\textbf{s}},\textbf{s},b_{3},b_{4} )]=(\overline{\textbf{r}},\textbf{s})T_{1}(\overline{\textbf{r}}+\overline{\textbf{s}},\textbf{r}+\textbf{s},b_{1}b_{3},b_{2}b_{4} ) \\-(\overline{\textbf{r}},\textbf{s})T_{1}(\overline{\textbf{s}},\textbf{s}, b_{3},b_{1}b_{2}b_{4})+(\overline{\textbf{s}},\textbf{r})T(\overline{\textbf{r}},\textbf{r}, b_{1},b_{2}b_{3}b_{4} ),
             \end{align*}
         and $[D,D]\subset D_{1}$.
         \item $D_{2}$ is a Lie subalgebra of $\mathcal{\tilde{H}}_{N}\otimes B$ with the bracket:
         \begin{align*}
          [I(\overline{\textbf{r}},\textbf{r},b_{1},b_{2}), I(\overline{\textbf{s}},\textbf{s},b_{3},b_{4} )]=(\overline{\textbf{r}},\textbf{s})I(\overline{\textbf{r}}+\overline{\textbf{s}},\textbf{r}+\textbf{s},b_{1}b_{3},b_{2}b_{4} ) \\-(\overline{\textbf{r}},\textbf{s})I(\overline{\textbf{s}},\textbf{s}, b_{3},b_{1}b_{2}b_{4})+(\overline{\textbf{s}},\textbf{r})I(\overline{\textbf{r}},\textbf{r}, b_{1},b_{2}b_{3}b_{4} ),
         \end{align*}
         \item The mapping $\pi:D_{1}\to D_{2}$ defined by $\pi(T_{1}(\overline{\textbf{r}},\textbf{r}.b_{1},b_{2}))=I(\overline{\textbf{r}},\textbf{r}.b_{1},b_{2}) $ is a surjective Lie algebra homomorphism.
         \item $V_{\Lambda}$ is an irreducible $D_{1}$-module.
         \item $W$ is a $D_{2}$-module. In particular $V/W$ is a $D_{2}$-module and hence a $D_{1}$ module via $\pi$.
         \item $V_{\Lambda}\cong V/W$ as $D_{1}$-modules.
     \end{enumerate}
 \end{proposition}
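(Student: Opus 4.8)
The plan is to prove the six assertions in sequence, transplanting the argument used for $(\mathcal{S}_N\ltimes A_N)\otimes B$ in Proposition 3.10 (itself modeled on the Loop--Witt treatment of [9]) to the Hamiltonian setting, the only structural change being that the single generator $h_{\textbf{r}}=D(\overline{\textbf{r}},\textbf{r})$ replaces the family $d_{ij}(\textbf{r})$, so that the relevant non-degeneracy pairing is $(\overline{\textbf{r}},\textbf{s})$ rather than the determinant pairing of the Special case. Throughout, the computational inputs are Lemma 4.7, the quasi-associative relations for $A_N\otimes B$ established above (with constants $c,\lambda,\mu$ satisfying $\lambda^2=c\mu$), the multiplicativity $\psi(bb')=\psi(b)\psi(b')$ of Proposition 4.6, and the fact that $D(\overline{\textbf{r}},\textbf{0})\in\mathfrak{h}$ acts by a scalar on each weight space.

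For parts (1) and (2) I would expand the two brackets directly. Writing $T_1(\overline{\textbf{r}},\textbf{r},b_1,b_2)=T(\overline{\textbf{r}},\textbf{r},b_1,b_2)-cD(\overline{\textbf{r}},\textbf{0})b_1b_2$ and noting $cD(\overline{\textbf{r}},\textbf{0})b_1b_2=T(\overline{\textbf{r}},\textbf{0},1,b_1b_2)\in D$, the bracket $[T_1,T_1]$ splits into the $[T,T]$--term supplied by Lemma 4.7(1), two cross terms computed from $[D(\overline{\textbf{r}},\textbf{r}),D(\overline{\textbf{s}},\textbf{0})]=(\overline{\textbf{r}},\textbf{s})D(\overline{\textbf{r}},\textbf{r})$, and a vanishing $\mathfrak{h}\otimes B$--term; the degree-zero corrections then recombine into $T_1$'s to yield the displayed formula, and the same computation carried out in $\tilde{\mathcal{H}}_N\otimes B$ with the factor $t^{-\textbf{r}}$ suppressed produces the $D_2$--bracket. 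The inclusion $[D,D]\subseteq D_1$ follows because every commutator of generators of $D$ carries a factor $(\overline{\textbf{r}},\textbf{s})$ or $(u,\textbf{r})$ which annihilates the purely degree-zero generators $T(u,\textbf{0},\cdot,\cdot)$. Part (3) is then immediate: since $\pi$ simply replaces each $T_1$ by $I$ and the brackets in (1) and (2) are formally identical, $\pi$ preserves brackets, and it is surjective because both algebras are spanned by the matching generators.

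For part (4) I would note $D_1\subseteq D$ and that $D$ differs from $D_1$ only by the generators $T(u,\textbf{0},b_1,b_2)=c\phi(b_1)D(u,\textbf{0})b_2$, which act on $V_\Lambda$ as scalars; hence any $D_1$--submodule of $V_\Lambda$ is automatically $D$--stable, so the $D$--irreducibility from Lemma 4.7(2) forces $D_1$--irreducibility. For part (5) I would evaluate $I(\overline{\textbf{r}},\textbf{r},b_1,b_2)$ on a spanning vector $t^{\textbf{s}}v-cv$ of $W$, using $[D(\overline{\textbf{r}},\textbf{r})\otimes b_2,\,t^{\textbf{s}}\otimes 1]=(\overline{\textbf{r}},\textbf{s})t^{\textbf{r}+\textbf{s}}\otimes b_2$ together with the quasi-associative relations and the scalar action of $D(\overline{\textbf{r}},\textbf{0})$ to rewrite the result as a combination of vectors $t^{\textbf{k}}w-cw$, giving $D_2\cdot W\subseteq W$; then $V/W$ is a $D_2$--module and, via $\pi$, a $D_1$--module. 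Finally for (6), the composite $V_\Lambda\hookrightarrow V\twoheadrightarrow V/W$ is surjective since $t^{\textbf{r}}v\equiv cv\pmod{W}$ makes $V/W$ spanned by the images of $V_\Lambda$, and the equality $\dim V_\Lambda=\dim(V/W)$ upgrades it to a bijection; it intertwines the two $D_1$--actions precisely because $\pi$ and $W$ are built for this, giving the isomorphism.

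I expect the principal obstacle to be the bookkeeping in (1), (2) and (5): one must keep careful track of which algebra elements are absorbed by $\psi$ and which by $\phi$, and verify that the degree-zero correction terms $cD(\overline{\textbf{r}},\textbf{0})b_1b_2$, together with the quasi-associativity constants $c,\lambda,\mu$, cancel exactly against the $(\overline{\textbf{r}},\textbf{s})$--weighted pieces, so that no element outside $D_1$ (respectively outside $W$) survives. A Hamiltonian-specific subtlety, absent in the Witt and Special cases, is that since $u$ is forced to equal $\overline{\textbf{r}}$ in $h_{\textbf{r}}$, the non-degeneracy steps must invoke $(\overline{\textbf{r}},\textbf{s})\neq 0$ and the replacement trick $\textbf{n}\mapsto\textbf{n}+\overline{\textbf{m}}$ exactly as in the proof of Proposition 4.4.
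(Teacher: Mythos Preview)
Your proposal is correct and follows essentially the same route as the paper, which gives no independent argument here but simply says the proof is analogous to the $(\mathcal{S}_N\ltimes A_N)\otimes B$ case (Proposition~3.11), which in turn is deferred to the Loop--Witt treatment of~[9]. Your outline is exactly an unpacking of that deferred argument in the Hamiltonian setting; the only remark is that the final caveat about the $\textbf{n}\mapsto\textbf{n}+\overline{\textbf{m}}$ replacement trick is not actually needed for this proposition, since the bracket computations in (1), (2) and the module checks in (4)--(6) are purely formal and require no non-degeneracy of $(\overline{\textbf{r}},\textbf{s})$.
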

 \begin{proof}
     The proof is analogous to that of $(\mathcal{S}_{N}\ltimes A_{N})\otimes B$.
 \end{proof}
 As before, we define the following space.
 \begin{equation*}
     \tilde{D} = \text{span}_{\mathbb{C}}\{ \psi(b_{1})D(\overline{\textbf{r}},\textbf{r})b_{2}-D(\overline{\textbf{r}},\textbf{r})b_{1}b_{2} \mid \textbf{r} \in \mathbb{Z}^{N},b_{1},b_{2}\in B \}.
 \end{equation*}
 Then $\tilde{D} \subseteq D_{2}$ and $\tilde{D}= \mathcal{H}_{N}\otimes M$, where $M=\text{ker}\psi$.
 We now prove the following important lemma.
 \begin{lemma}
    The elements $D(\bar{\textbf{r}},\textbf{r})b-\psi(b)D(\bar{\textbf{r}},\textbf{r})$ act as scalars on $V/W$.
 \end{lemma}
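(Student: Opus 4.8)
The plan is to follow the two-case strategy of Lemma 3.12, replacing the fields $D(u,\textbf{r})$ with $(u,\textbf{r})=0$ and the operators $d_{ij}(\textbf{r})$ by the Hamiltonian fields $h_{\textbf{r}}=D(\overline{\textbf{r}},\textbf{r})$, and using throughout the bracket $[h_{\textbf{r}},h_{\textbf{s}}]=(\overline{\textbf{r}},\textbf{s})\,h_{\textbf{r}+\textbf{s}}$ of $\mathcal{H}_{N}$ together with the identities $(\overline{\textbf{r}},\textbf{r})=0$ and $(\overline{\textbf{s}},\textbf{r})=-(\overline{\textbf{r}},\textbf{s})$. By Proposition 4.8, $V/W$ is an irreducible $D_{2}$-module; let $\varphi\colon D_{2}\to\mathfrak{gl}(V/W)$ be the corresponding representation, set $\rho=\varphi|_{\tilde{D}}$ and $J=\ker\rho$, where $\tilde{D}=\mathcal{H}_{N}\otimes M$ with $M=\ker\psi$. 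Since the element $D(\overline{\textbf{r}},\textbf{r})b-\psi(b)D(\overline{\textbf{r}},\textbf{r})$ is, up to sign, the spanning vector of $\tilde{D}$ obtained by taking $b_{1}=b$ and $b_{2}=1$, it suffices to prove that $\varphi(\tilde{D})$ acts by scalars on $V/W$.

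First I would dispose of the case $M^{k}=0$ for some $k$. Because the bracket above shows $\mathcal{H}_{N}$ is perfect, i.e.\ $[\mathcal{H}_{N},\mathcal{H}_{N}]=\mathcal{H}_{N}$, one obtains $[\mathcal{H}_{N}\otimes M^{i},\mathcal{H}_{N}\otimes M^{j}]=\mathcal{H}_{N}\otimes M^{i+j}$, so $\tilde{D}$ is a nilpotent ideal of $D_{2}$ and $\varphi(\tilde{D})$ is a solvable ideal of $\varphi(D_{2})$. Lemma 2.1 then forces $\varphi(D_{2})$ to be reductive, and its solvable ideal $\varphi(\tilde{D})$ acts by scalars on the irreducible module $V/W$, which is exactly the assertion.

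The substance is the case $M^{k}\neq0$ for all $k$, where, exactly as in Case 2 of Lemma 3.12, it is enough to show $\mathcal{H}_{N}\otimes M^{k}\subseteq J$ for some $k$, since then $\varphi(\tilde{D})$ is a nilpotent, hence solvable, ideal of the reductive algebra $D_{2}/\ker\varphi$ and acts by scalars. For the analogue of Claim 1 I would use finiteness of $d=\dim((\mathcal{H}_{N}\otimes M)/J)$ to extract, for a fixed $b_{1}\in M$, a nontrivial relation $X=\sum_{i=1}^{k}h_{\textbf{r}_{i}}b_{1}\in J$ with $k\le d+1$ minimal and the $\textbf{r}_{i}$ pairwise non-proportional. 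Bracketing $X$ with a well-chosen $h_{\textbf{s}}b_{2}$ then lowers the number of summands via $[X,h_{\textbf{s}}b_{2}]=\sum_{i}(\overline{\textbf{r}_{i}},\textbf{s})\,h_{\textbf{r}_{i}+\textbf{s}}b_{1}b_{2}$: choosing $\textbf{s}\in\mathbb{Z}^{N}$ with $(\overline{\textbf{r}_{1}},\textbf{s})=0$ but $(\overline{\textbf{r}_{2}},\textbf{s})\neq0$, which is possible because the form is nondegenerate and $\textbf{r}_{2}\notin\mathbb{Q}\textbf{r}_{1}$, annihilates the $i=1$ term while retaining the $i=2$ term. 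Iterating produces $h_{\textbf{r}}\otimes M^{n}\subseteq J$ for some $\textbf{r}\neq\textbf{0}$ and some $n\le d+1$.

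The main obstacle is the analogue of Claim 2 (Subclaim 2.1): propagating this single direction $h_{\textbf{r}}$ to all of $\mathcal{H}_{N}$ at the cost of two extra factors of $M$. For a target $\textbf{t}$ with $(\overline{\textbf{r}},\textbf{t})\neq0$ one has immediately $[h_{\textbf{r}},h_{\textbf{t}-\textbf{r}}]=(\overline{\textbf{r}},\textbf{t})\,h_{\textbf{t}}$, using $(\overline{\textbf{r}},\textbf{t}-\textbf{r})=(\overline{\textbf{r}},\textbf{t})$. The delicate case is $(\overline{\textbf{r}},\textbf{t})=0$: here I would first move $\textbf{r}$ to $\textbf{r}'=\textbf{r}+\textbf{s}$ through $[h_{\textbf{r}},h_{\textbf{s}}]=(\overline{\textbf{r}},\textbf{s})\,h_{\textbf{r}'}$, where $\textbf{s}$ is chosen with $(\overline{\textbf{r}},\textbf{s})\neq0$ and $(\overline{\textbf{s}},\textbf{t})\neq0$, so that $(\overline{\textbf{r}'},\textbf{t})=(\overline{\textbf{s}},\textbf{t})\neq0$, and then apply the previous step. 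Verifying that such $\textbf{s}$ exists amounts to avoiding two rational hyperplanes and uses only the nondegeneracy of the symplectic form and $N=2m\ge2$; the low-rank case $m=1$ is checked directly, parallel to the separate treatment of $N=2$ in Subclaim 2.1. Assembling these brackets gives $\mathcal{H}_{N}\otimes M^{n+2}\subseteq J$, completing the second case and hence the lemma.
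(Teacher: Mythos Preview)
Your proposal is correct and follows the same two-case strategy as the paper's proof; the propagation step (your Claim 2 analogue), including the two-step move when $(\overline{\textbf{r}},\textbf{t})=0$ via an auxiliary $\textbf{s}$ with $(\overline{\textbf{r}},\textbf{s})\neq 0$ and $(\overline{\textbf{s}},\textbf{t})\neq 0$, matches the paper almost word for word. The only variation is in the reduction step (Claim 1): you bracket once with $h_{\textbf{s}}b_2$ chosen so that $(\overline{\textbf{r}_1},\textbf{s})=0$ but $(\overline{\textbf{r}_2},\textbf{s})\neq 0$, whereas the paper brackets twice, first with $h_{-\textbf{r}_1}b_2$ (which kills the $i=1$ term automatically since $(\overline{\textbf{r}_1},\textbf{r}_1)=0$, while the extra hypothesis $\textbf{r}_i\notin\textbf{r}_j^\perp$ imposed on the initial family keeps the rest) and then with $h_{\textbf{r}_1}b_3$ to return the surviving terms to their original positions $\textbf{r}_2,\ldots,\textbf{r}_k$. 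The paper's back-and-forth trick costs an extra factor of $M$ per step but sidesteps a point you glide over: after your single bracket the shifted positions $\textbf{r}_i+\textbf{s}$ need not remain pairwise non-proportional, and if several fall on one line the same argument cannot be iterated (since $[h_{n\textbf{p}},h_{\textbf{s}'}]=n(\overline{\textbf{p}},\textbf{s}')h_{n\textbf{p}+\textbf{s}'}$ vanishes uniformly in $n$). This is easily repaired by choosing $\textbf{s}$ generically within the hyperplane $\overline{\textbf{r}_1}^\perp$, so your approach goes through as well.
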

 \begin{proof}
     By Proposition 4.7. $V/W$ is an irreducible $D_{2}$-module. Let $\varphi: D_{2}\to \mathfrak{gl}(V/W)$ be the Lie algebra homomorphism arising from above irreducible module structure and $\rho= \varphi|_{\tilde{D}}$. If $M^{k}=0$ for some $k\in \mathbb{N}$, then the result follows by the argument used in Lemma 3.12. If $M^{k}\neq 0$, for every $k\in \mathbb{N}$. If show that $\mathcal{H}_{N}\otimes M^{k}$ acts trivially on $V/W$ for some $k\in 
     \mathbb{N}$, then the result will follows from the fact that $\varphi(\mathcal{H}_{N}\otimes M)$ is a solvable ideal of the reductive Lie algebra $\varphi(D_{2})$, therefore it acts by scalars on $V/W$ by Lemma 2.1. \\
     Let $d= \text{dim}(\mathcal{H}_{N}\otimes M)/J$, where $J=\text{ker}\rho$. We first show that $D(\overline{\textbf{r}},\textbf{r})\otimes M^{n}\subseteq J$, for some $\textbf{r}\in \mathbb{Z}^{N}$ and $n \in \mathbb{Z}_{+}$. Consider the following set 
     \begin{equation*}
         \{ D(\overline{\textbf{r}_{i}}, \textbf{r}_{i})b_{1}+ J \mid \textbf{r}_{i}\notin \mathbb{Q}\textbf{r}_{j}, \textbf{r}_{i} \notin \textbf{r}_{j}^\perp \}.
     \end{equation*}
    Note that the set described above is an infinite set. There exists non-zero scalars $c_{1},c_{2},..., c_{k}$($k\leq d+1$) such that $0\neq X = \sum_{i=1}^{k}c_{i}D(\overline{\textbf{r}_{i}},\textbf{r}_{i} )b_{1} \in J $. Then, \begin{equation*}
        X^{[1]}=[X,D(-\overline{\textbf{r}_{1}},-\textbf{r}_{1})b_{2}]=\sum_{i=2}^{k}c_{i}(\overline{\textbf{r}_{i}},\textbf{r}_{1})D(\overline{\textbf{r}_{i}} - \overline{\textbf{r}_{1}},\textbf{r}_{i} -\textbf{r}_{1})b_{1}b_{2}\in J
    \end{equation*}
    Taking the commutator of $ X^{[1]}$ with $D(\overline{\textbf{r}_{1}},\textbf{r}_{1})b_{3}$, we find:
    \begin{equation*}
        X^{[2]}=\sum_{i=2}^{k}c_{i}(\overline{\textbf{r}_{i}},\textbf{r}_{i})^{2}D(\overline{\textbf{r}_{i}},\textbf{r}_{i})b_{1}b_{2}b_{3}\in J.
    \end{equation*}\
    Inductively, it follows that $ X^{[k]}=D(\overline{\textbf{r}_{k}}-\overline{\textbf{r}_{k-1}}, \textbf{r}_{k}-\textbf{r}_{k-1})b_{1}b_{2}\ldots b_{k+1}\in J$. Thus, we find an element $D(\bar{\textbf{r}},\textbf{r})b_{1}b_{2}\ldots b_{2k-1}\in J$.
   \\ \textbf{Claim:} $\mathcal{H}_{N}'\otimes M^{k+3}\subseteq J$. 
   \\
   \textbf{Case 1:} For the $\textbf{r}$ above if  $\textbf{t}\in \mathbb{Z}^{N}\setminus\{\textbf{0}\}$ is such that $(\overline{\textbf{r}},\textbf{t})\neq 0$, then for $\textbf{s}=\textbf{t}-\textbf{r}$, we have: \begin{equation*}
        [D(\overline{\textbf{r}},\textbf{r})b_{1}\ldots b_{k+1}, D(\overline{\textbf{s}},\textbf{s})b_{k+2}b_{k+3}] = (\overline{\textbf{r}},\textbf{s})D(\overline{\textbf{t}},\textbf{t})b_{1}\ldots b_{k+3}\in J.
    \end{equation*}
    \textbf{Case 2:} If $\textbf{t}\in \mathbb{Z}^{N}$ is such that $(\overline{\textbf{r}},\textbf{t})=0 $, then we choose $\textbf{s}\in \mathbb{Z}^{N}$ such that $(\overline{\textbf{r}},\textbf{s})\neq 0$ and $(\overline{\textbf{r}}+\overline{\textbf{s}}, \textbf{t})\neq 0$, that is $(\overline{\textbf{s}},\textbf{t})\neq 0$. For $\textbf{r}'=\textbf{r}+\textbf{s}$, we can find $ \textbf{s}'\in \mathbb{Z}^{N}\setminus\{\textbf{0} \}$ such that $[D(\overline{\textbf{r}'},\textbf{r}'), D(\overline{\textbf{s}'},\textbf{s}')] =(\overline{\textbf{s}},\textbf{t})D(\overline{\textbf{t}},\textbf{t})$, follows from Case 1, given that $(\overline{\textbf{r}'},\textbf{t})\neq 0$. For such $\textbf{s}\in \mathbb{Z}^{N}\setminus\{\textbf{0}\}$, we take $ \textbf{s}'= \textbf{t}-\textbf{r}-\textbf{s}$, then \begin{equation*}
        [[D(\overline{\textbf{r}},\textbf{r})b_{1}\ldots b_{k+1},D(\overline{\textbf{s}},\textbf{s})b_{k+2}, D(\overline{\textbf{s}'},\textbf{s}')b_{k+3}]=(\overline{\textbf{r}},\textbf{s})(\overline{\textbf{s}},\textbf{t})D(\overline{\textbf{t}},\textbf{t})b_{1}\ldots b_{k+3} \in J.
    \end{equation*} 
    This completes the proof.
 \end{proof}
 As a consequence, the elements $ D(\overline{\textbf{r}},\textbf{r})b -\psi(b)D(\overline{\textbf{r}},\textbf{r})$ act as scalars on $V/W$.  Note that this element is the $\pi$-image of $t^{-\textbf{r}}D(\overline{\textbf{r}},\textbf{r})b-\psi(b)t^{-\textbf{r}}D(\overline{\textbf{r}},\textbf{r}) = t^{-\textbf{r}}D(\overline{\textbf{r}},\textbf{r})b - cD(\overline{\textbf{r}},\textbf{0})b -(t^{-\textbf{r}}bD(\overline{\textbf{r}}, \textbf{r})-cD(\overline{\textbf{r}},\textbf{0})b) \in D_{1} $, and hence $ \pi(t^{-\textbf{r}}D(\overline{\textbf{r}},\textbf{r})b-\psi(b)t^{-\textbf{r}}D(\overline{\textbf{r}},\textbf{r}))=D(\overline{\textbf{r}},\textbf{r})b -\psi(b)D(\overline{\textbf{r}},\textbf{r}) $. By the isomorphism of the $D_{1}$-modules $V_{\Lambda}$ and $V/W$, the elements $t^{-\textbf{r}}D(\overline{\textbf{r}},\textbf{r})b -\psi(b)t^{-\textbf{r}}D(\overline{\textbf{r}},\textbf{r}) $ act as scalars(say, $\lambda(\textbf{r},b)$) on $V_{\Lambda}$, i.e. \begin{equation}
     t^{-\textbf{r}}D(\overline{\textbf{r}},\textbf{r})b = \psi(b)t^{-\textbf{r}}D(\overline{\textbf{r}},\textbf{r})+\lambda(\textbf{r},b) \; \text{on} \; V_{\Lambda}.
 \end{equation} In what follows, we consider the action of the elements $D(u,\textbf{0})b$ on the irreducible $D$-module $V_{\Lambda}$. Note that $[D(u,\textbf{0})b,t^{-\textbf{r}}D(\overline{\textbf{r}},\textbf{r})b']=(u,\textbf{r})(-\psi(b)\lambda(\textbf{r},b')+ \lambda(\textbf{r},bb'))$ on $V_{\Lambda}$. Choosing $u\in \mathbb{C}^{N}$ such that $(u,\textbf{r})\neq 0$, then, since $[D(u,\textbf{0}),t^{-\textbf{r}}D(\overline{\textbf{r}},\textbf{r})]\in \mathfrak{sl}(V_{\Lambda})$ as an operator on $V_{\Lambda}$. By the zero trace condition of the operators $[D(u,\textbf{0})b,t^{-\textbf{r}}D(\overline{\textbf{r}},\textbf{r})b']$, we obtain: $\lambda(\textbf{r},bb')=\psi(b)\lambda(\textbf{r},b')=0$. In particular, we have $\lambda(\textbf{r},b)=0$, for all $b\in B$. 
 
 \par By Schur's lemma, the elements $D(u,\textbf{0})b$ act as scalars on $V_{\Lambda}$, say by $f(u,b)$, then it is easy to see that $f(u,1)=(u,\Lambda)$ and $f$ is bilinear
 . Note that $D(u,\textbf{0})b$ acts as $f(u,b)+\psi(b)(u,\textbf{r})$ on $V_{\Lambda+\textbf{r}}$, for each $\textbf{r}\in \mathbb{Z}^{N}$. Applying $t^{\textbf{r}}$ on both sides of (4.9.25
 ), we have: $D(\overline{\textbf{r}},\textbf{r})b=\psi(b)D(\overline{\textbf{r}},\textbf{r})$ on $V_{\Lambda}$. We have the following lemma. 
 \begin{lemma}
     The elements $D(\overline{\textbf{r}},\textbf{r})b$ act as $\psi(b)D(\overline{\textbf{r}},\textbf{r})$, for all $\textbf{r}\in \mathbb{Z}^{N}$ on $V$.
 \end{lemma}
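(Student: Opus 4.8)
The plan is to bootstrap the identity from the base weight space $V_{\Lambda}$, where it has already been established in the discussion immediately preceding the lemma, to every graded component, using the injectivity of the operators $t^{\mathbf{n}}$. First I would dispose of the trivial case $\mathbf{r}=\mathbf{0}$, for which $D(\overline{\mathbf{0}},\mathbf{0})=0$ and both sides vanish; so fix $\mathbf{r}\in\mathbb{Z}^{N}\setminus\{\mathbf{0}\}$ and $b\in B$. Since $V=\bigoplus_{\mathbf{n}\in\mathbb{Z}^{N}}V_{\Lambda+\mathbf{n}}$ and $D(\overline{\mathbf{r}},\mathbf{r})b$ preserves the grading, it suffices to verify the asserted identity on each component $V_{\Lambda+\mathbf{n}}$. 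Because $t^{\mathbf{n}}$ is injective we have $V_{\Lambda+\mathbf{n}}=t^{\mathbf{n}}V_{\Lambda}$, so every $v'\in V_{\Lambda+\mathbf{n}}$ can be written as $v'=t^{\mathbf{n}}v$ with $v\in V_{\Lambda}$.

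The heart of the argument is a single commutator computation, exactly parallel to Corollary 3.15 in the Special case. Pushing $D(\overline{\mathbf{r}},\mathbf{r})b$ past $t^{\mathbf{n}}$ and using the semidirect-product action $[D(\overline{\mathbf{r}},\mathbf{r}),t^{\mathbf{n}}]=(\overline{\mathbf{r}},\mathbf{n})\,t^{\mathbf{r}+\mathbf{n}}$, I obtain
\[
D(\overline{\mathbf{r}},\mathbf{r})b\cdot v' = (\overline{\mathbf{r}},\mathbf{n})\,t^{\mathbf{r}+\mathbf{n}}b\cdot v + t^{\mathbf{n}}\,D(\overline{\mathbf{r}},\mathbf{r})b\cdot v .
\]
On the second summand I invoke the already-known relation $D(\overline{\mathbf{r}},\mathbf{r})b\cdot v=\psi(b)D(\overline{\mathbf{r}},\mathbf{r})\cdot v$ valid on $V_{\Lambda}$; on the first summand, when $\mathbf{r}+\mathbf{n}\neq\mathbf{0}$ I apply Proposition 4.5, namely $t^{\mathbf{r}+\mathbf{n}}b=\psi(b)t^{\mathbf{r}+\mathbf{n}}$. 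Factoring out $\psi(b)$ then reassembles the commutator back into $\psi(b)\,D(\overline{\mathbf{r}},\mathbf{r})\cdot t^{\mathbf{n}}v=\psi(b)\,D(\overline{\mathbf{r}},\mathbf{r})\cdot v'$, which is the desired identity.

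The one point requiring care — and the only place the Hamiltonian structure genuinely enters — is the exceptional component $\mathbf{r}+\mathbf{n}=\mathbf{0}$. There the first summand involves $t^{\mathbf{0}}b=c\phi(b)$, which is governed by the linear map $\phi$ rather than by the homomorphism $\psi$, so the factoring of $\psi(b)$ appears to break down. However, in this case $\mathbf{n}=-\mathbf{r}$ and the scalar coefficient is $(\overline{\mathbf{r}},\mathbf{n})=-(\overline{\mathbf{r}},\mathbf{r})=0$, because $(\overline{\mathbf{r}},\mathbf{r})=0$ for every $\mathbf{r}\in\mathbb{Z}^{N}$ — this is precisely the isotropy identity that places $h_{\mathbf{r}}=D(\overline{\mathbf{r}},\mathbf{r})$ inside $\mathcal{H}_{N}$. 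Hence the problematic term vanishes identically and the same reassembly applies. I therefore anticipate no real obstacle: the vanishing of $(\overline{\mathbf{r}},\mathbf{r})$ plays here the role that the condition $(u,\mathbf{r})=0$ played in Corollary 3.15, neutralizing exactly the degree-zero contribution where $\phi$ would otherwise interfere.
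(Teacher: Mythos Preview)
Your proof is correct and follows essentially the same route as the paper: reduce to each graded component via $V_{\Lambda+\mathbf{n}}=t^{\mathbf{n}}V_{\Lambda}$, push $D(\overline{\mathbf{r}},\mathbf{r})b$ past $t^{\mathbf{n}}$ using the commutator $[D(\overline{\mathbf{r}},\mathbf{r})b,t^{\mathbf{n}}]=(\overline{\mathbf{r}},\mathbf{n})t^{\mathbf{r}+\mathbf{n}}b$, and apply the already established identities on $V_{\Lambda}$ together with $t^{\mathbf{r}+\mathbf{n}}b=\psi(b)t^{\mathbf{r}+\mathbf{n}}$. You are in fact more careful than the paper in explicitly handling the borderline case $\mathbf{r}+\mathbf{n}=\mathbf{0}$ via the isotropy $(\overline{\mathbf{r}},\mathbf{r})=0$; one minor slip is the phrase ``preserves the grading'' (the operator has degree $\mathbf{r}$, not zero), but your conclusion that it suffices to check the identity on each graded piece is of course still valid.
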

 \begin{proof}
     It is sufficient to prove the equality on every graded component $V_{\Lambda +\textbf{s}}$, $\textbf{s}\in\mathbb{Z}^{N}$. Let $w\in V_{\Lambda+\textbf{s}}$, then $w=t^{\textbf{s}}\cdot 
     v$ for some $v\in V_{\Lambda}$. Consider $D(\overline{\textbf{r}},\textbf{r})b\cdot t^{\textbf{s}}\cdot v= t^{\textbf{s}}\cdot D(\overline{\textbf{r}},\textbf{r})b\cdot v + [D(\overline{\textbf{r}},\textbf{r})b, t^{\textbf{s}} ]\cdot v$, yielding $D(\overline{\textbf{r}},\textbf{r})b\cdot t^{\textbf{s}}\cdot v = \psi(b)t^{\textbf{s}}\cdot D(\overline{\textbf{r}},\textbf{r})\cdot v + (\overline{\textbf{r}},\textbf{s})t^{\textbf{r}+\textbf{s}}b\cdot v $, and hence $ D(\overline{\textbf{r}},\textbf{r})b\cdot t^{\textbf{s}}\cdot v= \psi(b)t^{\textbf{s}}\cdot D(\overline{\textbf{r}},\textbf{r})\cdot v +\psi (b)(\overline{\textbf{r}},\textbf{s})t^{\textbf{r}+\textbf{s}}\cdot v $. Therefore, we have $D(\overline{\textbf{r}},\textbf{r})b\cdot w=\psi(b)D(\overline{\textbf{r}},\textbf{r})\cdot w $, for $w\in  V_{\Lambda + \textbf{r}}$, for every $\textbf{r}\in \mathbb{Z}^{N}$. \end{proof}
We now show that $V$ is an irreducible weight module for the underlying Lie algebra $\mathcal{\tilde{H}}_{N}\ltimes A_{N}$ with finite-dimensional weight modules. We first prove the following proposition.
\begin{proposition}
    For any graded vector(nonzero) v of $V$, we have $\mathcal{U}((\mathcal{\tilde{H}}_{N}\ltimes A_{N})\otimes B)\cdot v = \mathcal{U}(\mathcal{\tilde{H}}_{N}\ltimes A_{N})\cdot v$.
\end{proposition}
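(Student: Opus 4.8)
The plan is to establish the nontrivial inclusion $\mathcal{U}((\tilde{\mathcal{H}}_{N}\ltimes A_{N})\otimes B)\cdot v \subseteq \mathcal{U}(\tilde{\mathcal{H}}_{N}\ltimes A_{N})\cdot v$; the reverse inclusion is automatic, since $\tilde{\mathcal{H}}_{N}\ltimes A_{N}$ embeds into $(\tilde{\mathcal{H}}_{N}\ltimes A_{N})\otimes B$ as $(\tilde{\mathcal{H}}_{N}\ltimes A_{N})\otimes 1$. By the PBW theorem, $\mathcal{U}((\tilde{\mathcal{H}}_{N}\ltimes A_{N})\otimes B)$ is spanned by ordered monomials in the homogeneous generators of $(\tilde{\mathcal{H}}_{N}\ltimes A_{N})\otimes B$, so it suffices to check that $x_{1}\cdots x_{n}\cdot v\in \mathcal{U}(\tilde{\mathcal{H}}_{N}\ltimes A_{N})\cdot v$ for every string $x_{1},\dots,x_{n}$ of homogeneous generators. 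These generators are, in nonzero degree $\textbf{r}\neq\textbf{0}$, the elements $t^{\textbf{r}}b$ and $h_{\textbf{r}}b=D(\overline{\textbf{r}},\textbf{r})b$ (the degree-$\textbf{r}$ part of $\mathcal{H}_{N}$ being the line $\mathbb{C}h_{\textbf{r}}$), and, in degree zero, the elements $t^{\textbf{0}}b$ and $D(u,\textbf{0})b$ (the degree-zero part of $\tilde{\mathcal{H}}_{N}$ being the Cartan $\mathfrak{h}$).

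The key structural observation is that since $v$ is a graded vector, every intermediate vector $x_{j}\cdots x_{n}\cdot v$ is again graded, each homogeneous generator merely shifting the weight by its degree. I would therefore argue by induction on $n$, using the action of the four types of generators on an arbitrary graded vector $w\in V_{\Lambda+\textbf{k}}$: by Proposition 4.6 one has $t^{\textbf{r}}b\cdot w=\psi(b)\,t^{\textbf{r}}\cdot w$, and by Lemma 4.10 one has $D(\overline{\textbf{r}},\textbf{r})b\cdot w=\psi(b)\,D(\overline{\textbf{r}},\textbf{r})\cdot w$ for $\textbf{r}\neq\textbf{0}$, so that both nonzero-degree generators reduce to $\psi(b)$ times an element of $\tilde{\mathcal{H}}_{N}\ltimes A_{N}$ applied to $w$; while $t^{\textbf{0}}b$ acts as the scalar $c\phi(b)$ and $D(u,\textbf{0})b$ acts as the scalar $f(u,b)+\psi(b)(u,\textbf{k})$ on $w$.

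Carrying out the induction, set $w=x_{2}\cdots x_{n}\cdot v$, which is graded and, by the inductive hypothesis, lies in $\mathcal{U}(\tilde{\mathcal{H}}_{N}\ltimes A_{N})\cdot v$. If $x_{1}$ is a degree-zero generator, it acts on the graded vector $w$ by a scalar, so $x_{1}\cdot w\in\mathbb{C}w\subseteq\mathcal{U}(\tilde{\mathcal{H}}_{N}\ltimes A_{N})\cdot v$. If $x_{1}$ has nonzero degree, then $x_{1}\cdot w=\psi(b)\,z\cdot w$ with $z\in\{t^{\textbf{r}},h_{\textbf{r}}\}\subseteq\tilde{\mathcal{H}}_{N}\ltimes A_{N}$, and since $w\in\mathcal{U}(\tilde{\mathcal{H}}_{N}\ltimes A_{N})\cdot v$ this again belongs to $\mathcal{U}(\tilde{\mathcal{H}}_{N}\ltimes A_{N})\cdot v$. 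This closes the induction and yields the claimed inclusion.

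I do not anticipate a genuine obstacle here, as the substantive work has already been carried out in Proposition 4.6 and Lemma 4.10. The only points deserving attention are the bookkeeping of the homogeneous generators and, above all, the essential use of the hypothesis that $v$ is graded: for a non-homogeneous vector the generator $D(u,\textbf{0})b$ would act by different scalars on different weight components, and one would then have to invoke the rewriting device of Lemma 3.16, expressing $D(u,\textbf{0})b$ through $\psi(b)D(u,\textbf{0})\in\tilde{\mathcal{H}}_{N}\ltimes A_{N}$ plus scalar corrections, which is unnecessary in the present graded setting.
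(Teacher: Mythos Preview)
Your argument is correct and is precisely the approach the paper has in mind: the paper states Proposition~4.11 without an explicit proof, relying (as in the $\mathcal{S}_{N}$ case just before Lemma~3.16) on Proposition~4.6, Lemma~4.10, and the scalar action of the degree-zero generators on each weight space, which is exactly the induction you have written out.
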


Using Lemma 3.16(which can be proved similarly for $(\mathcal{\tilde{H}}_{N}\ltimes A_{N})\otimes B$) and Proposition 4.11, we conclude that for any nonzero vector $v$ of $V$, we have 
\begin{equation}
    \mathcal{U}(\mathcal{\tilde{H}}_{N}\ltimes A_{N})\cdot v = V.
\end{equation}
It follows that $V$ is an irreducible Harish-Chandra module for $(\mathcal{\tilde{H}}_{N}\ltimes A_{N})$ and the action of $A_{N}'$ and $t^{\textbf{0}}$ is nontrivial on $V$. In addition, the action of $A_{N}$ on $V$ can be verified to be quasi-associative; $ V$ is therefore an irreducible jet module for the Lie algebra $\mathcal{\tilde{H}}_{N}$. Such modules are classified by J. Talboom in [30]. We now proceed to state the main theorem concerning the structure of $V$.

\begin{theorem}
     Let $V$ be an irreducible uniformly bounded weight module for $(\tilde{\mathcal{H}}_{N}\otimes A_{N})\otimes B$, with the condition that $t^{\textbf{s}}$ and $t^{0}$ act non-trivially on $V$, for some $0\neq \textbf{r}\in \mathbb{Z}^{N}$, then $V$ be can be written as $V=\bigoplus_{\textbf{r}\in \mathbb{Z}^{N}}V_{\Lambda +\textbf{r}}$, where $V_{\Lambda +\textbf{r}}=\{v\in V \mid D(u,\textbf{0})\cdot v = (u,\Lambda +\textbf{r})v,\: \forall\; u \in \mathbb{C}^{N}\}$ and $\Lambda =(\lambda(d_{i}))_{i=1}^{N}$ for some $\lambda \in P(V)$. The elements $t^{\textbf{r}}$ act injectively on $V$ for every $\textbf{r}\in \mathbb{Z}^{N}$ and the set of weights of $V$ is given by $P(V)=\lambda + \sum_{i=1}^{N}\mathbb{Z}\delta_{i}$.  Moreover, there exists an algebra homomorphism $\psi: B \to \mathbb{C}$ with $\psi(1)=1$, a linear map $\phi:B \to \mathbb{C}$ with $\phi(1)=1$ and a bilinear function $f: \mathbb{C}^{N}\times B \to \mathbb{C}$ with $f(u,1)=(u,\Lambda)$ such that \begin{align*}
        t^{\textbf{r}}(b) &= \psi(b)t^{\textbf{r}} \; \text{on $V$, for} \; \textbf{r}\neq \textbf{0}, \\ 
        t^{\textbf{0}}(b) &=\phi(b)t^{0} \text{on } \: V, \\ 
        D(\overline{\textbf{r}},\textbf{r})b &= \psi(b)D(\overline{\textbf{r}},\textbf{r}) \; \text{on}\; V, \; \text{for} \; \textbf{r}\neq  \textbf{0} \\
       D(u,\textbf{0})b\cdot v &= (f(u,b)+(u,\textbf{r})\psi(b))v, \; \text{for} \: v\in V_{\Lambda +\textbf{r}}, \textbf{r}\in \mathbb{Z}^{N}.
    \end{align*}
    Also, the action of $A_{N}\otimes B$ is quasi-associative on $V$ in the following sense :
    \begin{equation*}
        t^{\textbf{m}}(b)t^{\textbf{n}}(b') = \begin{cases}
            t^{\textbf{m}+\textbf{n}}(bb')= \psi(bb')\lambda t^{\textbf{m}+\textbf{n}}, \; \; \text{if} \; \textbf{m},\textbf{n}, \textbf{m}+\textbf{n}\in \mathbb{Z}^{N}\setminus \{0\}, \\
            \psi(bb')\mu t^{0}, \: \text{if} \:  \textbf{m}, \textbf{n} \in \mathbb{Z}^{N}\setminus \{0\}, \textbf{m}+\textbf{n}=\textbf{0}, \\
            \psi(b)\phi(b')ct^{\textbf{m}}, \: \text{if}
 \: \textbf{m}\neq \textbf{0}, \textbf{n}=\textbf{0}, \\
 \phi(b)\phi(b')ct^{\textbf{0}}, \:  \text{if} \: \textbf{m}=\textbf{n}=\textbf{0}.\end{cases}
  \end{equation*}
    Furthermore, for any $0\neq v\in V$, we have \begin{equation}\mathcal{U}((\mathcal{\tilde{H}}_{N}\ltimes A_{N})\otimes B )\cdot v = \mathcal{U}({\mathcal{\tilde{H}}}_{N}\ltimes A_{N})\cdot v .\end{equation}
    Therefore, $V$ is an irreducible uniformly bounded module Harish-Chandra module for $\mathcal{\tilde{H}}_{N}\ltimes A_{N}$. In particular, $V$ is an irreducible jet module for the Lie algebra $\mathcal{\tilde{H}}_{N}$.
\end{theorem}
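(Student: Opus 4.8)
The plan is to collect the structural results established across Section~4 and verify that together they yield every assertion. First I would settle the weight-space decomposition and injectivity. Since $A_N'$ acts nontrivially by hypothesis, the dichotomy of Proposition~4.4 combined with Lemma~4.5 rules out the nilpotent alternative (which would force $A_N'\cdot V=0$), so each $t^{\mathbf{r}}$ with $\mathbf{r}\neq\mathbf{0}$ acts injectively, while $t^{\mathbf{0}}$ acts by the nonzero scalar $c$. Injectivity gives $t^{\mathbf{r}}V_\Lambda=V_{\Lambda+\mathbf{r}}$, hence $\dim V_{\Lambda+\mathbf{r}}$ is independent of $\mathbf{r}$, and together with irreducibility this produces the decomposition $V=\bigoplus_{\mathbf{r}\in\mathbb{Z}^N}V_{\Lambda+\mathbf{r}}$, the identification $V\cong V_\Lambda\otimes A_N$, and the weight set $P(V)=\lambda+\sum_i\mathbb{Z}\delta_i$. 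The algebra homomorphism $\psi$ with $t^{\mathbf{r}}b=\psi(b)t^{\mathbf{r}}$ for $\mathbf{r}\neq\mathbf{0}$ is exactly Proposition~4.6, the linear map $\phi$ with $t^{\mathbf{0}}b=c\phi(b)$ comes from the centrality of $t^{\mathbf{0}}\otimes B$ recorded at the start of the section, and the full multiplication table for $A_N\otimes B$ is the quasi-associative relation displayed after Proposition~4.6.

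Next I would record the action of the Hamiltonian part. Lemma~4.10 gives $D(\overline{\mathbf{r}},\mathbf{r})b=\psi(b)D(\overline{\mathbf{r}},\mathbf{r})$ on all of $V$ for $\mathbf{r}\neq\mathbf{0}$. For the degree-zero elements, the Schur-type argument preceding Lemma~4.10 --- which uses the irreducible $D$-module structure on $V_\Lambda$ of Lemma~4.7 together with the vanishing $\lambda(\mathbf{r},b)=0$ --- shows that $D(u,\mathbf{0})b$ acts on $V_\Lambda$ by a scalar $f(u,b)$, bilinear in $(u,b)$ with $f(u,1)=(u,\Lambda)$. Writing $v\in V_{\Lambda+\mathbf{r}}$ as $v=t^{\mathbf{r}}w$ with $w\in V_\Lambda$ and using $[D(u,\mathbf{0})b,t^{\mathbf{r}}]=(u,\mathbf{r})t^{\mathbf{r}}b=\psi(b)(u,\mathbf{r})t^{\mathbf{r}}$ then yields the stated formula $D(u,\mathbf{0})b\cdot v=(f(u,b)+(u,\mathbf{r})\psi(b))v$.

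Finally I would deduce the reduction to the underlying Lie algebra. For a graded vector this is Proposition~4.11, and the identity $\mathcal{U}(\tilde{\mathcal{H}}_N\ltimes A_N)\cdot v=V$ follows from irreducibility. To pass to an arbitrary $v=v_1+v_2$ supported on two distinct weights I would run the argument of Lemma~3.16 verbatim: every nonzero-degree generator $D(\overline{\mathbf{r}},\mathbf{r})b$ or $t^{\mathbf{r}}b$ already equals $\psi(b)$ times its untwisted version, $t^{\mathbf{0}}b$ acts by the scalar $c\phi(b)$, and the only genuinely mixed term, $D(u,\mathbf{0})b$, sends $v_1+v_2$ to $\psi(b)D(u,\mathbf{0})(v_1+v_2)$ plus a scalar multiple of $v_1+v_2$, hence stays in $\mathcal{U}(\tilde{\mathcal{H}}_N\ltimes A_N)\cdot v$; peeling off $B$-factors one at a time and invoking PBW gives the enveloping-algebra identity asserted in the statement. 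This exhibits $V$ as an irreducible uniformly bounded module over $\tilde{\mathcal{H}}_N\ltimes A_N$ on which $A_N$ acts nontrivially and quasi-associatively, i.e. an object of category $\mathcal{J}$, so the classification of Talboom (Theorem~4.1) applies and $V$ is an irreducible jet module for $\tilde{\mathcal{H}}_N$.

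The one step requiring genuine care, rather than mechanical citation, is this degree-zero reduction: one must check that applying $D(u,\mathbf{0})b$ to a vector spread over several weights does not leak outside $\mathcal{U}(\tilde{\mathcal{H}}_N\ltimes A_N)\cdot v$. The resolution is that $f(u,b)-(u,\Lambda)$ is a constant independent of the weight, so the $B$-twisted degree-zero action collapses into a scalar correction plus the untwisted $D(u,\mathbf{0})$-action, and the induction closes; everything else is a direct assembly of Propositions~4.4, 4.6, 4.11 and Lemmas~4.9, 4.10.
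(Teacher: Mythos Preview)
Your proposal is correct and mirrors the paper's own approach: the theorem is a summary statement assembled from the results of Section~4 (Propositions~4.4, 4.6, 4.11 and Lemmas~4.5, 4.7, 4.9, 4.10, together with the quasi-associativity relation and the Lemma~3.16-style argument for non-graded vectors), and the paper treats it as such, stating it without a separate proof block. Your identification of the degree-zero reduction as the one step needing care is apt and handled exactly as in the paper's Lemma~3.16.
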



\end{document}